\newcounter{citedtheorems}
\newcounter{theoremcounter}
\newtheorem{defn}[theoremcounter]{Definition}
\newtheorem{theorem}[theoremcounter]{Theorem}
\newtheorem*{theorem-m}{Theorem \ref{main-theorem}}
\newtheorem*{theorem-x}{Theorem}
\newtheorem{main-claim}[theoremcounter]{Main Claim}
\newtheorem{thm-lit}[citedtheorems]{Theorem}
\newtheorem{defn-lit}[citedtheorems]{Definition}
\newtheorem{fact-lit}[citedtheorems]{Fact}
\newtheorem{fact}[theoremcounter]{Fact}
\newtheorem{cor}[theoremcounter]{Corollary}
\newtheorem{defn-claim}[theoremcounter]{Definition/Claim}
\newtheorem{propn}[theoremcounter]{Proposition}
\newtheorem{concl}[theoremcounter]{Conclusion}
\newtheorem{conv}[theoremcounter]{Convention}
\newtheorem{claim}[theoremcounter]{Claim}
\newtheorem{lemma}[theoremcounter]{Lemma}
\newtheorem{rmk}[theoremcounter]{Remark}
\newtheorem{ntn}[theoremcounter]{Notation}
\newtheorem{example}[theoremcounter]{Example}
\newtheorem{qst}[theoremcounter]{Question}
\newcommand{\trv}{\mathbf{t}}
\newcommand{\mch}{\mathcal{H}}
\newcommand{\eps}{\epsilon}
\newcommand{\trg}{T_{\operatorname{rg}}}
\newcommand{\tdim}{\operatorname{Thr}}
\newcommand{\thr}{\operatorname{Thr}}
\newcommand{\ldim}{\operatorname{Ldim}}
\newcommand{\tlfs}{\triangleleft}
\newcommand{\tlf}{\trianglelefteq}
\newcommand{\br}{\vspace{3mm}}
\newcommand{\sbr}{\vspace{1mm}}
\newcommand{\gee}{\mathcal{G}}
\newcommand{\dom}{\operatorname{dom}}
\newcommand{\lgn}{\operatorname{lg}}
\newcommand{\rstr}{\upharpoonright}
\newcommand{\dual}{\operatorname{dual}}
\newcommand{\vc}{\operatorname{VC}}
\newcommand{\bfs}{\mathbf{s}}
\newcommand{\mbh}{\mathbf{H}}
\newcommand{\mcg}{\mathcal{G}}
\newcommand{\Maj}{\operatorname{Maj}}
\newcommand{\kr}{{\operatorname{k-real}}}
\newcommand{\Cdr}{{\operatorname{Cd-real}}}
\newcommand{\cdr}{{\operatorname{cd-real}}}
\newcommand{\fr}{{\operatorname{(<\omega)-real}}}
\newcommand{\noteom}[1]{{\color{teal}#1}}
\numberwithin{equation}{section}
\numberwithin{theoremcounter}{section}
\title[$\epsilon$-saturation for stable and Littlestone]{$\epsilon$-Saturation for stable graphs \\ and Littlestone classes}
\author{Maryanthe Malliaris, Olga Medrano Mart\'{i}n del Campo, Shay Moran}
\address{Department of Mathematics, University of Chicago} 
\email{mem@math.uchicago.edu}
\thanks{MM: Research partially supported by NSF-BSF 2051825.}
\thanks{OMMC: Research partially supported by Fundaci\'on Alberto y Dolores de Andrade, I.A.P}
\thanks{SM: Shay Moran is a Robert J. Shillman Fellow; he acknowledges support by ISF grant 1225/20, by BSF grant 2018385, by an Azrieli Faculty Fellowship, by Israel PBC-VATAT, by the Technion Center for Machine Learning and Intelligent Systems (MLIS), and by the the European Union (ERC, GENERALIZATION, 101039692). Views and opinions expressed are however those of the author(s) only and do not necessarily reflect those of the European Union or the European Research CouncilExecutive Agency. Neither the European Union nor the granting authority can be held responsible for them.}
\address{Department of Mathematics, University of Chicago}
\email{omedranomdelc@uchicago.edu}
\address{Departments of Mathematics, Computer Science, and Data and Decision Sciences, Technion and Google Research}
\email{smoran@technion.ac.il}
\begin{document}

\begin{abstract}
Any Littlestone class, or stable graph, has finite sets which function as ``virtual elements'':  
these can be seen from the learning side as 
representing hypotheses which are expressible as weighted 
majority opinions of hypotheses in the class, 
and from the model-theoretic side as an approximate finitary version of realizing types. 
We introduce and study the \emph{epsilon-saturation} of a Littlestone class, or stable graph, 
which is essentially the closure of the class under inductively adding all such virtual elements. 
We characterize this closure and prove that under reasonable choices of parameters, 
it remains Littlestone (or stable), though not always of the same Littlestone dimension. This highlights some 
surprising phenomena having to do with regimes of 
epsilon and the relation between Littlestone/stability and VC dimension. 
\end{abstract}

\setcounter{tocdepth}{1}

\maketitle

\section{Introduction}

In infinite model theory, models give rise to types, which describe elements finitely
approximated in the model but possibly not realized there.  In learning theory,
some learning algorithms (called improper learners) necessarily choose hypotheses outside
the given class, which in various examples arise naturally from notions of
dimension or of majority.  Motivated jointly by these ideas 
we develop a quantitative notion of ``closure under aggregation''
which we call $\epsilon$-saturation.  

These definitions make sense for any graph or hypothesis class a priori. 
However, a characteristic of Littlestone, or stable, classes is that every finite 
subset of elements or hypotheses contains a  
``strongly opinionated aggregation'' ($\epsilon$-good or $\epsilon$-excellent set) 
of linear size.  So this is our central case.  
Let $\ell := \ldim(\mch) \geq d := \vc(\mch)$ (defined in \ref{d:littlestone}, \ref{d:vc} below).
We define and characterize the $\epsilon$-saturation $\mch_{\infty}(\epsilon)$
of a Littlestone class $\mch$ for any $\epsilon < 1/2$, and
we prove that $\epsilon$ modulates the complexity of $\mch_\infty(\epsilon)$ in
a strong way.
When $\epsilon < 1/(2^{\ell+2})$,
$\ldim(\mch_{\infty}(\epsilon)) = \ldim(\mch) = \ell$.
For some numerical constant $C < 20$,
if $\epsilon < 1/(Cd)$, $\mch_{\infty}(\epsilon)$
has possibly larger, but finite, Littlestone dimension.
When $ 1/(d+1) < \epsilon < 1/2$, $\mch_{\infty}(\epsilon)$ may have unbounded VC dimension.
We then develop a parallel theory for stable graphs with the more powerful notion of
$\epsilon$-excellent replacing $\epsilon$-good.

The proofs involve theorems and methods from combinatorics, statistical learning, logic, Stable Regularity, and Littlestone Minimax.

\tableofcontents

\section{Notation} 

We will deal with several closely related kinds of objects: graphs, bipartite graphs, 
and hypothesis classes. These need not necessarily be finite, though 
some constructions of the paper ask that they are at most countable. 
To fix notation, 
\begin{itemize}
\item a graph $G = (V,E)$ has a vertex set and an edge set, 
$|G|$ means the size of the vertex set, and 
$E$ is symmetric and irreflexive (i.e., edges are not directed and there are no loops from a 
vertex to itself). 
\item we may write ``$a \in G$'' for $a \in V(G)$ or ``$A \subseteq G$'' for $A \subseteq V(G)$.  
\item for edges and nonedges, we may write ``$a \sim b$'' to abbreviate $(a,b) \in E$ 
and ``$a \not\sim b$'' to abbreviate $(a,b) \notin E$. 
\item a bipartite graph $G = (V_1 \cup V_2, E)$ is given with a partition of the vertex set, and 
here $E \subseteq V_1 \times V_2$ is neither symmetric nor irreflexive. 
\item a hypothesis class can be denoted either $\mch$ or $(X, \mch)$,  
where $X$ is a set and $\mch$ is a set of subsets of $X$. It is also usual to identify 
sets $h \in \mch$ with their characteristic functions, writing $h(a) = 1$ for 
$a \in h$ and $h(a) = 0$ for $a \notin h$. 
\item a hypothesis class $\mch$ can be seen as a bipartite graph in the natural way, letting 
$X$ and $\mch$ be the two sides of the partition of the vertex set and putting an edge 
between $a \in X$ and $h \in \mch$ if and only if $h(a) = 1$. 
\end{itemize}
Notation used locally in sections of the paper will be introduced as needed. 

\section{Preliminaries: good, excellent, stable, Littlestone}\label{s:preliminaries}

In this section we briefly review and motivate good and excellent sets  
 and stable graphs 
following \cite{MiSh:978}, which gives one motivation for our work, and the 
parallel ideas in Littlestone classes.  
For further details, see \cite{MiSh:978},  
\cite{MiSh:E98}, \cite{Terry-Wolf}, \cite{Malliaris-Moran}. \emph{Note: in the full generality of 
the paper below, we will allow weighted good and excellent sets; but for this introductory 
section, the main ideas are communicated without this extra notation.}   
The reader familiar with the area may refer back to this section as needed, or may look ahead to 
\S \ref{s:examples1} for examples relevant to the paper. 

Good sets and excellent sets were introduced by Malliaris and Shelah in \cite{MiSh:978} 
in order to prove the stable regularity lemma, which is a strong version of Szemer\'edi regularity 
which holds for stable graphs (see below).  
In this introduction, good and excellent sets are always finite, although the graph $G$ 
need not be. Informally, 
good sets are almost-atoms: 

\begin{defn}[Good] Given a graph $G$, $\epsilon > 0$, and a finite $B \subseteq G$,  
\[ B \mbox{ is $\epsilon$-good } \]
if for any $a \in G$, either $\{ b \in B: a \sim b \}$ or 
$\{ b \in B:  a \not\sim b \}$ has size $<\epsilon|B|$.  
In the first case,\footnote{Here ``$\trv$'' is for ``truth value,'' expressing the majority opinion.} 
write $\trv(a,B) = 1$, and in the second, $\trv(a,B) = 0$.  
\end{defn}

For any $\epsilon$-good set $B \subseteq G$ and any $a \in G$, $B$ behaves like a 
`virtual' element in the sense that a `virtual' edge or nonedge to $a$ is well defined\footnote{The 
astute reader may wonder about symmetry in the inputs of $\trv$, which we address presently.}  
(according to $\trv(a,B)$). The following definition, also from \cite{MiSh:978}, 
allows for a similar conclusion 
between virtual elements.\footnote{If $A$ and $B$ are $\epsilon$-good, they are already 
in some sense close 
to being a regular pair, at the cost of changing $\epsilon$; as we prefer to keep 
$\epsilon$ fixed, excellence allows for a stronger conclusion.}

\begin{defn}[Excellent]
Given a graph $G$, $\epsilon >0$, $\delta > 0$, and a finite $A \subseteq G$,  
\[ A \mbox{ is $(\epsilon, \delta)$-excellent } \]
if for every $\delta$-good $B \subseteq G$, either 
$\{ a \in A : \trv(a,B) = 0 \}$ or $\{ a \in A : \trv(a,B) = 1 \}$ has size $<\epsilon|A|$.  
In the first case, write $\trv(A, B) = 1$, and in the second, $\trv(A,B) = 0$. 
When $\epsilon = \delta$, write simply  
\[ A \mbox{ is $\epsilon$-excellent.}  \]
\end{defn} 

Excellent sets are good, but the reverse need not be true: see 
\S \ref{example:good-excellent} below. 
The existence of good and excellent sets is characteristic of stable graphs, 
so-called because of stability in model theory, an important concept in 
Shelah's classification theory \cite{Sh:a}. To explain this, first recall the connection 
of various combinatorial definitions:

\begin{defn}[Stable] \label{d:k-stable} A graph $G$ contains a \emph{$k$-half graph} if 
there exist disjoint sets of vertices 
\[ \mbox{ $\{ a_1, \dots, a_k\} $,
$\{ b_1, \dots, b_k \}$ so that $a_i \sim b_j$ if $i<j$ and $a_i \not\sim b_j$ if $j \geq i$} \]
$G$ is called \mbox{$k$-stable}, or sometimes 
\emph{$k$-edge stable} for emphasis, if it contains no $k$-half 
graphs.\footnote{Not all half-graph edges are specified, so $k$-stable forbids a family of configurations.} 
\end{defn} 

\begin{conv}
We say ``$G$ is a stable graph'' to mean that $G$ is $k$-stable for some finite $k$.  
\end{conv}

The next definition is also a finite version of a configuration from 
Shelah's classification theory,  
used to sound out the complexity of a formula [such as the edge relation $E(x,y)$] 
by asking whether the family of instances of the formula [such as the set of neighborhoods] 
allowed for a certain kind of repeated partitioning.  
We say ``special tree'' after \cite{MiSh:978}, others say Shelah tree or Hodges tree.\footnote{The motivation for these names will become evident shortly. Recall that in set theoretic notation, $2 = \{ 0, 1 \}$, 
so ${^{m>}2}$ is the set of binary sequences of length strictly less than $m$, 
${^m 2}$ is the set of binary sequences of length exactly $m$, and 
$\eta^\smallfrown \langle t \rangle \tlf \nu$ means that the string $\eta$ followed by 
$t$ is an initial segment of $\nu$.}   

\begin{defn}[Special trees] \label{d:special-trees}
A graph $G$ contains a \emph{special tree of height $m$}
if there exist disjoint sets of vertices 
$\{ a_\eta : \eta \in {^{m>}2} \}$, $\{ b_\nu : \nu \in {^m 2} \} \subseteq G$,
so that:
\begin{itemize}
\item if $\eta^{\smallfrown} \langle 0 \rangle \tlf \nu$, then $a_\eta \not\sim b_\nu$
\item if $\eta^{\smallfrown} \langle 1 \rangle \tlf \nu$, then $a_\eta \sim b_\nu$
\end{itemize}  
$($and there are no constraints on other edges$)$.
\end{defn}

In Shelah's classification theory, one of the consequences of his celebrated Unstable Formula Theorem (\cite{Sh:a}, Theorem II.2.2, circa 1978) is that special trees and half-graphs are either both bounded or both unbounded.\footnote{That is, in the 
class of models of a complete first-order theory, in 
the bipartite graph given by fixing a first-order formula $\varphi(\bar{x};\bar{y})$ and  
connecting two tuples of the correct length precisely when $\varphi$ holds, 
special trees and half-graphs are either both bounded or both unbounded. For a discussion 
of Shelah's unstable formula theorem and learning theory, see \cite{Malliaris-Moran2}.} 
Around 1981, Hodges looked for a quantitative version, and 
proved (in our language) that there is  
a log bound in each direction. In the next Fact, (1) is essentially sharp but (2) is not known to be. Given 
its role in our argument below, we sketch a proof. 

\begin{fact}[Hodges-Shelah lemma, cf. \cite{hodges}] \label{hs-lemma}
In a graph or bipartite graph $G$: 
\begin{enumerate}
\item if there is a $2^{m+1}$-half graph, there is an $m$-tree 
\item if there is a $2^{k+2}-2$-special tree, there is a $k$-half graph.  
\end{enumerate}
\end{fact}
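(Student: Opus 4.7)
For (1), I would argue by induction on $m$, using the standard median-splitting construction. Given a $2^{m+1}$-half graph with $a_i \sim b_j$ iff $i<j$, set $a_\emptyset := a_{2^m}$; then $a_\emptyset$ has nonedges to the first $2^m$ of the $b$'s and edges to the last $2^m$. The ``left half,'' consisting of the $a$'s indexed below $2^m$ together with the $b$'s indexed up to $2^m$, inherits the total order and forms a half-graph on a $(2^m-1,2^m)$-sided ground set; a very mild strengthening of the inductive hypothesis (or equivalently, a small bookkeeping argument) lets us extract an $(m-1)$-special tree from it whose $b$-leaves are all nonadjacent to $a_\emptyset$ by construction. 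Symmetrically, the right half provides the right subtree. Placing $a_\emptyset$ at the root then gives the desired $m$-tree, and the inductive hypothesis can be used at the comfortable level $2^m$.

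For (2), the strategy is subtler. I would first pick a path from the root down to a leaf $\nu^* \in 2^N$ (where $N := 2^{k+2}-2$) and set $\eta^{(l)} := \nu^* \rstr l$ for $l < N$; the $a$'s $a_{\eta^{(0)}}, \dots, a_{\eta^{(N-1)}}$ along this path form a chain. For each $l$, let $\mu^{(l)}$ be any leaf in the ``sibling'' subtree at level $l$, i.e., any extension whose $(l+1)$-st bit is $1 - \nu^*(l)$. A direct application of Definition~\ref{d:special-trees} shows that the adjacency of $a_{\eta^{(l)}}$ to $b_{\mu^{(l')}}$ is fully determined whenever $l \leq l'$, with a triangular pattern whose signs are controlled by the bits $\nu^*(l)$. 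Pigeonholing on these bits (losing a factor of $2$) restricts attention to a sub-chain of length $\geq N/2$ on which the determined portion of the incidence matrix is exactly one ``triangle'' of a half-graph.

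The main obstacle is closing the complementary triangle, i.e., the pairs $(l,l')$ with $l' < l$ for which the special-tree structure yields no information. A direct Ramsey argument on these pairs would finish, but only with an inferior $2^{2k}$-type bound. To reach the claimed bound $2^{k+2}-2$ the plan is an inductive recurrence of the form $f(k) \leq 2f(k-1) + 2$, which telescopes via $f(k) + 2 \leq 2(f(k-1) + 2)$ starting from a mild base case. At each step the structured partial pattern above is combined with a recursive call in an appropriate subtree to produce a $(k-1)$-half-graph, and the root vertex of the outer construction supplies the additional element on one side. The delicate point I expect to wrestle with is the bookkeeping: ensuring that the recursive call operates on a subtree whose height is exactly $f(k-1)$ and that the new vertex inserts consistently at the right end of the resulting order, so that the constant $2$ (rather than something larger) suffices at each step.
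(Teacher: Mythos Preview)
Your argument for (1) is essentially the paper's: place the median $a$ at the root and recurse on the two halves. That part is fine.

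For (2), your proposal has a genuine gap. The path-based construction you describe is correct as far as it goes, and you correctly identify that it alone yields only a $2^{2k}$-type bound. The problem is your recursive plan. You propose to place the root $a_\emptyset$ as the new element on the $a$-side and recurse in a subtree; say you recurse in the left subtree, so $a_\emptyset$ has nonedges to all of the recursively chosen $b$'s. But you still owe a partner $b$ for $a_\emptyset$, and that $b$ must have the correct adjacency to \emph{every} recursively chosen $a$. Those recursive $a$'s are internal nodes of the left subtree; a leaf $b$ unrelated to the recursive construction has determined adjacency to such an $a$ only when it lies below it, which you cannot arrange simultaneously for all of them. Your ``structured partial pattern'' from the path does not help here: that pattern controls adjacencies between path nodes and sibling-subtree leaves, whereas the recursion lives entirely inside one subtree.

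The paper resolves this by reversing the order of choices. First fix an arbitrary leaf $b$; it $2$-colors all internal nodes by adjacency. A tree Ramsey lemma (any $2$-coloring of the internal nodes of a height-$m$ special tree contains a monochromatic subtree of height roughly $m/2$) then yields a large subtree on which $b$'s adjacency to every internal node is constant. Take the root of that subtree as the new $a$, pair it with $b$, and recurse in the appropriate child subtree (left or right depending on the color, so that $a$'s adjacency to all future leaves is also correct). This is what produces the recurrence $f(k)\le 2f(k-1)+2$: one factor of $2$ from Ramsey, plus one level lost passing to a child. The tree Ramsey step is the missing idea in your plan; without it, there is no mechanism to control the chosen $b$'s adjacency to the $a$'s produced by the recursion.
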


\begin{proof}[Proof Sketch]
First, from a $k$-half graph $a_1,\dots, a_k, b_1,\dots,b_k$, we find 
a tree of height about $\log k$.  We will let the $b$'s be the leaves of our tree and 
choose $a$'s for the internal nodes. For the root $a_\emptyset$, we choose the midpoint 
of the $a$'s, call it $a_{\lceil k/2 \rceil}$, noting that this element connects to the 
right half of the $b$'s and not to the left half, as befits the root of the tree. 
Continue inductively, choosing the $2^\ell$ internal nodes needed for level $\ell$ 
as the midpoints of the segments with endpoints $\lceil ki/2^\ell \rceil$. 

From an $m$-tree $A = \{ a_\eta : \eta \in {^{m>}2} \}$, 
$B = \{ b_\nu : \nu \in {^m 2} \}$ we can obtain a half-graph of length about $\log m$ 
by induction. There are two preliminary steps.  
First, by induction on $\ell$, we define what it means to be a subtree of height $\ell$. 
A subtree of height $0$ is a leaf, and a subtree of height $\ell+1$ consists of an 
internal node $a_\eta$ which has two distinct extensions $a_{\rho}$ and $a_{\nu}$, 
which are each the root of a subtree of height $\ell$ (here $\eta$ is 
the common initial segment of $\rho$ and $\nu$, but $\rho$ and $\nu$ need not have the same 
length). Second, one proves a Ramsey lemma saying that if we 2-color the internal nodes of 
an $m$-tree then there is a monochromatic subtree of height about $m/2$.  

Now let us choose a first pair of elements for our half-graph. (We will either be choosing 
$a_1, b_1$ or $a_k, b_k$.) Start by choosing any $b \in B$, which induces a 2-coloring of $A$ according to whether or not $b \sim a$. By the Ramsey lemma we can find a large monochromatic subtree. 
Call it $A_*$, and suppose that $b \sim a$ for all $a \in A_*$. In this case our 
$b$ will become $b_k$. Define $a_k$ to be the root of the tree $A_*$. Let $A_{**}$ be the 
subtree whose root is the left successor of $a_k$ in $A_*$, and let $B_{**}$ be its 
set of leaves. Now we have found $a_k, b_k$, and we have a slightly smaller tree 
$A_{**}, B_{**}$ with the property that $b_k \sim a$ for all $a \in A_{**}$, and 
$a_k \not\sim b$ for all $b \in B_{**}$. Note also that had our Ramsey lemma returned 
$\not\sim$, we would have set $b_1 = b$, $a_1$ to be the root of $A_*$,  
$A_{**}$ to be the subtree whose root is the right successor of $a_1$ in $A_*$, 
and $B_{**}$ its leaves . 
We would then have found $a_1, b_1$ and a slightly smaller tree $A_{**}, B_{**}$ with the 
property that $b_1 \not\sim a$ for all $a \in A_{**}$, and $a_1 \sim b$ for all $b \in B_{**}$.    
Either way, we can clearly focus on $A_{**}, B_{**}$ and repeat. 
\end{proof}

\begin{concl} \label{c:stable-orders-trees}
If $G$ is a $k$-stable graph, hence has no $k$-half graphs, 
there is $m=m(k) \leq 2^{k+2}-2$ such that $G$ has no $m$-special trees. 
\end{concl}

This connection of orders and trees was leveraged by Malliaris and Shelah 
in the stable regularity lemma 
to prove existence of good and excellent sets. These proofs 
will be central to our work in this paper so we review them here. 
Since sets of size one are good,  
of course \ref{l:excellent} implies \ref{c:good} and the inclusion of both proofs 
is pedagogical; also, if the $X$ with which we begin 
is too small, say $|X| < 2^m$, we can choose any singleton and finish.

\begin{claim}[Good sets in stable graphs, \cite{MiSh:978}] \label{c:good} 
Let $G$ be a $k$-stable graph, and let $m = m(k)$ be from 
$\ref{c:stable-orders-trees}$. Then for any $0 < \epsilon < \frac{1}{2}$, and any finite 
$X \subseteq G$, there is $Y \subseteq X$, $|Y| \geq \epsilon^{m}|X|$ 
such that $Y$ is $\epsilon$-good.   
\end{claim}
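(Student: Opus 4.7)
The plan is to attempt to build a special tree of height $m$ inside $G$ by iteratively splitting $X$. The key observation is that the failure of this process at a given stage is, by definition, exactly what it means for the current piece to be $\epsilon$-good; so either the procedure forces us to output a large $\epsilon$-good subset, or else it produces a special tree deep enough to contradict $k$-stability via Conclusion \ref{c:stable-orders-trees}.

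Concretely, I would set $X_\emptyset := X$ and proceed inductively. Given $X_\eta$ for $\eta \in {^{m>}2}$, ask whether there exists $a \in G$ such that both
\[ |\{ b \in X_\eta : a \sim b \}| \geq \epsilon |X_\eta| \quad \text{and} \quad |\{ b \in X_\eta : a \not\sim b \}| \geq \epsilon |X_\eta|. \]
If such an $a$ exists, name one and call it $a_\eta$, and let $X_{\eta^\smallfrown \langle 0 \rangle}$, $X_{\eta^\smallfrown \langle 1 \rangle}$ be the non-neighbors and neighbors of $a_\eta$ in $X_\eta$ respectively; both have size at least $\epsilon |X_\eta|$. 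If no such $a$ exists then, unwinding the definition, $X_\eta$ is $\epsilon$-good, and I output $Y := X_\eta$.

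If the procedure halts at some $\eta$ of length $j < m$, iterating the shrinkage gives $|Y| \geq \epsilon^j |X| \geq \epsilon^m |X|$ since $\epsilon < 1$, as desired. If the procedure runs all the way to depth $m$ without halting, split into two sub-cases. When $|X| < (1/\epsilon)^m$, any singleton $Y \subseteq X$ is $\epsilon$-good and $|Y| = 1 \geq \epsilon^m |X|$, which is the shortcut flagged in the preamble. Otherwise $|X| \geq (1/\epsilon)^m$, so each $X_\nu$ for $\nu \in {^m 2}$ has size at least $\epsilon^m |X| \geq 1$ and is nonempty; choose $b_\nu \in X_\nu$. Since $X_\nu \subseteq X_{\eta^\smallfrown \langle t \rangle}$ whenever $\eta^\smallfrown \langle t \rangle \tlf \nu$, the construction guarantees $a_\eta \sim b_\nu$ when $\eta^\smallfrown \langle 1 \rangle \tlf \nu$ and $a_\eta \not\sim b_\nu$ when $\eta^\smallfrown \langle 0 \rangle \tlf \nu$, so the $a_\eta$ and $b_\nu$ together form a special tree of height $m$ in $G$, contradicting Conclusion \ref{c:stable-orders-trees}.

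The main obstacle is not really conceptual but bookkeeping: verifying that the failure of the splitting condition on $X_\eta$ translates exactly to ``$X_\eta$ is $\epsilon$-good,'' and matching the depth of the would-be tree against the desired exponent $m$ in the bound $\epsilon^m |X|$, including the degenerate small-$|X|$ regime where the singleton shortcut must be invoked to keep the dichotomy clean.
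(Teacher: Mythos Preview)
Your proof is correct and follows essentially the same approach as the paper: build a special tree by iterated splitting of $X$, with failure to split meaning $\epsilon$-good, and reaching depth $m$ contradicting Conclusion~\ref{c:stable-orders-trees}. The only cosmetic difference is that the paper dispatches the small-$|X|$ case in a remark before the statement rather than as a sub-case inside the proof; in fact your sub-case split is not strictly needed, since whenever the procedure reaches depth $m$ each leaf $X_\nu$ is automatically nonempty (each split produces integer-sized pieces of positive size), so the contradiction fires regardless of $|X|$.
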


\begin{proof} 
We proceed by induction\footnote{If $X$ is not $\epsilon$-good, 
letting $a_\emptyset \in G$ be such that
$B_{\langle 0 \rangle} := \{ b \in X : b \not\sim a_\emptyset \}$ and
$B_{\langle 1 \rangle} := \{ b \in X : b \sim a_\emptyset \}$ are both nonempty of
size $\geq \epsilon |X|$ starts building a tree.}
 on $\ell < m$. Let $X_\emptyset = X$. 
For each $\eta \in {^\ell 2}$,  
if $X_{\eta}$ is not $\epsilon$-good, let $a_{\eta} \in G$ 
be such that 
$X_{\eta ^\smallfrown \langle 0 \rangle} := \{ x \in X_{\eta} : x \not\sim a_\eta \}$ and  
$X_{\eta ^\smallfrown \langle 1 \rangle} := \{ x \in X_{\eta} : x \sim a_\eta \}$         
are both nonempty of size $\geq \epsilon |X_{\eta}|$. 
Suppose the construction continues through stage $m-1$ so we have defined 
$\{ X_\rho : \rho \in {^{m} 2} \}$ (which are by construction disjoint) 
and $\{ a_\nu : \nu \in {^{m>}2} \}$. 
By hypothesis each $X_\rho \neq \emptyset$, so we may choose 
$x_\rho \in X_\rho$ for each $\rho \in {^{m}2}$. Then 
$\{ a_\nu : \nu \in {^{m>}2} \}$, $\{ x_\rho : \rho \in {^{m}2} \}$ 
form an $m$-tree, contradiction.  So one of the $X_\eta$'s (for 
$\eta$ of length $\leq m-1$) must have been 
$\epsilon$-good, and its size will be $\geq \epsilon^m|X|$.  
\end{proof} 

\begin{lemma}[Excellent sets in stable graphs, \cite{MiSh:978}] \label{l:excellent}
Let $G$ be a $k$-stable graph, and let $m = m(k)$ be from
$\ref{c:stable-orders-trees}$. Then for any $0 < \epsilon < \frac{1}{2^m}$, and any finite
$X \subseteq G$, there is $Y \subseteq X$, $|Y| \geq \epsilon^{m}|X|$
such that $Y$ is $\epsilon$-excellent.
\end{lemma}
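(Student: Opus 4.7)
The plan is to mimic the tree-building argument of Claim \ref{c:good}, but branch on $\epsilon$-good witnesses to failure of excellence rather than on individual vertices. Set $X_\emptyset := X$. Inductively, for each $\eta \in {^{\ell}2}$ with $\ell < m$: if $X_\eta$ is $\epsilon$-excellent, return $Y := X_\eta$, which has size $\geq \epsilon^\ell |X| \geq \epsilon^m|X|$. Otherwise, by the definition of $\epsilon$-excellence there is an $\epsilon$-good $B_\eta \subseteq G$ such that both
$X_{\eta^\smallfrown \langle 0 \rangle} := \{a \in X_\eta : \trv(a, B_\eta) = 0\}$ and
$X_{\eta^\smallfrown \langle 1 \rangle} := \{a \in X_\eta : \trv(a, B_\eta) = 1\}$
have size $\geq \epsilon|X_\eta|$; split along these and continue. (As in Claim \ref{c:good}, if $|X|$ is too small from the start, a singleton is automatically $\epsilon$-excellent and finishes the argument.)

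If the induction reaches stage $m$ without any $X_\eta$ being $\epsilon$-excellent, we have nonempty $\{X_\rho : \rho \in {^m 2}\}$ and $\epsilon$-good witnesses $\{B_\eta : \eta \in {^{m>}2}\}$. I will build from this an $m$-special tree in $G$ to contradict Conclusion \ref{c:stable-orders-trees}. Choose any $b_\rho \in X_\rho$ for each leaf $\rho$. For each internal node $\eta$, I want to pick $a_\eta \in B_\eta$ whose adjacency to every $b_\rho$ with $\eta \tlf \rho$ matches the majority opinion $\trv(b_\rho, B_\eta)$: since $b_\rho \in X_{\eta^\smallfrown \langle t \rangle}$ forces $\trv(b_\rho, B_\eta) = t$, this is exactly the edge/non-edge relation prescribed by the branch label in Definition \ref{d:special-trees}, and $\epsilon$-goodness of $B_\eta$ bounds the ``wrong'' subset of $B_\eta$ relative to each fixed $b_\rho$ by strictly less than $\epsilon|B_\eta|$.

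The quantitative heart of the argument is a union bound over the at most $2^m$ leaves $\rho$ extending $\eta$: the total wrong set inside $B_\eta$ has size strictly less than $2^m \epsilon|B_\eta| \leq |B_\eta|$, which holds precisely because $\epsilon < 1/2^m$. So an acceptable $a_\eta \in B_\eta$ exists, and $\{a_\eta : \eta \in {^{m>}2}\}, \{b_\rho : \rho \in {^m 2}\}$ is the desired $m$-special tree. The only place demanding care is reconciling the $\trv$ sign convention with the tree-edge convention of Definition \ref{d:special-trees}; modulo that, this is a direct adaptation of Claim \ref{c:good}, with the bound $\epsilon < 1/2^m$ paying exactly for the union bound needed to select single vertex-representatives from the $2^m$ many good witnesses.
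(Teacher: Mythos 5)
Your proposal is correct and follows essentially the same argument as the paper: build the splitting tree by induction using $\epsilon$-good witnesses to failure of excellence, select a leaf element $b_\rho \in X_\rho$ for each $\rho \in {^m 2}$, and then use the $\epsilon$-goodness of each witness $B_\eta$ together with a union bound over the $\leq 2^m$ leaves extending $\eta$ (which is exactly where $\epsilon < 1/2^m$ is used) to extract a genuine representative $a_\eta \in B_\eta$, yielding an $m$-special tree and a contradiction with Conclusion \ref{c:stable-orders-trees}. The only differences are notational (you write $B_\eta$ where the paper writes $A_\eta$, etc.), and your remark about reconciling the $\trv$ sign convention with Definition \ref{d:special-trees} is the right thing to flag.
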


\begin{proof} 
We also proceed by induction on $\ell < m$. Let $X_\emptyset = X$.
For each $\eta \in {^\ell 2}$,
if $X_{\eta}$ is not $\epsilon$-excellent, let $A_{\eta} \subseteq G$ be an $\eps$-good set
 such that
$X_{\eta ^\smallfrown \langle 0 \rangle} := \{ x \in X_{\eta} : \trv(x, A_\eta) = 0 \}$ and
$X_{\eta ^\smallfrown \langle 1 \rangle} := \{ x \in X_{\eta} : \trv(x, A_\eta) = 1\}$
are both nonempty of size $\geq \epsilon |X_{\eta}|$.
Suppose the construction continues through stage $m-1$ so we have defined
$\{ X_\rho : \rho \in {^{m} 2} \}$ (which are by construction disjoint) 
and $\{ A_\nu : \nu \in {^{m>}2} \}$.

We now have a `virtual' tree from which we would like to extract an `actual' special tree. 
Since $X_\rho \neq \emptyset$ for every $\rho\in^{m}2$, we first choose any 
$x_\rho \in X_\rho$ for each such $\rho$.  
For each $\nu \in {^{m>}2}$, let $\trv_\rho$ be such that $\trv(x_\rho, A_\nu) = \trv_\rho$. 
This can be considered as a virtual edge, and we want to find an actual element $a_\nu \in A_\nu$ 
which has an actual edge of the correct kind to $x_\rho$, 
for all $\rho$ extending $\nu$. More precisely,  
the set $\{ \rho \in {^m 2} : \nu \tlfs \rho \}$ has size $\leq 2^m$. For any such 
$\rho$, the set $\{ a \in A_\nu : \trv(x_\rho, a) \neq \trv(x_\rho, A_\nu) \}$ of incorrect 
elements has size has size $<\epsilon |A_\nu|$.  
By a union bound and the upper bound on the value of $\epsilon$, after 
each $x_\rho$ has ruled out its small set, some $a_\nu$ remains.  Then
$\{ a_\nu : \nu \in {^{m>}2} \}$, $\{ x_\rho : \rho \in {^{m}2} \}$
form a depth $m$ special tree, contradiction.  So one of the $X_\eta$'s (for $\eta$ of length 
$\leq m-1$)  must have been $\epsilon$-excellent, and its size is at least $\epsilon^m|X|$.
\end{proof}

Note that this result can be extended 
 to any $\epsilon < \frac{1}{2}$ by a theorem of 
Malliaris and Moran \cite{Malliaris-Moran}. 
However, in that result the guarantee on the size of the 
excellent set, while still linear, may be smaller. The proof is beyond the scope of this introduction and involves 
multiplicative-weights algorithms.  

\begin{concl}[\cite{Malliaris-Moran} Claim 2.11, in our language] 
For a graph $G$, the following are equivalent:
\begin{enumerate}
\item $G$ is stable, 
\item for every $\epsilon < 1/2$ there is a constant $c = c(\epsilon) > 0$ such that 
for every finite $A \subseteq G$ there is $B \subseteq A$, $|B| \geq c|A|$, such that 
$|B|$ is $\epsilon$-good. 
\end{enumerate}
\end{concl}

As mentioned above, a hypothesis class $(X, \mch)$ can naturally be seen as a bipartite graph. 
The natural analogue in learning of half-graphs is called threshold dimension 
(as was used in Alon-Livni-Malliaris-Moran \cite{almm}). Note that this dimension is 
a priori one more than stability as $\tdim(G) = k$ means the largest half-graph has size 
$k$, i.e., $G$ is $(k+1)$-stable. 

\begin{defn}[Thresholds]
The threshold dimension of a hypothesis class $\mch$, $\tdim(\mch)$, is defined to be the greatest natural number $\ell$, if it exists, such that there exist $x_1, \dots, x_\ell \in X$ and $h_1, \dots, h_\ell \in \mch$ with $h_i(x_j) = 1$ iff $i<j$. If no such $\ell$ exists\footnote{One can also define ordinal threshold dimension, see REFERENCE. Note that in this definition, $\tdim(\mch) = \infty$ does not distinguish between the case where there is a single infinite half-graph, and arbitrarily large finite half-graphs}, $\tdim(\mch) = \infty$.
\end{defn}

The natural analogue in learning of special trees are Littlestone mistake trees, as 
pointed out by Chase and Freitag \cite{cf}, who used this to give new 
examples of Littlestone classes. Note that here too we record the largest 
$d$ where a tree occurs rather than the smallest $d$ where it does not; also, 
following convention, the root of a mistake tree is at level $1$, not level $0$.  
 
\begin{defn}[Littlestone mistake tree] Let $(X, \mch)$ be a hypothesis class. 
A Littlestone mistake tree of height $d$ is 
a full binary tree whose internal nodes are named by elements of X, 
and which is shattered by $\mch$ in the following sense. 
\begin{enumerate}
\item[(a)] We can represent the process of traversing a root-to-leaf path (i.e., a branch) 
in the tree as a sequence of pairs $\langle (x_i, y_i) : 1 \leq i \leq d \rangle$ 
where $(x_i, y_i)$ records that the name of the current node is $x_i \in X$ and 
from this node we travel right $($if $y_i =1$$)$ or left $($if $y_i = 0$$)$. 
\item[(b)] We say $h \in \mch$ realizes a given branch 
$\langle (x_i, y_i) : 1 \leq i \leq d \rangle$ 
if for all $1 \leq i \leq d$, $h(x_i) = y_i$. 
\item[(c)] We say that $\mch$ shatters the tree if every branch is realized by some $h \in \mch$.  
\end{enumerate} 
\end{defn}

\begin{defn}[Littlestone] \label{d:littlestone}
The Littlestone dimension of a hypothesis class, $\ldim(\mch)$,
is defined to be the greatest natural number
$d$, if it exists, so that there exists a Littlestone mistake tree of height $d$ which is
shattered by $\mch$. If no such $d$ exists, we set $\ldim(\mch) = \infty$. 
$\mch$ is a \emph{Littlestone class} if $\ldim(\mch) < \infty$.   
\end{defn} 

Littlestone classes are well-studied in the learning literature, and 
are characterized by being \emph{online learnable} (see Littlestone \cite{littlestone88} and 
Ben-David, P\'{a}l, Shalev-Schwartz \cite{ben-david09agnostic}). For more on Littlestone 
classes and online learning see \cite{ML-book} \S\S 18.1-18.2.\footnote{Note also that 
$\ldim(\mch)=\infty$ a priori does not distinguish between 
shattered trees of arbitrarily large finite height and shattered trees of infinite height. 
See CITE} 

In what follows, we shall use the analogue of goodness, excellence, 
and the Hodges-Shelah lemma in hypothesis classes in the
natural way. This takes up a line of work beginning with 
Alon-Livni-Malliaris-Moran and Bun-Livni-Moran showing that the 
property of being Littlestone, i.e. stable, gave a qualitative characterization 
of when a hypothesis class was differentially-privately PAC learnable.  
This connection of stability/Littlestone/online learning to statistical 
learning was part of the motivation for the present paper.  

For completeness, recall the Vapnik-Chervonenkis dimension of a class $(X,\mch)$.
\begin{defn}[Shattering]\label{def:shatter}
In a hypothesis class $(X,\mch)$, say a set $S\subseteq X$ is \emph{shattered} if, 
for every $S'\subseteq S$, there exists $h\in\mch$ such that $\{x\in X: h(x)=1\}\cap S = S'$.
\end{defn}

\begin{defn}[VC dimension] \label{d:vc}
The Vapnik-Chervonenkis dimension of a hypothesis class, $\vc(\mch)$, is defined to be the greatest 
natural number $m$, if it exists, so that there exists a set $S$ of size $m$ which is shattered by $\mch$. 
If no such $m$ exists, we set $\vc(\mch)=\infty$. $\mch$ is called a VC class if $\vc(\mch)<\infty$.
\end{defn}\label{def:vcdim}

Note that $\ldim \geq \vc$ for every hypothesis class $\mch$ (and in the graph case, for every graph $\mcg$). This is because shattering a set in the way described above corresponds to a mistake tree with the same structure as a Littlestone mistake tree, with the additional condition that all internal nodes which lie in the same level must be the same element of $X$. 

Finally, it will be useful to define the dual of a hypothesis class. 
Informally, any hypothesis class corresponds to a bipartite graph with $X$ on the left,
$\mch$ on the right, and an edge between $x$ and $h$ if and only if $h(x) = 1$.
The dual class corresponds to the bipartite graph we would get by swapping
the left and right sides without changing any edges.  More formally:

\begin{defn}[Dual hypothesis class] \label{d:dual}
For a hypothesis class $(X, \mch)$, define the dual hypothesis class $\dual(\mch)$ to be
$(Y, \gee)$ where $Y = \mch$ is the domain and $\gee = \{ g_x : x \in X \}$ is
a set of functions from $\mch$ to $\{ 0, 1\}$ given by:
$g_x(h) = 1$ if and only if $h(x) = 1$.
\end{defn}

$\mch$ is a VC class if and only if $\dual(\mch)$ is a VC class, possibly of
larger dimension, and $\mch$ is Littlestone
if and only if $\dual(\mch)$ is Littlestone, possibly of larger dimension. For the first fact, see e.g. Lemma 10.3.4 in \cite{matousek-lectures-discgeo}. The second fact is proved by using a Hodges-Shelah tree to say that
Littlestone dimension is finite iff Threshold dimension is finite:

\begin{concl}[Hodges-Shelah lemma translated to hypothesis classes] \label{hodges-learning} 
Given a hypothesis class $(X, \mch)$:  
\begin{enumerate}
\item If $\tdim(\mch) \geq 2^{m+1}$, then $\ldim(\mch) \geq m$.  
\item If $\ldim(\mch) \geq 2^{k+2}-2$, then $\tdim(\mch) \geq k$. 
\end{enumerate}
\end{concl}

since Threshold dimension is clearly symmetric between a class and its dual.

\br

\section{Definition of $\epsilon$-saturation for $\mch$} \label{s:saturation-H}

In this section we give the central definitions of the paper for hypotheses classes, 
corresponding to saturation and to types. Graphs are considered in \S \ref{s:graphs}.  

These definitions work when $\mch$ is finite, but they don't require it. The 
sets of aggregates we take, however, are always finite.

\begin{conv} 
$(X, \mch)$, which we may abbreviate $\mch$, will usually denote a Littlestone class, with 
\begin{itemize}
\item finite Littlestone dimension $\ell$ 
\item and hence finite VC dimension $d$. 
\end{itemize}
$\mch$ can be finite, but need not be.  
\end{conv}

We will state ``$\mch$ is Littlestone'' when it is used in the main definitions and theorems, and 
will keep the convention of $\ell$ and $d$.  

\begin{conv}
Below, $\epsilon$ and $\delta$ $($possibly with subscripts or other decorations$)$ 
are always in the interval $(0,\frac{1}{2})_{\mathbb{R}}$, subject to further 
restrictions as stated.
\end{conv}

\begin{ntn} \label{n:set-theory}
A notational convention: we think of integers set-theoretically, 
so $n = \{ 0, \dots, n-1 \}$ and ``$i<n$'' means $i$ ranges over the $n$ values $0,\dots, n-1$. 
\end{ntn}

In the present paper, we allow good and excellent sets to be weighted.  By convention 
we ask the weights to be positive, although we could allow zero weights at the cost of modifying 
many instances of ``thus there exists $g$'' to ``thus there exists $g$ of nonzero weight'' 
below.   

\begin{defn} \label{d:eps-hypothesis} Given $\epsilon$ and $\mch$, an \emph{$\epsilon$-hypothesis} 
$($or a \emph{weighted $\epsilon$-good set}$)$ 
of $\mch$ is a finite set of pairs:  
\[ H = \{ (h_i, \gamma_i) : i < k \} \]
where $k \in \mathbb{N}$, and:
\begin{enumerate}
\item $\{ h_i : i < k \} \subseteq \mch$, and  
$\langle h_i : i < k \rangle$ is without repetition. 
\item each $\gamma_i \in (0,1]_{\mathbb{R}}$, and $\sum \{ \gamma_i : i < k \} = 1$.  
\item $($$\epsilon$-goodness$)$ For each $x \in X$, for some $\trv = \trv(x, H) = \trv_H(x) \in \{ 0, 1 \}$, 
\[  \sum \{ \gamma_i : h_i(x) = \trv \} \geq 1-\epsilon \mbox{ and hence } \sum \{ \gamma_i : h_i(x) = 1-\trv \} < \epsilon. \] 
\end{enumerate} 
\end{defn}

Here $\trv$ represents the ``truth value'' of the majority opinion between $x$ and $H$. 

\begin{rmk} As the definition records, any $\epsilon$-hypothesis $H$ determines a function 
$\trv_H : X \rightarrow \{ 0, 1 \}$. Say that $H$ \emph{represents} $\trv_H$.   
A priori, $\trv_H$ need not be in $\mch$. Also, a priori, 
different $\epsilon$-hypotheses may represent the same function. 
\end{rmk}

We will use several different notions of approximation. Since these are well-studied 
we keep the usual names:   

\begin{defn}
Suppose we are given $\mch$ and $f: X \rightarrow \{ 0, 1 \}$. Say $f$ is:  
\begin{enumerate}
\item \emph{$k$-realizable} in $\mch$, for a fixed $k \in \mathbb{N}$, when 
for every $Y \subseteq X$, $|Y| \leq k$, there exists $h \in \mch$ such that 
$h \rstr Y = f \rstr Y$. 

\item \emph{finitely realizable} in $\mch$ if $f$ is $k$-realizable in $\mch$ 
for every finite $k$. 

\item \emph{realized} if $f \in \mch$. 

\end{enumerate}
Write $\mch^\kr$ for the set of functions $f: X \rightarrow \{ 0, 1 \}$ which are $k$-realizable 
in $\mch$, and $\mch^{\fr} = \bigcap \{ \mch^\kr : k < \omega \}$ 
for the set of finitely realizable functions. 
\end{defn}

\begin{rmk}
    Note that in the definition above, $\mch^{\kr}$ could be defined as well if $k$ is a positive real number. Then, $\mch^{\kr}=\mch^{\lfloor k \rfloor\operatorname{-real}}$. This will be used in later sections.
\end{rmk}

The next will play a key role. To see why \ref{d:types-H} is preferable to finite realizability here, 
consider that when $X$ is finite, the only finitely realizable functions are those already 
in $\mch$, but for $\epsilon > 1/|X|$ there may be $\epsilon$-types which are not realized, as in 
\S \ref{sa:matchings} below.  

\begin{defn}[$\epsilon$-types] \label{d:types-H}
Suppose we are given $\mch$, $0 < \epsilon < 1/2$, and $f: X \rightarrow \{ 0, 1 \}$.
We call $f$ an \emph{$\epsilon$-type of $\mch$}, or just \emph{$\epsilon$-type}, if 
$f$ is finitely realized in the class 
\[ \{ \trv_H : H \mbox{ is an $\epsilon$-hypothesis of $\mch$ } \}.  \]
\end{defn}

In a slogan, the function $f$ is an $\epsilon$-type if its restriction to any finite set 
agrees with some $\epsilon$-hypothesis (that is, with some finite weighted $\epsilon$-good set of hypotheses). 

\sbr

The central definition of this paper, in the case of hypothesis classes, is:  

\begin{defn}[The $\epsilon$-saturation of $\mch$] \label{d:epsilon-saturation-H} 
Fix $0 < \epsilon < 1/2$ and $(X,\mch)$ a hypothesis class. Define by induction on $n\leq \omega$ 
a hypothesis class $\mch_n(\epsilon)$ over $X$: 

\begin{enumerate}
\item $\mch_0(\epsilon) = \mch$. 
\item $\mch_{n+1}(\epsilon) = 
\mch_n \cup \{ \trv_H : \mbox{ $H$ is an $\epsilon$-hypothesis of $\mch_n$} \}$. 
\item $\mch_\infty(\epsilon) := \mch_\omega(\epsilon)  = \bigcup \{ \mch_n(\epsilon) : n < \omega \}$. 
\end{enumerate} 
We call $\mch_\infty(\epsilon)$ ``the $\epsilon$-saturation of $\mch$.'' 
\end{defn}

Note that in condition (2) we define $\mch_{n+1}$ as a set of functions, not names 
for functions; so in particular, 
if $H_1$ and $H_2$ are two different $\epsilon$-hypotheses (that is, different finite weighted 
good sets) but $\trv_{H_1} = \trv_{H_2}$ as functions, we only add one copy of this function. 
(In particular, we do not re-add copies of functions already in $\mch$.)  
In later calculations, it will not matter which representation we choose.  

The words ``saturation'' and ``type'' come from model theory. 
In model theoretic language, saturation describes a structure where 
all types of a suitable size are realized.  So this notation suggests a 
connection of $\epsilon$-saturation and $\epsilon$-types, 
which will be justified in due course. 

\begin{rmk}
In Definition $\ref{d:epsilon-saturation-H}$, note $X$ stays fixed. 
If we were to increase both $X$ and $H$ by adding majorities, then in order to retain 
total functions, we would need (in the language of stable regularity) 
to upgrade ``$\epsilon$-good'' in the definition of $\epsilon$-hypothesis above
to ``$\epsilon$-excellent.'' This will be a consideration when we deal with 
graphs rather than hypothesis classes, see \ref{d:epsilon-vertex} below.  
\end{rmk} 

\begin{rmk} 
In Definition $\ref{d:epsilon-saturation-H}$ we could have defined $\mch_\alpha(\epsilon)$ for 
ordinals $\alpha$ by transfinite induction, but this will not be used here. 
\end{rmk}

\begin{rmk}
Observe that as $n \rightarrow \infty$, the set of $\epsilon$-types may change even though $X$ 
stays the same, because the $\epsilon$-hypotheses available in $\mch_n(\epsilon)$ to witness 
finite realizability may be changing. See for instance \S \ref{example15}.  
\end{rmk}

\sbr

\section{Preservation of dimension for small $\epsilon$} \label{s:small-epsilon}

This section proves Theorem \ref{sat:hypothesis} 
which says that Littlestone, threshold, and VC dimension are preserved 
from $\mch$ to $\mch_\infty(\epsilon)$ provided $\epsilon$ is 
sufficiently small relative to the dimension in question, 
by applying ideas from stable regularity (as outlined in \S \ref{s:preliminaries}) 
to weighted good and excellent sets in the natural way.  The theorem will rely on several lemmas.  

For orientation, recall ``$\ldim \geq \ell$''
and ``$\vc \geq d$'' and ``$\thr \geq k$'' are witnessed by the existence of
certain finite configurations: the nodes and leaves of
a shattered tree of height
$\ell$ in the first case, the shattered set of size $d$
(that is, $d$ elements and $2^d$ hypotheses realizing
all possible functions over them) in the second, and
a half-graph of length $k$ in the third.  (We will also use that $VC$ dimension can 
be witnessed by a shattered tree where all the labels at a given level are the same.)  So 
in each case, the Littlestone, VC, or threshold dimension of 
$\mch_n$ is nondecreasing with $n$. Also, since these are finite configurations,  
if the given configuration exists in 
$\mch_\infty$ then already it exists in $\mch_{n+1}$ for some $n$. 

We will say ``special tree'' rather than Littlestone tree to emphasize 
the parallelism, since in some cases we will want to choose the nodes from 
the $X$ side and the leaves from the hypothesis 
side, and in other cases the reverse.   

\begin{rmk}
Recall our set-theoretic notation: for $s \in \mathbb{N}$, ``$i<s$'' means that $i$ takes 
the $s$ values $0, \dots, s-1$.  Recall also from $\ref{d:special-trees}$ that 
${^{s>}2}$ is the set of binary sequences of length strictly less than $s$, and 
${^s 2}$ is the set of binary sequences of length $s$. 
\end{rmk}

For Lemmas \ref{lemma2a}-\ref{lemma3d} let $(X, \mch)$ be an arbitrary but fixed hypothesis class. 
Here we can allow ``$\epsilon \leq \dots$'' instead of ``$\epsilon < \dots$'' just because the 
definition of $\epsilon$-hypothesis has a strict inequality on the smaller side.  

\begin{lemma} \label{lemma2a} For any $s \in \mathbb{N}$ and $\epsilon \leq 1/s$,
if there is a special tree of
height $s$ with the internal nodes chosen from $X$ and the leaves chosen from $\mch_{\infty}$,
then there is a special tree of height $s$ 
with the same internal nodes, and the leaves chosen from $\mch$. 
\end{lemma}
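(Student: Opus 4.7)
The plan is to induct on the maximum ``level'' at which the leaves appear in the inductive construction of $\mch_{\infty}(\epsilon)$, peeling off one layer at a time while keeping the internal nodes of the tree fixed.

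Concretely, suppose we are given a special tree of height $s$ with internal nodes $\{a_\eta : \eta \in {^{s>}2}\} \subseteq X$ and leaves $\{b_\nu : \nu \in {^s 2}\} \subseteq \mch_\infty(\epsilon)$. Since $\mch_\infty(\epsilon) = \bigcup_n \mch_n(\epsilon)$ and there are only $2^s$ leaves, there is a least $N$ such that every $b_\nu$ lies in $\mch_N(\epsilon)$. I proceed by induction on $N$. The case $N = 0$ is vacuous. For the inductive step, fix any leaf $\nu$ with $b_\nu \notin \mch_{N-1}(\epsilon)$; then by Definition \ref{d:epsilon-saturation-H}, $b_\nu = \trv_H$ for some $\epsilon$-hypothesis $H = \{(h_i, \gamma_i) : i<k\}$ of $\mch_{N-1}(\epsilon)$.

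The key observation is the following union bound. Along the branch to $\nu$ there are exactly $s$ internal nodes, namely the $a_\eta$ for $\eta \tlfs \nu$. For each such $\eta$, the special-tree condition forces $b_\nu(a_\eta) = t$, where $t$ is the bit with $\eta^\smallfrown\langle t \rangle \tlf \nu$; and the $\epsilon$-goodness condition (3) of Definition \ref{d:eps-hypothesis} says that the total $\gamma$-weight of indices $i$ with $h_i(a_\eta) \neq b_\nu(a_\eta)$ is strictly less than $\epsilon$. Summing over the $s$ path nodes, the total weight of indices that fail at any of them is strictly less than $s \epsilon \leq 1$. Since the weights sum to $1$ and are all positive, some index $i^\ast$ satisfies $h_{i^\ast}(a_\eta) = b_\nu(a_\eta)$ for every $\eta \tlfs \nu$. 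Now replace $b_\nu$ by $b'_\nu := h_{i^\ast} \in \mch_{N-1}(\epsilon)$. Because we only require agreement at the internal nodes along the single path to $\nu$, this substitution preserves the special-tree conditions for that branch, and it does not affect any other leaf. Doing this simultaneously for every $\nu$ with $b_\nu \notin \mch_{N-1}(\epsilon)$ produces a special tree of height $s$ with the same internal nodes and all leaves in $\mch_{N-1}(\epsilon)$. Applying the inductive hypothesis finishes the argument.

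The only place any ``work'' happens is the union bound, which requires the hypothesis $\epsilon \leq 1/s$; this is the exact trade-off between the number of branch constraints and the allowed error per constraint in the definition of an $\epsilon$-hypothesis. There is no real obstacle here, since leaves on different branches are independent (unlike in Lemma \ref{l:excellent}, where one must worry about a \emph{virtual} structure being upgraded to an \emph{actual} special tree shared by many virtual leaves simultaneously); the only subtlety is to make sure the inductive parameter $N$ is well-defined, which it is because the tree is finite.
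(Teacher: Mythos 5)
Your proof is correct and is essentially the same argument as the paper's: pick the least stage $N$ at which all the leaves appear, represent each "new" leaf by an $\epsilon$-hypothesis of $\mch_{N-1}(\epsilon)$, and use the union bound over the $s$ constraints along that leaf's branch (where $s\epsilon \le 1$ and per-constraint error weight is strictly less than $\epsilon$) to pick a real representative in $\mch_{N-1}(\epsilon)$. The only stylistic difference is that you organize the descent as a clean induction on $N$, while the paper phrases the same step as a minimal-counterexample/contradiction argument; your version is, if anything, tidier.
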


\begin{proof} 
Consider a special tree given by
\[ \langle x_\eta : \eta \in {^{s>}2} \rangle \mbox{ from X and }
\langle y_\rho : \rho \in {^s 2} \rangle \mbox{ from $\mch_{\infty}$. } \] 
Let $n < \omega$ be minimal such that $\{ y_\rho : \rho \in {^s 2} \} \subseteq \mch_{n+1}$.  
For each $y_\rho$ choose an $\epsilon$-hypothesis $H_\rho$ of $\mch_n$ representing the same
function over $X$ as $y_\rho$. So we now have
$\langle x_\eta : \eta \in {^{s>}2} \rangle$
and $\langle H_\rho : \rho \in {^s 2} \rangle$ with $\trv(H_\rho, x_\eta) = 1$
if $\eta^\smallfrown \langle 1 \rangle \tlf \rho$
and $\trv(H_\rho, x_\eta) = 0$ if $\eta^\smallfrown \langle 0 \rangle \tlf \rho$.
For each $\rho \in {^s 2}$, define the error set 
\[ E_\rho = \bigcup \{ (h_i, \gamma_i) \in H_\rho ~:~ \trv(h_i, x_\eta) \neq \trv(H_\rho, x_\eta) \mbox{ for some $\eta$ with }  \eta \tlfs \rho \}. \] 
Observe that $\{ \eta : \eta \tlfs \rho \}$ has size $s$ and that for each $\eta$, the ``error weight'' is $<\epsilon$. 
In other words, $\sum \{ \gamma_i : (h_i,\gamma_i) \in E_\rho \} < \epsilon s \leq (1/s)s = 1$. 
So there is some $(h,\gamma) \in H_\rho \setminus E_\rho$. Set $a_\rho := h$.  
Having thus defined each $a_\rho$, we have found a special tree with 
internal nodes $\langle x_\eta : \eta \in {^{s>}2} \rangle$ from $X$ and
leaves $\langle a_\rho : \rho \in {^s 2} \rangle$ from $\mch_n$. 
Either $n=0$, or we contradict our choice of $n$, so either way we are done.
\end{proof}

\begin{lemma} \label{lemma2c}
For any $s \in \mathbb{N}$ and $\epsilon \leq 1/2^s$,  
if there is a special tree of height $s$ where the internal nodes are  
from $\mch_{\infty}$ and the leaves are from $X$, 
there is also a special tree of height $s$ 
with internal nodes from $\mch$ and the same leaves from $X$.  
\end{lemma}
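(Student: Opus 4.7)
My plan is to mirror the proof of Lemma \ref{lemma2a}, but with the roles of ``internal nodes'' and ``leaves'' swapped. We are given a special tree of height $s$ with internal nodes $\langle y_\eta : \eta \in {^{s>}2}\rangle$ drawn from $\mch_\infty$ and leaves $\langle x_\rho : \rho \in {^s 2}\rangle$ drawn from $X$. Pick $n < \omega$ minimal such that every $y_\eta$ lies in $\mch_{n+1}$, and for each $\eta$ choose an $\epsilon$-hypothesis $H_\eta$ of $\mch_n$ representing the same function $y_\eta$ on $X$. The objective is to extract, for each $\eta$, an actual hypothesis $a_\eta \in \mch_n$ whose true edges to the leaves $x_\rho$ with $\eta \tlfs \rho$ match the prescribed pattern of the special tree; having done so, we get a special tree of the same height with internal nodes in $\mch_n$ and the same leaves, and then either $n=0$ and we are done, or we descend by minimality of $n$.

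The key step, as in \ref{lemma2a}, is the union bound over ``error weights.'' For each internal node $y_\eta$, set
\[ E_\eta = \bigcup \{ (h_i, \gamma_i) \in H_\eta ~:~ h_i(x_\rho) \neq \trv(H_\eta, x_\rho) \mbox{ for some $\rho$ with } \eta \tlfs \rho \}. \]
By $\epsilon$-goodness of $H_\eta$, for each fixed $\rho$ with $\eta \tlfs \rho$ the weight of hypotheses in $H_\eta$ disagreeing with $\trv_{H_\eta}$ at $x_\rho$ is strictly less than $\epsilon$. The number of such $\rho$ is $2^{s-|\eta|} \leq 2^s$, attained at the root. So $\sum\{\gamma_i : (h_i,\gamma_i) \in E_\eta\} < \epsilon \cdot 2^s \leq 1$, and since the total weight is $1$ we can choose some $(h,\gamma) \in H_\eta \setminus E_\eta$ and set $a_\eta := h$. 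By construction $h(x_\rho) = \trv(H_\eta, x_\rho) = y_\eta(x_\rho)$ for every leaf descendant $x_\rho$ of $\eta$, so the edges along the tree are preserved.

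The structural difference with \ref{lemma2a} — and the reason the bound on $\epsilon$ tightens from $1/s$ to $1/2^s$ — is precisely that the union bound now runs over the \emph{descendant leaves} of a fixed internal node, of which there are up to $2^s$, rather than over the \emph{ancestor internal nodes} of a fixed leaf, of which there are only $s$. There is no real obstacle to overcome beyond this counting: the argument is syntactically the same, and the rest of the statement (minimality of $n$ giving the inductive descent, together with the starting case $n = 0$) transfers verbatim.
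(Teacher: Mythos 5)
Your proposal is correct and matches the paper's own proof of this lemma essentially line by line: choose minimal $n$ with the internal nodes in $\mch_{n+1}$, represent each by an $\epsilon$-hypothesis of $\mch_n$, form the error set over the at-most-$2^s$ descendant leaves, and extract a surviving hypothesis via the union bound $\epsilon\cdot 2^s \leq 1$. Your closing remark about why the bound tightens from $1/s$ to $1/2^s$ relative to Lemma \ref{lemma2a} is an accurate reading of the asymmetry in the two arguments.
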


\begin{proof}
Consider $\langle z_\eta : \eta \in {^{s>}2} \rangle$ 
from $\mch_{\infty}$ and $\langle x_\rho : \rho \in {^s 2} \rangle$ from $X$ which form 
a special tree.  
Let $n < \omega$ be minimal so that $\{ z_\eta : \eta \in {^{s>}2} \} \subseteq \mch_{n+1}$.  
For each $\eta \in {^{s>}2}$ choose an $\epsilon$-hypothesis $H_\eta$ of $\mch_n$ representing the 
same type over $X$ as $z_\eta$. For orientation, this means  
$\trv(H_\eta, x_\rho) = 1$
if $\eta^\smallfrown \langle 1 \rangle \tlf \rho$
and $\trv(H_\eta, x_\rho) = 0$ if $\eta^\smallfrown \langle 0 \rangle \tlf \rho$.
For each of the internal indices $\eta \in {^{s>}2}$ there are $\leq 2^s$ leaf indices 
$\rho \in 2^s$ with $\eta \tlf \rho$. 
For each $\eta \in {^{s>}2}$, define the error set 
\[ E_\eta = \{ (h_i, \gamma_i) \in H_\eta ~:~ \trv(h_i,x_\rho) \neq \trv(H_\eta, x_\rho) \mbox{ for some $\rho$ with } \eta \tlfs \rho \}. \]
Then $\sum \{ \gamma_i : (h_i, \gamma_i) \in E_\eta \} < \epsilon 2^s \leq 1$ so there is 
some $(h,\gamma) \in H_\eta \setminus E_\eta$. Set $c_\eta = h$. Having chosen all such $c_\eta$s, 
$\langle c_\eta : \eta \in {^{s>}2} \rangle$ from $H_n$ and 
 $\langle x_\rho : \rho \in {^s 2} \rangle$ from $X$ give a special tree of height $s$. 
\end{proof}

\begin{lemma} \label{lemma3b}
In the notation of the previous proof: suppose that our original tree  
$\langle z_\eta : \eta \in {^{s>}2} \rangle$, $\langle x_\rho : \rho \in {^s 2} \rangle$ 
had the property that for all $\eta_1, \eta_2 \in {^{s>}2}$, 
if $\lgn(\eta_1) = \lgn(\eta_2)$ then $z_{\eta_1} = z_{\eta_2}$. 
Then we can ensure that for all $\eta_1, \eta_2 \in {^{s>}2}$, if $\lgn(\eta_1) = \lgn(\eta_2)$ then  
$c_{\eta_1} = c_{\eta_2}$.   
\end{lemma}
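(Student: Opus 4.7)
The plan is to piggyback directly on the proof of Lemma~\ref{lemma2c}, modifying only the choice of representing $\epsilon$-hypotheses so that they depend on the \emph{level} of a node rather than on the node itself, and then enlarging the error sets accordingly.

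First I would observe that since $z_{\eta_1} = z_{\eta_2}$ whenever $\lgn(\eta_1) = \lgn(\eta_2)$, the internal nodes of the tree are really indexed by level: there are elements $z_0, \dots, z_{s-1} \in \mch_{\infty}$ with $z_\eta = z_{\lgn(\eta)}$. As in the previous lemma, let $n < \omega$ be minimal with $\{z_\ell : \ell < s\} \subseteq \mch_{n+1}$, and for each level $\ell < s$ pick a single $\epsilon$-hypothesis $H_\ell$ of $\mch_n$ representing $z_\ell$. We then set $H_\eta := H_{\lgn(\eta)}$; so all $\eta$'s of the same length use the same $H$.

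Next I would redefine the error set \emph{per level} rather than per node. For each $\ell < s$, every leaf $\rho \in {^s 2}$ satisfies $\eta \tlfs \rho$ for exactly one $\eta$ of length $\ell$, and the special-tree condition forces $\trv(H_\ell, x_\rho) = \rho(\ell)$ (the $(\ell+1)$st bit of $\rho$) regardless of which $\eta$ we used. So define
\[ E_\ell \;=\; \{(h_i, \gamma_i) \in H_\ell \,:\, h_i(x_\rho) \neq \rho(\ell) \text{ for some } \rho \in {^s 2}\}. \]
Since there are $2^s$ leaves, and for each individual $\rho$ the $\epsilon$-goodness of $H_\ell$ gives error-weight strictly less than $\epsilon$, a union bound yields $\sum \{\gamma_i : (h_i,\gamma_i) \in E_\ell\} < \epsilon \cdot 2^s \leq 1$. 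Hence $H_\ell \setminus E_\ell \neq \emptyset$, and we may choose $(h, \gamma) \in H_\ell \setminus E_\ell$ and set $c_\ell := h$. Finally, define $c_\eta := c_{\lgn(\eta)}$ for every $\eta \in {^{s>}2}$.

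I would then verify that $\langle c_\eta : \eta \in {^{s>}2}\rangle$, $\langle x_\rho : \rho \in {^s 2}\rangle$ form a special tree in $\mch_n$: for any $\eta \tlf \rho$, since $c_\eta = c_{\lgn(\eta)} \notin E_{\lgn(\eta)}$, we have $c_\eta(x_\rho) = \rho(\lgn(\eta))$, matching the required edge/nonedge pattern. By construction $c_{\eta_1} = c_{\eta_2}$ whenever $\lgn(\eta_1) = \lgn(\eta_2)$. Iterating (or recalling we can minimize $n$) pushes the tree all the way down to $\mch$. The main subtlety — really the only place where this deviates from Lemma~\ref{lemma2c} — is the enlarged error set, which could in principle be too large; but the bound $\epsilon \leq 1/2^s$ from the hypothesis of that lemma is exactly what keeps the union bound below $1$, so nothing extra is needed.
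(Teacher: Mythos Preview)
Your proof is correct and follows essentially the same approach as the paper: choose a single representing $\epsilon$-hypothesis $H_t$ per level, then let the error set for $H_t$ range over all $2^s$ leaves so that the union bound $\epsilon \cdot 2^s \leq 1$ still goes through. You have simply written out in more detail what the paper sketches in two sentences.
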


\begin{proof} 
The property says that for each $t < s$, for every $\eta$ of length $t$ we chose the same value. 
For each such $t$, then, make sure to choose the same $H_\eta$ representing each $z_\eta$ of length $t$, 
and denote it by $H_t$. 
Then we simply let the error set for $H_t$ 
collect the wrong votes from all of the $x_\rho$'s, of which there are $2^s$-many. 
\end{proof}

\begin{lemma} \label{lemma3d}
Suppose there is a half-graph consisting of $\langle x_i : i < s \rangle$ from $X$  
and $\langle y_i : i < s \rangle$ from $\mch_\infty$. If $\epsilon \leq 1/s$, 
there is a half-graph consisting of the same $\langle x_i : i < s \rangle$ from $X$ 
and $\langle z_i : i < s \rangle$ from $\mch$.  
\end{lemma}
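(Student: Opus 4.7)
The plan is to follow the same template as Lemmas \ref{lemma2a} and \ref{lemma2c}: reduce the layer containing the $y_i$'s one step at a time by replacing each $y_i$ with a single element of a representing $\epsilon$-hypothesis that avoids all errors on the $s$ columns $x_0, \dots, x_{s-1}$.

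Concretely, I would let $n < \omega$ be minimal with $\{y_i : i < s\} \subseteq \mch_{n+1}(\epsilon)$, and for each $i < s$ fix an $\epsilon$-hypothesis $H_i$ of $\mch_n$ with $\trv_{H_i} = y_i$ as functions on $X$. For each $j < s$, $\epsilon$-goodness of $H_i$ at $x_j$ gives that the subset of $H_i$ whose vote on $x_j$ disagrees with $y_i(x_j) = \trv_{H_i}(x_j)$ carries $\gamma$-weight strictly less than $\epsilon$. Define the error set
\[ E_i = \{ (h, \gamma) \in H_i : h(x_j) \neq y_i(x_j) \text{ for some } j < s \}. \]
A union bound over $j < s$ then shows the total weight of $E_i$ is $< s \epsilon \leq 1$; since $H_i$ has total weight exactly $1$, there exists $(h, \gamma) \in H_i \setminus E_i$. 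Call it $z^*_i$.

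By construction $z^*_i \in \mch_n$ and $z^*_i(x_j) = y_i(x_j)$ for all $j < s$, so $\langle x_i : i < s \rangle$, $\langle z^*_i : i < s \rangle$ form a half-graph on the same $x$-side with the hypothesis side now drawn from $\mch_n$. Either $n = 0$ and we have produced the desired half-graph in $\mch$, or this contradicts the minimality of $n$.

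There is no substantive obstacle: the lemma is a direct analogue of \ref{lemma2a}, with the $2^s$ leaves of a special tree replaced by the $s$ columns of the half-graph, which is precisely why the bound $\epsilon \leq 1/s$ (rather than $\epsilon \leq 1/2^s$ as in \ref{lemma2c}) suffices.
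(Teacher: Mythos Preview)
Your proof is correct and follows exactly the approach the paper intends: the paper's own proof is the one-line remark that ``the error set picks up an error of weight strictly less than $\epsilon$ from each of the at most $s$ elements on the opposite side,'' which is precisely the union-bound argument you spelled out, modeled on Lemmas~\ref{lemma2a} and~\ref{lemma2c}. (One trivial notational nit: set $z^*_i := h$, not the pair $(h,\gamma)$.)
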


\begin{proof}
Again, the error set picks up an error of weight strictly less than $\epsilon$ from 
each of the at most $s$ elements on the opposite side. 
\end{proof}

Summarizing: 

\begin{theorem}[Preservation theorems for small $\epsilon$] \label{sat:hypothesis} 
Let $\mch$ be any hypothesis class and $s \in \mathbb{N}$. 

\begin{enumerate}
\item If $\epsilon \leq 1/s$, 
\[ \ldim(\mch) \geq s ~ \iff ~ \ldim(\mch_\infty (\epsilon)) \geq s \]
and also 
\[ \vc(\mch) \geq s ~ \iff ~ \vc(\mch_\infty(\epsilon)) \geq s. \]

\item If $\epsilon \leq 1/2^s$,  
\[ \ldim (\dual(\mch) ) \geq s ~ \iff ~ \ldim( \dual(\mch_\infty(\epsilon))) \geq s \]  
and also 
\[ \vc (\dual(\mch)) \geq s ~ \iff ~ \vc( \dual(\mch_\infty(\epsilon)) ) \geq s. \]

\sbr
\item Hence if $\mch$ is a Littlestone class and $\epsilon \leq 1/(\ldim(\mch)+1)$ then  
\[ \ldim(\mch) = \ldim(\mch_\infty(\epsilon)). \]
\item Hence if $\mch$ is a VC class and $\epsilon \leq 1/(\vc(\mch)+1)$ then 
\[ \vc(\mch) = \vc(\mch_\infty(\epsilon)). \]
\item Note: regardless of $\epsilon$, if $\mch$ is not Littlestone, $\ldim(\mch) = \ldim(\mch_\infty(\epsilon)) = \infty$, and if $\mch$ is not a VC class, 
$\vc(\mch) = \vc(\mch_\infty(\epsilon)) = \infty$.  
\sbr
\item In particular, if $\ldim(\mch) = \ell \in \mathbb{N}$ 
and $\epsilon < 1/2^{\ell+2}$ then $\ldim(\mch) = \ldim(\mch_\infty(\epsilon))$, $\thr(\mch) = \thr(\mch_\infty(\epsilon))$, $\ldim(\dual(\mch)) = \ldim(\dual(\mch_\infty(\epsilon)))$, 
$\vc(\mch) = \vc(\mch_\infty(\epsilon))$ 
and $\vc(\dual(\mch)) = \ldim(\vc(\mch_\infty(\epsilon)))$. 
\end{enumerate}
\end{theorem}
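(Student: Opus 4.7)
The plan is to reduce each claim to the corresponding pullback lemma from Lemmas \ref{lemma2a}--\ref{lemma3d}. One direction of every equivalence is trivial, because $\mch \subseteq \mch_\infty(\epsilon)$ makes every tree, shattered set, or half-graph witnessing a dimension in $\mch$ automatically a witness in $\mch_\infty(\epsilon)$. The content, in each case, is the reverse: pulling a witness back from $\mch_\infty(\epsilon)$ to $\mch$ while preserving its size, which is exactly what the lemmas deliver.

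For part (1), a Littlestone mistake tree of height $s$ witnessing $\ldim(\mch_\infty(\epsilon)) \geq s$ is a special tree with internal nodes from $X$ and leaves from $\mch_\infty(\epsilon)$, so Lemma \ref{lemma2a} under $\epsilon \leq 1/s$ replaces the leaves with elements of $\mch$, leaving the internal nodes untouched. The VC case is the sub-case in which all internal nodes at a given level are labeled by the same $x \in X$, and this structure is preserved automatically since Lemma \ref{lemma2a} does not alter internal nodes. Part (2) is symmetric for the dual: a shattered tree witnessing $\ldim(\dual(\mch_\infty(\epsilon))) \geq s$ has internal nodes from $\mch_\infty(\epsilon)$ and leaves from $X$, so Lemma \ref{lemma2c} under $\epsilon \leq 1/2^s$ replaces the nodes with elements of $\mch$; for the VC-dual variant, the ``same node at each level'' structure is preserved by Lemma \ref{lemma3b}.

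Parts (3)--(5) follow at once. If $\ldim(\mch) = \ell$, then taking $s = \ell+1$ in part (1), together with the trivial inequality $\ldim(\mch_\infty(\epsilon)) \geq \ldim(\mch)$, forces equality whenever $\epsilon \leq 1/(\ell+1)$; the VC analogue is parallel, and when $\ldim(\mch) = \infty$ the trivial inequality already gives $\ldim(\mch_\infty(\epsilon)) = \infty$.

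For part (6), the principal input beyond (1)--(5) is Conclusion \ref{hodges-learning}: when $\ldim(\mch) = \ell$, the contrapositive of (1) gives $\thr(\mch) < 2^{\ell+2}$, and by symmetry of threshold dimension the same bound holds for $\dual(\mch)$, so $\epsilon < 1/2^{\ell+2}$ suffices for Lemma \ref{lemma3d} to preserve $\thr$. The same bound is well below $1/(\ell+1)$, so (3) and (4) furnish preservation of $\ldim$ and $\vc$ immediately, and the dual equalities follow by invoking part (2) at the relevant values of $s$. The main obstacle I foresee is bookkeeping rather than conceptual: for each of the five dimensions one must verify that the single hypothesis $\epsilon < 1/2^{\ell+2}$ dominates the governing pullback bound ($1/s$ for $\ldim$, $\vc$, $\thr$ and $1/2^s$ for the dual configurations), which in turn requires using the Hodges--Shelah bounds to control the dual dimensions in terms of $\ell$ before applying parts (1) and (2).
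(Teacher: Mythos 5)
Your proposal matches the paper's proof in structure: each claim of Theorem~\ref{sat:hypothesis} reduces to the corresponding pullback lemma among Lemmas~\ref{lemma2a}--\ref{lemma3d}, with the trivial direction coming from monotonicity since the witnessing configuration (tree, shattered set, half-graph) persists under $\mch \subseteq \mch_\infty(\epsilon)$. Parts (1)--(5) are handled exactly as in the paper, including the observation that VC shattering corresponds to a special tree whose internal nodes are constant on each level, a structure Lemma~\ref{lemma2a} leaves untouched and Lemma~\ref{lemma3b} explicitly preserves in the dual.

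The one place you are too quick is part (6), where you describe the remaining work as ``bookkeeping.'' The paper is similarly terse, so this is a shared issue, but it is worth naming. The three primal dimensions do dominate easily: $1/2^{\ell+2} \le 1/(\ell+1) \le 1/(d+1)$, and Conclusion~\ref{hodges-learning}(1) gives $\thr(\mch) < 2^{\ell+2}$, so Lemmas~\ref{lemma2a} and~\ref{lemma3d} apply as you say. However, for the two dual equalities your plan is to ``invoke part (2) at the relevant values of $s$,'' and part (2) requires $\epsilon \le 1/2^{s}$ with $s = \ldim(\dual(\mch))+1$ (resp.\ $s=\vc(\dual(\mch))+1$). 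The Hodges--Shelah bounds do not control $\ldim(\dual(\mch))$ by $\ell+1$; they only give a bound that is doubly exponential in $\ell$, and the true blow-up can already be exponential (e.g.\ $\mch$ the $n$ coordinate projections on $X=\{0,1\}^n$ has $\ldim(\mch)\approx\log_2 n$ but $\ldim(\dual(\mch))=n$). So $\epsilon < 1/2^{\ell+2}$ is nowhere near $1/2^{\ldim(\dual(\mch))+1}$ in general, and the verification you defer would not go through by the route you sketch. If you want to preserve the dual claims of part (6) at this $\epsilon$, you need an argument beyond a straightforward appeal to Lemmas~\ref{lemma2c} and~\ref{lemma3b}; the paper's own phrase ``follow from the earlier statements'' masks the same difficulty.
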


\begin{proof}
\begin{enumerate}
\item[(1)] The first equation is Lemma \ref{lemma2a}. The second is also Lemma \ref{lemma2a}, 
with the extra observation that a hypothesis class $(X, \gee)$ shatters a subset 
$\{ x_0, \dots, x_{s-1} \}$ from $X$ if and only if we can find a special tree of height $s$ 
with internal nodes from $X$ and leaves from $\gee$ where, for each $t<s$, every node at 
level $t$ is equal to $x_t$.  

\item[(2)] As in (1) using Lemmas \ref{lemma2c} and \ref{lemma3b}. 

\item[(3)] By monotonicity, $\ldim(\mch) \leq \ldim(\mch_\infty(\epsilon))$. 
Let $\ell = \ldim(\mch)$. 
By the choice of $\epsilon$ and Lemma $\ref{lemma2a}$, if $\ldim(\mch_\infty(\epsilon)) \geq \ell+1$
then also $\ldim(\mch) \geq \ell+1$, contradiction. 

\item[(4)] As in (3) with $\vc$ in place of $\ldim$.  

\item[(5)] By monotonicity. 

\item[(6)] We know that $\ldim(\mch) \geq \vc(\mch)$ so the results on 
$\ldim$ and $\vc$ of $\mch_\infty(\epsilon)$ and its dual follow from the earlier statements. 
The minor new point is that a small $\epsilon$ in terms of 
$\ldim(\mch)$ helps with the threshold dimension also. 
Let $k = \thr(\mch)$.  Taking the contrapositive of 
Conclusion \ref{hodges-learning}(1) in the case $m=\ell+1$ 
we see that $k < 2^{\ell+2}$ and so the contrapositive of $\ref{lemma3d}$ applies.   
\end{enumerate}
This completes the proof. 
\end{proof}

Note: these results could have been stated in the language of $k$-realizability, which has 
other advantages, but then they would 
not as obviously extend to graphs. 

Before continuing, the reader may wish to look at the section of motivating examples, \S \ref{s:examples1} below. 

\sbr

\section{Characterization of $\epsilon$-saturation for Littlestone classes}

\begin{defn}
Call a hypothesis class $\mbh$ $\epsilon$-saturated if every $\epsilon$-type of $\mbh$ is realized in $\mbh$. 
\end{defn}

The aim of this section is to prove:

\begin{theorem}[$\mch_\infty(\epsilon)$ is $\epsilon$-saturated] \label{t:saturation1} 
Suppose $\mch$ and $\epsilon < 1/2$ are given. Suppose $\mch_\infty(\epsilon)$ is a Littlestone 
class $($hence also $\mch$ is a Littlestone class$)$. Then every $\epsilon$-type of $\mch_\infty(\epsilon)$ is realized in $\mch_\infty(\epsilon)$, i.e. $\mch_\infty(\epsilon)$ is $\epsilon$-saturated.
\end{theorem}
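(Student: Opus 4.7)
Let $\mch^* := \mch_\infty(\epsilon)$, and assume by hypothesis that $\ell^* := \ldim(\mch^*) < \infty$; then $\vc(\mch^*) \leq \ell^*$, and by the duality facts recorded in the preliminaries $\dual(\mch^*)$ is also Littlestone, hence has finite VC dimension. I begin with a closure property of $\mch^*$ that will be used twice: any $\epsilon$-hypothesis $H$ of $\mch^*$ has \emph{finite} support, and since $\mch^* = \bigcup_n \mch_n(\epsilon)$ is an increasing union, that support lies in some $\mch_n(\epsilon)$, so $\trv_H \in \mch_{n+1}(\epsilon) \subseteq \mch^*$; conversely every $h \in \mch^*$ equals $\trv_H$ for the trivial $H = \{(h,1)\}$. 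Hence $\{\trv_H : H\text{ an $\epsilon$-hypothesis of }\mch^*\} = \mch^*$, and ``$f$ is an $\epsilon$-type of $\mch^*$'' unpacks to: for every finite $Y \subseteq X$ there exists $h_Y \in \mch^*$ with $h_Y \rstr Y = f \rstr Y$. Moreover, by the same closure property, to show $f \in \mch^*$ it suffices to produce a single $\epsilon$-hypothesis $H$ of $\mch^*$ with $\trv_H = f$ on all of $X$.

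The plan for producing $H$ is a two-stage argument: minimax followed by finitization. Consider the zero-sum game with adversary strategies $X$, learner strategies $\mch^*$, and payoff $\mathbf{1}[h(x) \neq f(x)]$ to the adversary. Using $\vc(\mch^*) < \infty$ together with the Vapnik--Chervonenkis $\epsilon$-net theorem applied to the class of symmetric differences $\{h \triangle f : h \in \mch^*\}$ (whose VC dimension is at most $\vc(\mch^*)$), the finite realizability of $f$ upgrades to \emph{distribution-wise} realizability: for any probability measure $\mu$ on $X$ and any $\epsilon' > 0$, choosing a finite $(\mu,\epsilon')$-net $Y$ and applying finite realizability to get $h_Y \in \mch^*$ with $h_Y \rstr Y = f \rstr Y$ forces $\mu(h_Y \triangle f) < \epsilon'$. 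Applying the minimax theorem in the form appropriate to mixed strategies on $\mch^*$ (e.g.\ by taking weak-$\ast$ limits of the learner's optimal strategies on an increasing sequence of finite subgames), this yields a probability measure $D$ on $\mch^*$ with $\Pr_{h \sim D}[h(x) \neq f(x)] < \epsilon/4$ uniformly in $x \in X$.

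The final step finitizes $D$ using the finiteness of $\vc(\dual(\mch^*))$. The events $\{h \in \mch^* : h(x) \neq f(x)\}$, indexed by $x \in X$, form a class over $\mch^*$ whose VC dimension is at most $\vc(\dual(\mch^*)) < \infty$, so the Vapnik--Chervonenkis $\epsilon$-approximation theorem supplies a finite i.i.d.\ $D$-sample $H \subseteq \mch^*$ with $|\Pr_{h \sim H}[h(x) \neq f(x)] - \Pr_{h \sim D}[h(x) \neq f(x)]| < \epsilon/4$ \emph{simultaneously} for all $x \in X$. Then $H$, with uniform weights $1/|H|$, is an $\epsilon$-hypothesis of $\mch^*$ with $\trv_H = f$, which by the closure property yields $f \in \mch^*$. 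The main obstacle is the finitization step: this is where the Littlestone (not merely VC) hypothesis on $\mch^*$ is cleanly used, through the dual VC dimension. A more elegant packaging would combine the minimax and finitization into a single boosting / ``Littlestone Minimax'' step in the spirit of \cite{Malliaris-Moran}, and given the methods advertised in the abstract this is likely the route the paper actually takes.
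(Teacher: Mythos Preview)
Your opening moves are correct and match the paper exactly: the closure observation that every $\epsilon$-hypothesis of $\mch^*$ already lands in $\mch^*$ (the paper's Claim~\ref{claim3a}), and the consequence that an $\epsilon$-type of $\mch^*$ is simply a function finitely realizable in $\mch^*$ (first line of the paper's Claim~\ref{claim4a}). You also correctly predict the paper's route: it invokes the Littlestone Minimax Theorem of Hanneke--Livni--Moran as a black box. Since finite realizability gives zero error on every \emph{finitely supported} measure, condition (A) of that theorem holds for every $\epsilon_L>0$, so (B) yields directly a finite $\epsilon$-hypothesis $H$ of $\mch^*$ with $\trv_H=f$. That is the entire proof; your steps 3--5 are not needed.

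Your own route through steps 3--5 is an attempt to \emph{reprove} Littlestone Minimax, and step 4 has a genuine gap. The phrase ``weak-$\ast$ limits of the learner's optimal strategies on an increasing sequence of finite subgames'' does not furnish a measure on $\mch^*$: weak-$\ast$ compactness lives on $\overline{\mch^*}\subseteq\{0,1\}^X$, and nothing prevents mass from concentrating on limit points outside $\mch^*$, which would make your sampling in step 5 produce hypotheses not in $\mch^*$. More tellingly, your argument as written uses only $\vc(\mch^*)<\infty$ (step 3) and $\vc(\dual(\mch^*))<\infty$ (step 5), both of which follow from $\vc(\mch^*)<\infty$ alone; step 4 invokes no dimension hypothesis at all. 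So if it worked it would prove that every finitely realizable function over any VC class is represented by some finite $\epsilon$-hypothesis --- and that is false. Take $\mch$ to be rational thresholds $\{\mathbf{1}_{(-\infty,q)}:q\in\mathbb{Q}\}$ on $X=\mathbb{Q}$: the function $f=\mathbf{1}_{(-\infty,\pi)}$ is finitely realizable, but for any finite weighted set of rational thresholds the quantity $\sum\{\gamma_i:q_i>\pi\}$ would have to be both $\geq 1-\epsilon$ (from $x\nearrow\pi$) and $<\epsilon$ (from $x\searrow\pi$). The Littlestone hypothesis is therefore used \emph{inside} the minimax step, not, as you suggest, in the dual-VC finitization.
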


We will invoke the Littlestone Minimax Theorem, 
recently proved by Hanneke, Livni and Moran, which we state here  
in the language of the present paper.\footnote{See \cite{hlm-littlestoneminmax} Corollary 7, where 
finite support is explicitly mentioned; also relevant for us are Lemma 11 and Lemma 12 
of that paper.}  

\begin{thm-lit}[Littlestone Minimax Theorem; Hanneke, Livni, Moran \cite{hlm-littlestoneminmax}] \label{t:lmm}
Suppose $\mbh$ is a Littlestone class and $\epsilon_L < 1/2$. Let 
$g: X \rightarrow \{ 0, 1 \}$ be any function. 
\\ If $(A)$ then $(B)$. 
\begin{enumerate}
\item[(A)] 
For every set $\{ (x_t, w_t) : t < s \}$ where $\{ x_t : t < s \} \subseteq X$, 
$0 < w_t \leq 1$ for each $t<s$, and $\sum_t w_t = 1$,  
there is $h \in \mbh$ so that  
\[ \sum \{ w_t : h(x_t) \neq g(x_t), t < s \} < \epsilon_L. \]
Informally, \emph{``for every finitely supported probability measure on 
$X$ there is some $h \in \mbh$ whose error against $g$ is $<\epsilon_L$.''}  
\item[(B)] 
For every $\epsilon_M$ with $\epsilon_L < \epsilon_M < 1/2$, there exists a finite 
weighted subset $\{ (h_i, \gamma_i) : i < m \}$ of $\mbh$ 
such that: for every $x \in X$, 
\[ \sum \{ \gamma_i : h_i(x) = g(x) \} \geq 1-\epsilon_M. \]
In our language, \emph{``there is an $\epsilon_M$-hypothesis $H$ of $\mbh$ 
representing $g$.''}
\end{enumerate}
\end{thm-lit}\label{thm:littlestone-minimax}

We work towards Theorem \ref{t:saturation1}. 

\begin{claim} \label{claim1a} 
Assume $\mch_\infty(\epsilon)$ is Littlestone. For any $n<\omega$,  
if $g: X \rightarrow \{ 0, 1 \}$ is finitely realized in $\mch_n(\epsilon)$, then 
$g \in \mch_{n+1}(\epsilon)$.
\end{claim}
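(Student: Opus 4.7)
\textbf{Plan of proof for Claim \ref{claim1a}.} The strategy is a direct application of the Littlestone Minimax Theorem (Theorem \ref{thm:littlestone-minimax}) to the class $\mbh := \mch_n(\epsilon)$ and the target function $g$. First I would observe that $\mch_n(\epsilon)\subseteq \mch_\infty(\epsilon)$, so by monotonicity of Littlestone dimension, $\mch_n(\epsilon)$ is itself a Littlestone class; this gives the ambient hypothesis needed to invoke the theorem.

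Next I would verify condition (A) of \ref{thm:littlestone-minimax} for $\mbh = \mch_n(\epsilon)$, $g$, and any choice of $\epsilon_L$ with $0<\epsilon_L<\epsilon$ (say $\epsilon_L = \epsilon/3$). Given any finitely supported probability measure $\{(x_t,w_t):t<s\}$ on $X$, set $Y := \{x_t : t<s\}$. Since $g$ is finitely realized in $\mch_n(\epsilon)$, there exists $h\in\mch_n(\epsilon)$ with $h\rstr Y = g\rstr Y$; so the weighted error $\sum\{w_t : h(x_t)\neq g(x_t)\}$ equals $0$, which is certainly less than $\epsilon_L$. Hence (A) holds.

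Now I would apply conclusion (B) with some $\epsilon_M$ chosen strictly between $\epsilon_L$ and $\epsilon$ (for instance $\epsilon_M = 2\epsilon/3$). This produces a finite weighted subset $H := \{(h_i,\gamma_i) : i<m\}$ of $\mch_n(\epsilon)$ such that for every $x\in X$,
\[ \sum\{\gamma_i : h_i(x)=g(x)\} \geq 1-\epsilon_M > 1-\epsilon. \]
Equivalently, $\sum\{\gamma_i : h_i(x)\neq g(x)\} < \epsilon$, so setting $\trv_H(x) := g(x)$ shows that $H$ satisfies Definition \ref{d:eps-hypothesis}: it is an $\epsilon$-hypothesis of $\mch_n(\epsilon)$ and represents $g$. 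By Definition \ref{d:epsilon-saturation-H}(2), $g = \trv_H \in \mch_{n+1}(\epsilon)$, as desired.

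The only subtlety I anticipate is bookkeeping around the strict-versus-nonstrict inequalities in Definition \ref{d:eps-hypothesis}: the theorem as stated delivers a bound $\geq 1-\epsilon_M$, so one must take $\epsilon_M$ strictly smaller than $\epsilon$ in order to recover the strict inequality $\sum\{\gamma_i : h_i(x) = 1-\trv\} < \epsilon$ required of an $\epsilon$-hypothesis. The gap $\epsilon_L < \epsilon_M < \epsilon$ afforded by $\epsilon < 1/2$ makes this entirely harmless. No other genuine obstacle arises, since (A) is witnessed trivially by exact realization on each finite set.
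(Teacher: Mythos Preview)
Your proposal is correct and follows essentially the same approach as the paper: apply Littlestone Minimax to $\mch_n(\epsilon)$, verify condition (A) trivially via exact finite realization (zero error), and read off from (B) an $\epsilon$-hypothesis representing $g$. The only cosmetic difference is that the paper simply takes $\epsilon_M = \epsilon$ (since Definition~\ref{d:eps-hypothesis} requires only $\geq 1-\epsilon$ on the majority side), whereas you choose $\epsilon_M < \epsilon$ out of extra caution; your added remark that $\mch_n(\epsilon)\subseteq\mch_\infty(\epsilon)$ is Littlestone is a helpful point the paper leaves implicit.
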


\begin{proof} 
In this case we can satisfy condition (A) of Littlestone Minimax for 
any $\epsilon_L > 0$, simply by choosing 
$h \in\mch_{n}(\eps)$ to be the function which agrees with $g$ on $\{ x_i : i < s \}$ for some $s<\omega$.  In particular, we can 
take $\epsilon_M = \epsilon$. Let $H$ be the $\epsilon$-hypothesis given by condition (B) where the $h_i$s belong to $\mch_n(\epsilon)$.  
Then $\trv_H$ belongs to $\mch_{n+1}(\epsilon)$, or in other words, 
$g \in \mch_{n+1}(\epsilon)$.  
\end{proof}

\begin{claim} \label{claim2a} 
Assume $\mch_\infty(\epsilon)$ is Littlestone. For any $n<\omega$, 
if $\trv : X \rightarrow \{ 0, 1 \}$ is an $\epsilon$-type of $\mch_n(\epsilon)$, 
then $\trv$ is realized in $\mch_{n+2}(\epsilon)$. 
\end{claim}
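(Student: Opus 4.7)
The plan is a short reduction to Claim \ref{claim1a}: essentially, being an $\epsilon$-type of $\mch_n(\epsilon)$ already implies finite realizability in $\mch_{n+1}(\epsilon)$, and then one application of Claim \ref{claim1a} at the next level finishes the job.

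More precisely, my first step would be to unfold Definition \ref{d:types-H}: to say that $\trv$ is an $\epsilon$-type of $\mch_n(\epsilon)$ means that for every finite $Y \subseteq X$ there exists an $\epsilon$-hypothesis $H$ of $\mch_n(\epsilon)$ with $\trv_H \rstr Y = \trv \rstr Y$. By clause (2) of Definition \ref{d:epsilon-saturation-H}, every such function $\trv_H$ lies in $\mch_{n+1}(\epsilon)$. Therefore $\trv$ is finitely realized in $\mch_{n+1}(\epsilon)$: for each finite $Y$, we have exhibited a member of $\mch_{n+1}(\epsilon)$ agreeing with $\trv$ on $Y$.

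My second step is then to apply Claim \ref{claim1a} with $n$ there replaced by our $n+1$. This application is legitimate since the standing assumption of the present claim is precisely that $\mch_\infty(\epsilon)$ is Littlestone, which is the hypothesis of Claim \ref{claim1a}. The conclusion of Claim \ref{claim1a} at level $n+1$ is that any function finitely realized in $\mch_{n+1}(\epsilon)$ belongs to $\mch_{n+2}(\epsilon)$, and applying this to $\trv$ yields $\trv \in \mch_{n+2}(\epsilon)$, as required.

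I do not expect a serious obstacle: all the substantive work, namely the invocation of the Littlestone Minimax Theorem to convert finite realizability into a single $\epsilon$-hypothesis, has already been carried out in Claim \ref{claim1a}. The argument here is purely a chase through the definitions plus one citation, so the only thing to be careful about is not losing a level in the indexing (hence the jump from $n$ to $n+2$ rather than $n+1$).
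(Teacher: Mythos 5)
Your proposal is correct and is essentially the paper's own proof, just written out in greater detail: the paper's one-line argument likewise observes that an $\epsilon$-type of $\mch_n(\epsilon)$ is, by construction of $\mch_{n+1}(\epsilon)$, finitely realized in $\mch_{n+1}(\epsilon)$, and then cites Claim \ref{claim1a} to conclude realization in $\mch_{n+2}(\epsilon)$.
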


\begin{proof}
By construction, $\trv$ will be finitely realized in $\mch_{n+1}(\epsilon)$, hence 
realized in $\mch_{n+2}(\epsilon)$ by Claim \ref{claim1a}.  
\end{proof}

\begin{claim} \label{claim3a}
If $H$ is an $\epsilon$-hypothesis of $\mch_\infty(\epsilon)$, then $\trv_H \in \mch_\infty(\epsilon)$. 
\end{claim}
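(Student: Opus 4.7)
The plan is to exploit the fact that an $\epsilon$-hypothesis, by Definition \ref{d:eps-hypothesis}, is a \emph{finite} weighted set of hypotheses, together with the fact that $\mch_\infty(\epsilon) = \bigcup_{n < \omega} \mch_n(\epsilon)$ is an increasing union. Both ingredients are already present in the definitions, so the argument should be a short direct appeal to the inductive construction of $\mch_\infty(\epsilon)$, with no need for Littlestone Minimax or any of the new machinery of the section.

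First I would write $H = \{ (h_i, \gamma_i) : i < k \}$ with $k \in \mathbb{N}$, and observe that since each $h_i \in \mch_\infty(\epsilon)$, there exists $n_i < \omega$ with $h_i \in \mch_{n_i}(\epsilon)$. Set $n := \max \{ n_i : i < k \}$, which exists and is finite because $k$ is finite. By the monotonicity of the chain (i.e.\ $\mch_m(\epsilon) \subseteq \mch_{m+1}(\epsilon)$ for every $m$, immediate from clause (2) of Definition \ref{d:epsilon-saturation-H}), we have $h_i \in \mch_n(\epsilon)$ for every $i < k$.

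Next I would verify that $H$, now viewed as a weighted set in $\mch_n(\epsilon)$, continues to satisfy clauses (1)--(3) of Definition \ref{d:eps-hypothesis}; but this is immediate, since those clauses depend only on the functions $h_i : X \to \{0,1\}$ and the weights $\gamma_i$, not on the ambient class. Therefore $H$ is an $\epsilon$-hypothesis of $\mch_n(\epsilon)$, and by clause (2) of Definition \ref{d:epsilon-saturation-H} applied at stage $n$, the function $\trv_H$ belongs to $\mch_{n+1}(\epsilon) \subseteq \mch_\infty(\epsilon)$.

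Since there is essentially nothing to obstruct, the only step requiring care is the observation that the maximum $n$ exists because $H$ is finite; this is exactly the place where the ``good sets are finite'' convention of \S\ref{s:saturation-H} is used. Note that this claim is why we can reasonably call $\mch_\infty(\epsilon)$ a closure under aggregation: iterating the aggregation operation beyond stage $\omega$ yields nothing new.
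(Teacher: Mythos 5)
Your proof is correct and takes essentially the same approach as the paper: use finiteness of $H$ to locate a stage $n$ with all $h_i \in \mch_n(\epsilon)$, observe that $H$ remains an $\epsilon$-hypothesis of $\mch_n(\epsilon)$ since goodness depends only on the functions and weights, and conclude $\trv_H \in \mch_{n+1}(\epsilon) \subseteq \mch_\infty(\epsilon)$.
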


\begin{proof}
Suppose $H = \{ (h_i, \gamma_i) : i < n \}$. Since it only involves finitely many $h_i$'s, let
$m < \omega$ be such that $\{ h_i : i < n \} \subseteq \mch_m(\epsilon)$.  Clearly $H$ remains an 
$\epsilon$-hypothesis of $\mch_m$, hence $\trv_H \in \mch_{m+1}(\epsilon) \subseteq \mch_\infty(\epsilon)$. 
\end{proof} 

\begin{claim} \label{claim4a}
Assume $\mch_\infty(\epsilon)$ is Littlestone. If $\trv$ is an $\epsilon$-type of $\mch_\infty(\epsilon)$, 
then $\trv$ is finitely realized in $\mch_\infty(\epsilon)$, hence realized in $\mch_\infty(\epsilon)$. 
\end{claim}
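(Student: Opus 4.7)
The plan is to split Claim \ref{claim4a} into exactly the two halves the statement suggests, and to show that each half reduces immediately to something already in hand: the first to Claim \ref{claim3a}, the second to the Littlestone Minimax Theorem (Theorem \ref{t:lmm}) plus Claim \ref{claim3a} again.

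First I would handle finite realizability. Fix an $\epsilon$-type $\trv$ of $\mch_\infty(\epsilon)$ and a finite $Y \subseteq X$. By the definition \ref{d:types-H} of $\epsilon$-type, there is an $\epsilon$-hypothesis $H$ of $\mch_\infty(\epsilon)$ with $\trv_H \rstr Y = \trv \rstr Y$. The $h_i$'s in $H$ form a finite subset of $\mch_\infty(\epsilon) = \bigcup_n \mch_n(\epsilon)$, so they all live in some $\mch_m(\epsilon)$; thus $H$ is already an $\epsilon$-hypothesis of $\mch_m(\epsilon)$ and, by Claim \ref{claim3a} (or directly by construction of $\mch_{m+1}(\epsilon)$), the function $\trv_H$ lies in $\mch_{m+1}(\epsilon) \subseteq \mch_\infty(\epsilon)$. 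This exhibits an element of $\mch_\infty(\epsilon)$ agreeing with $\trv$ on $Y$, so $\trv$ is finitely realized in $\mch_\infty(\epsilon)$.

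Next I would upgrade finite realizability to actual membership. Since $\mch_\infty(\epsilon)$ is Littlestone by hypothesis, I can apply Theorem \ref{t:lmm} with $\mbh = \mch_\infty(\epsilon)$ and $g = \trv$. To verify condition (A), pick any finitely supported probability measure $\{(x_t, w_t) : t<s\}$ on $X$: finite realizability supplies some $h \in \mch_\infty(\epsilon)$ with $h(x_t) = \trv(x_t)$ for every $t<s$, making the weighted error equal to $0$, which is $<\epsilon_L$ for any $\epsilon_L>0$. So condition (A) holds for every $\epsilon_L \in (0,\epsilon)$, and condition (B) of Theorem \ref{t:lmm} then produces, with $\epsilon_M = \epsilon$, an $\epsilon$-hypothesis $H^*$ of $\mch_\infty(\epsilon)$ with $\trv_{H^*} = \trv$. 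A final application of Claim \ref{claim3a} gives $\trv = \trv_{H^*} \in \mch_\infty(\epsilon)$, completing the proof.

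The only step requiring any real thought is the verification of hypothesis (A) of the Littlestone Minimax Theorem, and the key observation there is just that finite realizability gives exact agreement on an arbitrary finite support, which trivially beats any positive error threshold $\epsilon_L$; the subtlety in the theorem statement is the strict inequality $\epsilon_L < \epsilon_M$, but this is harmless since we have (A) for \emph{all} $\epsilon_L > 0$. So I do not expect a serious obstacle: the Littlestone hypothesis on $\mch_\infty(\epsilon)$ is invoked exactly once (to license Theorem \ref{t:lmm}), and Claim \ref{claim3a} is invoked exactly twice (to keep us inside $\mch_\infty(\epsilon)$ at both stages).
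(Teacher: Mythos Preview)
Your proposal is correct and follows essentially the same approach as the paper's own proof: both split the argument into (i) using Claim~\ref{claim3a} to pass from ``finitely realized by $\epsilon$-hypotheses'' to ``finitely realized in $\mch_\infty(\epsilon)$,'' and (ii) invoking Littlestone Minimax with the observation that finite realizability yields zero error on any finite support (so condition (A) holds for every $\epsilon_L>0$), then applying Claim~\ref{claim3a} once more to land in $\mch_\infty(\epsilon)$. Your write-up is slightly more explicit about the intermediate index $m$, but the structure and key ideas are identical.
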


\begin{proof} 
By definition of $\epsilon$-type, $\trv$ is finitely realized by $\epsilon$-hypotheses in $\mch_\infty(\epsilon)$.  
So by Claim \ref{claim3a} it is finitely realized in $\mch_\infty(\epsilon)$. In general, let $g: \mch_\infty(\epsilon) \rightarrow \{ 0, 1 \}$ 
be any function which is finitely realized in $\mch_\infty(\epsilon)$. 
Then Littlestone Minimax (A) is always satisfied (for any $0 < \epsilon_L < \epsilon_M$, in particular for $\epsilon_M = \epsilon$, 
since in condition (A) we can  
find $h$ which agrees exactly with $g$ on the given finite domain, hence has zero error whatever the weights). 
So there is an $\epsilon$-hypothesis $H$ of 
$\mch_\infty(\epsilon)$ so that $\trv_H$ represents $g$. By Claim \ref{claim3a}, $\trv_H \in \mch_\infty(\epsilon)$.  
In particular, $\trv$ is realized.  
\end{proof}

\begin{proof}[Proof of Theorem \ref{t:saturation1}] 
Follows from Claim \ref{claim4a}.
\end{proof}

In the next lemma, the intent is that 
all the classes are over the same set $X$.
\begin{lemma} \label{claim5a}
Suppose $\mch_\infty(\epsilon)$ is Littlestone. 
Suppose $\mch \subseteq \mbh$. If $\mbh$ is $\epsilon$-saturated then $\mch_\infty(\epsilon) \subseteq \mbh$.  
Hence $\mch_\infty(\epsilon)$ 
is the smallest $($under inclusion$)$ 
$\epsilon$-saturated hypothesis class containing $\mch$.\footnote{There can of course be  
larger $\epsilon$-saturated classes containing a given $\mch$, as we can increase 
the class before taking the saturation.} 
\end{lemma}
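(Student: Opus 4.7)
The plan is to prove $\mch_n(\epsilon) \subseteq \mbh$ by induction on $n < \omega$; then taking the union gives $\mch_\infty(\epsilon) \subseteq \mbh$. The base case $n=0$ is the hypothesis $\mch \subseteq \mbh$. For the inductive step, assume $\mch_n(\epsilon) \subseteq \mbh$ and take $f \in \mch_{n+1}(\epsilon)$. If $f \in \mch_n(\epsilon)$ we are done, so suppose $f = \trv_H$ for some $\epsilon$-hypothesis $H = \{(h_i, \gamma_i) : i < k\}$ of $\mch_n(\epsilon)$.

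The key observation is that because $\{h_i : i < k\} \subseteq \mch_n(\epsilon) \subseteq \mbh$ and the weights $\gamma_i$ and the set $X$ do not change, $H$ is also a (perfectly valid) $\epsilon$-hypothesis of $\mbh$, and the function $\trv_H : X \to \{0,1\}$ computed from $H$ is the same regardless of the ambient class. Therefore $\trv_H$ lies in the class $\{\trv_{H'} : H' \text{ is an $\epsilon$-hypothesis of } \mbh\}$, and in particular $\trv_H$ is finitely realized in this class by itself, so by Definition \ref{d:types-H}, $\trv_H$ is an $\epsilon$-type of $\mbh$. Since $\mbh$ is $\epsilon$-saturated, every $\epsilon$-type of $\mbh$ is realized in $\mbh$, so $f = \trv_H \in \mbh$. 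This closes the induction.

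For the concluding ``smallest'' assertion, we combine the inclusion just proved with Theorem \ref{t:saturation1}: under the standing assumption that $\mch_\infty(\epsilon)$ is Littlestone, $\mch_\infty(\epsilon)$ is itself $\epsilon$-saturated and obviously contains $\mch$, so among all $\epsilon$-saturated classes over $X$ containing $\mch$, it is minimal under inclusion.

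I do not expect a serious obstacle here; the argument is essentially unwinding definitions on top of Theorem \ref{t:saturation1}. The one mild point to get right is the observation that an $\epsilon$-hypothesis is defined purely in terms of its constituent functions and weights, so enlarging the ambient class from $\mch_n(\epsilon)$ to $\mbh$ preserves both the status of $H$ as an $\epsilon$-hypothesis and the associated truth-value function $\trv_H$. Once that is stated, the induction and the appeal to saturation are immediate.
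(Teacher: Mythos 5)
Your proof is correct and takes essentially the same approach as the paper. The paper's version introduces auxiliary injections $F_n:\mch_n(\epsilon)\to\mbh$ together with a condition forcing each $F_n$ to be the identity (as maps between functions on $X$), so its induction reduces exactly to your induction showing $\mch_n(\epsilon)\subseteq\mbh$, with the same use of the $\epsilon$-saturation of $\mbh$ at each step and the same appeal to Theorem~\ref{t:saturation1} for minimality.
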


\begin{proof}
By induction on $n<\omega$ we build an increasing sequence of functions $\langle F_n : n < \omega \rangle$ such that: 
\begin{enumerate}
    \item[(a)] for $n<\omega$, $F_n$ is an injective map from $\mch_n(\epsilon)$ into $\mbh$, 
    \item[(b)] for $n<m < \omega$, $F_m \rstr \mch_n(\epsilon) = F_n$, and
    \item[(c)] for $n<\omega$, for every $x \in X$ and 
$h \in \dom(F_n)$, $h(x) = 1$ if and only if 
$(F_n(h))(x) = 1$.
\end{enumerate}
[Since we identify two functions which are the same on $X$, condition (c) amounts to saying the map will be the identity.]  
For $n=0$, let $F_0$ be the identity as $\mch \subseteq \mbh$. Consider $n+1$. Enumerate the elements of $\mch_{n+1}(\epsilon) \setminus \mch_n$ by the appropriate cardinal $\alpha$ as $\langle h_\beta : \beta < \alpha \rangle$. The functions 
$h_\beta$ in this list are necessarily distinct functions, and, 
by definition of $\mch_{n+1}$, they are distinct from the elements of $\mch_n$.  
It suffices to specify 
$F_{n+1}$ on each $h_\beta$ individually, so fix $h_\beta$. 
By definition of $\mch_{n+1}(\eps)$, we have $h_\beta = \trv_H$ for some finite weighted $\epsilon$-good set  
$H = \{ (g_i, \gamma_i) : i < k \}$ where the $g_i$ belong to $\mch_n(\eps)$, but $\trv_H \notin \mch_n$. By condition (c), $Y = \{ (F_n(g_i), \gamma_i) : i < k \}$ is also a weighted $\epsilon$-good set, where the $F_n(g_i)$ belong to $\mbh$, and $Y$ represents the same function on $X$ as $H$. So $g = \trv_H = \trv_Y$ is realized in the class $\{ \trv_Y : Y $ is an $\epsilon$-hypothesis of $\mbh$ $\}$, 
hence it is a fortiori finitely realized in this class,
hence $g$ is itself a type of $\mbh$, 
hence it is realized by some (and indeed exactly one) $f \in \mbh$ by the assumption of saturation. Note that $f \notin \operatorname{range}(F_n)$ because we assumed that $\trv_H \notin \mch_n$ and $F_n$ satisfied condition (c). 
Set $F_{n+1}(h_\beta) = f$.   
\end{proof}

\section{Intermediate $\epsilon$} \label{s:intermediate-epsilon}

We continue with the assumption that $\mch$ is a Littlestone class, of $\ldim$ $\ell$ and $\vc$ dimension $d$. In \ref{sat:hypothesis} we saw that if $\epsilon$ is sufficiently small as a function of $d$, then the $\vc$-dimension of $\mch_\infty$ does not change, and if 
$\epsilon$ is sufficiently small as a function of $\ell$, then the $\ldim$ of $\mch_\infty$ also does not change.  
In this section we prove the a priori quite surprising theorem that if $\epsilon$ is bounded as a function of $d$ (here we do not refer to  $\ell$) then $\mch_\infty$ remains a Littlestone class possibly 
of larger $\ldim$. An upper bound on this larger $\ldim$ can be obtained from the proof as we will discuss at the end of the section. 

In the next theorem, for definiteness we could have said ``For any $C \geq 13$'' but we give an approximate value, whose 
calculation is justified in \ref{C:approx}.  

\begin{theorem}\label{thm:littlestone-Cd}
    There exists a constant $C\approx 12.0412$ that satisfies the following. 
Let $\mch$ be a Littlestone class, with $\ldim(\mch)=\ell$ and $\vc dim(\mch) = d$. If $0<\eps<\frac{1}{Cd}$, then $\mch_{\infty}(\eps)$ is Littlestone. 

    More precisely: given $d$, we are free to choose:

    \begin{enumerate}
    \item any constant $\mathbf{C} \geq C$
    \item based on $\mathbf{C}$, any $\epsilon_1 \in (0,1/2)_{\mathbb{R}}$ 
    such that $\mathbf{C}d>20\cdot(d/\eps_{1})\cdot\log(1/\eps_{1})$
    \item based on $\eps_{1}$, any $\eps_{2}\in(0,1/2)_{\mathbb{R}}$ such that $\eps_{1}<\eps_{2}$
    \item based on $\eps_{2}$, any $\eps_{3}\in(0,1/2 - \eps_{2})_{\mathbb{R}}$

    \end{enumerate}
Then, the Littlestone dimension of the class $\mch_{\infty}(\eps)$ is upper bounded in terms of 
$d/(\eps_{3})^{2}$ and the original Littlestone dimension of $\mch$, i.e. $\ell$.
\end{theorem}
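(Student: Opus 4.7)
The plan is to bound the threshold dimension of $\mch_\infty(\epsilon)$ and invoke Conclusion \ref{hodges-learning} (Hodges--Shelah) to convert this into a bound on Littlestone dimension. The key input we get for free is that VC dimension is preserved: since $\epsilon < 1/(\mathbf{C}d) \leq 1/(d+1)$, Theorem \ref{sat:hypothesis}(4) gives $\vc(\mch_\infty(\epsilon)) = d$, and in particular every subclass $\mch_n(\epsilon)$ has VC dimension at most $d$. This preserved VC dimension supplies the quantitative statistical-learning tools ($\epsilon$-nets and $\epsilon$-approximations) that drive the rest of the argument.

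Suppose toward contradiction that $(x_i, f_i)_{i<k}$ witnesses a half-graph of length $k$ in $\mch_\infty(\epsilon)$. For each $i$ choose minimal $n_i$ with $f_i \in \mch_{n_i+1}(\epsilon)$ and write $f_i = \trv_{H_i}$ for an $\epsilon$-hypothesis $H_i$ of $\mch_{n_i}(\epsilon)$. The main step is to derandomize $H_i$: view $H_i$ as a probability measure on the VC class $\mch_{n_i}(\epsilon)$ and apply a Vapnik--Chervonenkis $\epsilon_3$-approximation theorem, so that a sample $S_i$ of size $m = O(d/\epsilon_3^2)$ drawn i.i.d.\ from $H_i$ is, with positive probability, $\epsilon_3$-close in empirical frequency to $H_i$ on each event $\{h : h(x_t)=1\}$, $t<k$. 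Since $H_i$ is $\epsilon$-good the true frequency lies in $[0,\epsilon) \cup (1-\epsilon,1]$, and since $\epsilon + \epsilon_3 \leq \epsilon_2 + \epsilon_3 < 1/2$ by the parameter choices, the empirical majority $\Maj(S_i)$ agrees with $f_i$ on every $x_t$. The constant $\mathbf{C}$ is calibrated so that an $\epsilon_1$-net of size $\approx 20(d/\epsilon_1)\log(1/\epsilon_1) \leq \mathbf{C}d$ fits strictly below $1/\epsilon$, which is the slack needed to run the VC sampling while staying inside the $\epsilon$-good regime.

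With each $f_i$ on $\{x_1,\ldots,x_k\}$ realized as the majority of $m$ elements of $\mch_{n_i}(\epsilon) \subseteq \mch_\infty(\epsilon)$, one iterates the derandomization downward through the levels $\mch_{n_i-1}, \mch_{n_i-2}, \ldots, \mch_0 = \mch$. Unfolding yields, on the half-graph points, a representation of each $f_i$ as a shallow nested majority of boundedly many functions from $\mch$. The resulting half-graph of length $k$ then embeds into a ``bagged'' class $\Maj_M(\mch)$ whose Littlestone dimension is polynomial in $M$ and $\ell$ by standard boosting/bagging bounds for Littlestone classes. Hodges--Shelah then bounds the threshold dimension, and hence $k$, by a function of $d/\epsilon_3^2$ and $\ell$. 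Combining with Conclusion \ref{hodges-learning}(1) applied to $\mch_\infty(\epsilon)$ gives the desired bound on $\ldim(\mch_\infty(\epsilon))$.

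\emph{Main obstacle.} The most delicate step is the iterative descent through the layers, since naive iteration multiplies the sample size at each level while the $n_i$ can be unbounded. The correct strategy is to apply the VC sampling \emph{directly} inside $\mch_\infty(\epsilon)$ (using that $\mch_\infty(\epsilon)$ itself has VC dimension $d$), arranging matters so that the resulting nested majority over $\mch$ has depth bounded only in terms of $d$ and $\epsilon_3$; alternatively, a careful accounting via the Littlestone Minimax Theorem applied at each level collapses the multi-level representation into a uniformly bounded one. Extracting the exact $d/\epsilon_3^2$ quantitative bound depends on the precise form of the available bagging bounds for Littlestone classes, which is where the heaviest calculations will live.
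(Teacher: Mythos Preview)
Your proposal has the right high-level ingredients (preserved VC dimension, sampling to get bounded-size majorities, closure of Littlestone under $\Maj_p$), but the core gap is exactly the one you flag as the ``main obstacle,'' and you do not actually close it. Sampling from $H_i$ only lands you in $\mch_{n_i}(\epsilon)$, not in $\mch$; iterating multiplies the representation size at each level while $n_i$ is unbounded, so ``nested majority of bounded depth'' does not follow from anything you've written. Your two suggested fixes are not proofs: ``apply the VC sampling directly inside $\mch_\infty(\epsilon)$'' still produces elements of $\mch_\infty(\epsilon)$, not of $\mch$, and ``careful accounting via Littlestone Minimax at each level'' does not collapse the depth without a new idea.

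The paper avoids the descent entirely, and the mechanism is the step you are missing. First (Claim~\ref{claim:k-realizability-backtrack}), a simple union bound shows that for $k<\lfloor 1/\epsilon\rfloor$ every $f\in\mch_\infty(\epsilon)$ is $k$-realizable \emph{in $\mch$ itself}: given $k$ points, peel back the layers one at a time, losing weight $<\epsilon$ per point, so some element of the previous layer agrees on all $k$ points; iterate to $\mch_0=\mch$. Since $\epsilon<1/(Cd)$ this gives $\mch_\infty(\epsilon)\subseteq\mch^{\Cdr}$. Now the role of $C$ and $\epsilon_1$ is not ``slack for VC sampling'' as you wrote, but rather: an $\epsilon_1$-net for any finitely supported distribution on $X$ (with respect to the symmetric-difference class $\mch_f$, which still has VC dimension $d$) has size at most $20(d/\epsilon_1)\log(1/\epsilon_1)<Cd$, so $Cd$-realizability of $f$ in $\mch$ produces an $h\in\mch$ agreeing with $f$ on the entire net, hence with error $<\epsilon_1$ under that distribution (Claim~\ref{claim:epsilon-net-one-hyp}). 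This is precisely hypothesis~(A) of Littlestone Minimax for the class $\mch$, so (B) gives a single weighted $\epsilon_2$-majority over $\mch$ representing $f$ globally (Claim~\ref{claim:eps-hypothesis-via-littlestone-minimax}). Only then does the paper subsample via uniform convergence on the dual to get a uniform $p=O(d_*/\epsilon_3^2)$, yielding $\mch_\infty(\epsilon)\subseteq\Maj_p(\mch)$ and finishing by Fact~\ref{fact:maj-votes}. The Hodges--Shelah detour through threshold dimension is unnecessary and would cost you an exponential in the final bound.
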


In the statement, ``20'' in item (2) is an upper bound for the constant in the $\epsilon$-net theorem, and the function bounding the final Littlestone dimension depends on the first part of the proof of Theorem 3 in \cite{alon20} (bounding Littlestone dimension of fixed finite boolean combinations of elements of a Littlestone class) as discussed at the end of the section. 

Before giving the proof, we need to prove several claims. 
First, we show that the functions in $\mch_{\infty}(\eps)$ are all $k$-realizable, as long as $k$ is not too large in terms of $\eps$. 
(Compare \ref{sat:hypothesis} and Example \ref{example12} below.) 
\begin{claim}\label{claim:k-realizability-backtrack}
    Let $k<\lfloor 1/\eps\rfloor$. Then 
    $\mch_{\infty}(\eps)\subseteq\mch^{\kr}$.
\end{claim}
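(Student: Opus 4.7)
The plan is to prove the containment $\mch_\infty(\epsilon) \subseteq \mch^{\kr}$ by induction on $n$, showing that $\mch_n(\epsilon) \subseteq \mch^{\kr}$ for every $n < \omega$. The base case $n=0$ is immediate from $\mch_0(\epsilon) = \mch$.

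For the inductive step, suppose $\mch_n(\epsilon) \subseteq \mch^{\kr}$ and fix $f \in \mch_{n+1}(\epsilon)$. If $f \in \mch_n(\epsilon)$ we are done by induction, so assume $f = \trv_H$ for some $\epsilon$-hypothesis $H = \{(h_i, \gamma_i) : i < m\}$ of $\mch_n(\epsilon)$. Given any $Y \subseteq X$ with $|Y| \leq k$, the task is to produce $h \in \mch$ with $h \rstr Y = f \rstr Y$. By $\epsilon$-goodness of $H$, for each $y \in Y$ we have $\sum \{ \gamma_i : h_i(y) \neq f(y) \} < \epsilon$. Summing over the at most $k$ elements of $Y$ and swapping the order of summation yields
\[ \sum_{i < m} \gamma_i \cdot \bigl|\{ y \in Y : h_i(y) \neq f(y)\}\bigr| < k \epsilon. \]
By the hypothesis $k < \lfloor 1/\epsilon \rfloor \leq 1/\epsilon$, so $k\epsilon < 1$. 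Since the left-hand side is a convex combination of non-negative integers that is strictly less than $1$, some index $i^\ast$ must satisfy $|\{ y \in Y : h_{i^\ast}(y) \neq f(y)\}| = 0$, i.e. $h_{i^\ast} \rstr Y = f \rstr Y$.

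Now $h_{i^\ast} \in \mch_n(\epsilon)$, so by induction $h_{i^\ast} \in \mch^{\kr}$; applying $k$-realizability to $Y$ itself produces some $h \in \mch$ with $h \rstr Y = h_{i^\ast} \rstr Y = f \rstr Y$, as required. Since $\mch_\infty(\epsilon) = \bigcup_n \mch_n(\epsilon)$ and any $f \in \mch_\infty(\epsilon)$ lies in some $\mch_n(\epsilon)$, the containment follows.

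There is no serious obstacle here: the proof is essentially a single averaging/union-bound argument threaded through an induction. The only delicate point is ensuring that the strict inequality $\sum_i \gamma_i |\{ y \in Y : h_i(y) \neq f(y)\}| < 1$ is preserved, which is why the hypothesis is stated with $k < \lfloor 1/\epsilon \rfloor$ (so that the strict inequality $k\epsilon < 1$ holds and, the summands being integers, at least one is forced to vanish). This is also the precise reason the statement cannot be pushed further: once $k\epsilon \geq 1$, the averaging no longer forces exact agreement of even one constituent $h_i$ on $Y$.
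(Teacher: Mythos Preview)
Your proof is correct and follows essentially the same approach as the paper: both use the union-bound/averaging argument to find, inside any $\epsilon$-hypothesis representing $f$, a single constituent $h_{i^\ast}$ agreeing with $f$ on the given $k$-element set, and then step back one level. The only cosmetic difference is that the paper phrases the induction as a minimal-counterexample argument (choose the least $m$ with $h\in\mch_m(\epsilon)$ and derive a contradiction) while you run a direct forward induction and explicitly invoke the inductive hypothesis on $h_{i^\ast}$; the content is identical.
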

\begin{proof}
The claim asserts that any $h \in \mch_\infty(\epsilon)$ is already 
$k$-realizable in $\mch$. If $h \in \mch_\infty(\epsilon)$ then 
we can choose a minimal $m$ so that $h \in \mch_m(\epsilon)$. 
Clearly $h$ is $k$-realizable in $\mch_m$, so if $m=0$ we are done. 
Otherwise, suppose for a contradiction that $m = n+1$ and 
$h$ is $k$-realizable in $\mch_{n+1}$ but not in $\mch_n$.   

As $h \in \mch_{n+1}$ there is a weighted $\epsilon$-good set $H = \{ (g_i, \gamma_i) : i < s \}$
which represents $h$, hence for each $x \in X$, $\sum \{ \gamma_i : i<s, g_i(x) \neq h(x) \} < \epsilon$. 
Hence given any $x_0, \dots, x_{k-1}$ from $X$, 
we have that $\sum \{ \gamma_i : i < s, j<k, g_i(x_j) \neq h(x_j) \} < \epsilon k \leq 1$ (by choice 
of $\epsilon$). 
So there exists $(g,\gamma) \in H$ such that $g(x_j) = h(x_j)$ for $j=0, \dots, k-1$. 
By definition $g \in \mch_n$. This shows $h$ is $k$-realizable in $\mch_n$, contradicting our 
choice of $n$.   
\end{proof}

In the following claim, we fix $C$ and $\epsilon$ from Theorem \ref{thm:littlestone-Cd}. Note that this background $\epsilon$ is not used in this claim. 

The claim analyzes a single function $f$ from $\mch^\kr$ in the case where $k = Cd$.  Recall that a priori we do not know the complexity of $\mch^{\Cdr}$, but we do know that $\mch$ is Littlestone and indeed of $\vc$ dimension $d$. Let $\mch_f$ denote the class of symmetric differences of $f$ with each element of $\mch$, which remains a $\vc$ class of dimension $d$. Given any measure (we will only use finitely supported probability measures) $\mu$ on $X$ and $h_f \in \mch$, let $\mu(h_f)$ mean $\mu ( \{ x \in X : h_f(x) = 1 \})$. There are two key ideas. First, since $\mch_f$ is a $\vc$ class, the $\epsilon$-net theorem [note: we should call it the $\epsilon_1$-net theorem since we are applying it to an $\epsilon_1$ we shall choose presently and not our background $\epsilon$] applies: for some $M = M(\epsilon_1, d)$, for any measure $\mu$ on $X$ there is $S \subseteq X$, $|S| \leq M$ which ``sees'' all nontrivial hypotheses (i.e. for every $h_f$ of measure $\geq \epsilon_1$ there exists $s \in S$ so that $h_f(s) =1$). Since $\mch_f$ measures symmetric difference with $f$, this amounts to saying that if $h \in \mch$ differs from $f$ by more than $\epsilon_1$, the set $S$ sees it. 
Second, note this along with Claim \ref{claim:k-realizability-backtrack} tells us that for any choice of $\epsilon_1$ satisfying Theorem \ref{thm:littlestone-Cd}(2), since $Cd > M(\epsilon_1,d)$, when $\mu$ is any finitely supported probability measure on $X$ and $S$ is an $\epsilon_1$-net,  $Cd > |S|$ hence the assumption that $f$ is $Cd$-realizable in $\mch$ means that there is some $h \in \mch$ so that $h$ and $f$ agree on $S$, \emph{hence} the global difference between $h$ and $f$ is $<\epsilon_1$. 
%%%
%%% 
\begin{claim}\label{claim:epsilon-net-one-hyp}
    Let $\mathcal{D}$ be a distribution over $X$ with finite support, and let $\mch$ be [a Littlestone class] of $\vc dim(\mch)=d$. Let $C$ be as in Theorem \ref{thm:littlestone-Cd}. If $\eps_{1}\in(0,1/2)_{\mathbb{R}}$ satisfies the inequality $C>20\cdot\eps_{1}^{-1}\cdot \log(\eps^{-1}_{1})$ then for every $f \in \mch^{\Cdr}$, there exists $h\in\mch$ such that
    \[ Pr_{x\sim\mathcal{D}} [h(x)\neq f(x)] < \eps_{1}.\]
\end{claim}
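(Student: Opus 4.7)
The plan is to combine the $\epsilon_1$-net theorem for VC classes with the $Cd$-realizability of $f$, exactly in the manner outlined in the discussion preceding the claim.

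First, I would pass to the symmetrized class $\mch_f := \{ h \triangle f : h \in \mch \}$, viewed as a class of subsets of $X$. Since XOR with the (fixed) characteristic function of $f$ is a bijection of $2^{X}$ that preserves which subsets of $X$ are shattered, we have $\vc(\mch_f) = \vc(\mch) = d$. Under this reformulation, the desired conclusion $\Pr_{x \sim \mathcal{D}}[h(x) \neq f(x)] < \epsilon_1$ becomes the statement that the associated set $h \triangle f \in \mch_f$ has $\mathcal{D}$-measure strictly less than $\epsilon_1$.

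Second, I would apply the $\epsilon_1$-net theorem to the pair $(\mch_f, \mathcal{D})$. This produces a finite subset $S \subseteq X$ of size at most $20 \cdot (d/\epsilon_1) \cdot \log(1/\epsilon_1)$ (with $20$ being the upper bound on the net-theorem constant referenced in the statement of Theorem \ref{thm:littlestone-Cd}) such that every element of $\mch_f$ of $\mathcal{D}$-measure at least $\epsilon_1$ intersects $S$. The hypothesis $C > 20 \cdot \epsilon_1^{-1} \cdot \log(\epsilon_1^{-1})$ translates immediately to $|S| \leq Cd$. Now the $Cd$-realizability assumption $f \in \mch^{\Cdr}$ enters: applied to the finite set $S \subseteq X$ of size at most $Cd$, it yields some $h \in \mch$ with $h \rstr S = f \rstr S$, i.e.\ $h \triangle f$ is disjoint from $S$. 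By the net property, $h \triangle f$ must then have $\mathcal{D}$-measure strictly less than $\epsilon_1$, which is exactly the desired inequality.

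There is essentially no obstacle here: all three ingredients (symmetrization, the $\epsilon$-net theorem, and $Cd$-realizability) are standard or stated explicitly in the paper, and the numerology of hypothesis (2) of Theorem \ref{thm:littlestone-Cd} has been tuned precisely so that the net size fits under the realizability budget of $Cd$ sample points. The only minor care point is to invoke the $\epsilon$-net theorem (which guarantees that every set of measure $\geq \epsilon_1$ is hit) rather than the $\epsilon$-approximation theorem; the one-sided property is exactly what is needed and comes with the better sample-size bound. Note also that the Littlestone hypothesis on $\mch$ is not used in this particular claim --- only the VC bound is --- which is consistent with the theorem's statement that the intermediate regime is governed by $d$ rather than $\ell$.
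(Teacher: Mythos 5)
Your proof is correct and follows essentially the same approach as the paper: symmetrize to the class $\mch_f$ of symmetric differences (which preserves VC dimension), apply the $\epsilon_1$-net theorem to obtain a net $S$ of size at most $Cd$, use $Cd$-realizability of $f$ to find $h \in \mch$ agreeing with $f$ on $S$, and conclude via the net property. Your closing remark that only the VC bound (not the Littlestone hypothesis) is actually used here is a correct and worthwhile observation, consistent with the paper's framing.
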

\begin{proof}
        Define $\mch_{f}:=\{h_{f}: h\in\mch\}$, where for each $h\in\mch$ and $x\in X$ we define $h_{f}(x):=\delta_{f(x)\neq h(x)}$, as the symmetric difference of $h$ and $f$. Since the map $h\mapsto h_{f}$ is an involution, shattered sets are the same for both classes, and $VCdim(\mch)=VCdim(\mch_{f})=d$.\\\\
        Then, by $\eps$-net Theorem on $\mch_{f}$ (see e.g. Theorem 10.2.4 in \cite{matousek-lectures-discgeo}), there exists a constant $C_{net}<20$, and a number $M:=m(d,\eps_{1})\leq C_{\text{net}}\cdot d\cdot(1/\eps_{1})\cdot\log(1/\eps_{1})$ such that a sample $\mathcal{S}$ of size $M$ from $X\sim\mathcal{D}$, satisfies with positive probability that:
            \[\forall h_{f}\in\mch_{f},\hspace{0.5cm} Pr_{x\sim\mathcal{D}}[h_{f}(x)=1]\geq\eps_{1} \hspace{0.2cm}\Rightarrow\hspace{0.2cm} \exists s\in \mathcal{S}: h_{f}(s)=1.\]
        Following the correspondence between $\mch$ and $\mch_{f}$, the above is equivalent to
          \[\forall h\in\mathcal{H},\hspace{0.5cm} Pr_{x\sim\mathcal{D}}[h(x)\neq f(x)]\geq\eps_{1} \hspace{0.2cm}\Rightarrow\hspace{0.2cm} \exists s\in \mathcal{S}: h(s)\neq f(s).\]
        The condition on $\eps_{1}$ implies $M\leq Cd$. Let $\mathcal{S}$ be a sample with the above property. Since $|\mathcal{S}|\leq M \leq Cd$, then $f\in\mch^{\Cdr}$ implies there is some $h\in\mathcal{H}$ such that for all $x\in \mathcal{S}$, $f(x)=h(x)$. By contrapositive, then, $Pr_{x\sim\mathcal{D}}[h(x)\neq f(x)] < \eps_{1}$.
        \end{proof}
%%%
%%%

We continue to analyze a given $f \in \mch^{\Cdr}$. The next Claim applies Littlestone minimax to conclude that any such $f$ can be represented by a weighted $\epsilon_{2}$-majority vote of elements of $\mch$, for any $\eps_{2}\in(\eps_{1},1/2)_{\mathbb{R}}$. Why? Condition (A) of the Littlestone Minimax, Theorem A above, is exactly what is guaranteed by Claim \ref{claim:epsilon-net-one-hyp}. 

\begin{claim}\label{claim:eps-hypothesis-via-littlestone-minimax}
    Let $\mathcal{D}, \mch, d, C, \eps_{1}$ be defined as before. In particular, recall that $\mch$ is a Littlestone class with $VCdim(\mch)=d$.  
Fix $f \in \mch^{\Cdr}$.  
For any $\eps_{2}\in(0,1/2)$ such that $\eps_{2}>\eps_{1}$, there exists a finite weighted sequence $\{(h_{i}, \gamma_{i}) \;:\; i<n \}$ in $\mch$ such that, for every $x\in X$:
            \begin{enumerate}[label=(\roman*)]
            \item there is an $\eps_{2}$-majority vote $\trv(x)\in\{0,1\}$, i.e. a value  for which $$\sum\{\gamma_{i}:h_{i}(x)= 1-\trv(x), i<n\}<\eps_{2};$$
            \item $\trv(x) = f(x)$. 
            \end{enumerate}
    \end{claim}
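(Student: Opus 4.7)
The plan is to invoke the Littlestone Minimax Theorem (Theorem A above) directly, with the previous claim supplying its hypothesis. Set $g := f$ and take $\epsilon_L := \epsilon_1$ and $\epsilon_M := \epsilon_2$; by hypothesis $\epsilon_1 < \epsilon_2 < 1/2$, so the parameter requirement is satisfied.

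First, I would check condition (A) of Littlestone Minimax for this choice. A finitely supported probability measure on $X$ is precisely a set $\{(x_t, w_t) : t < s\}$ with $\sum_t w_t = 1$ and $w_t > 0$, which we may view as a distribution $\mathcal{D}$ with finite support on $X$. Since $\mch$ is a Littlestone class with $\vc(\mch) = d$, and since $f \in \mch^{\Cdr}$ by assumption, Claim~\ref{claim:epsilon-net-one-hyp} produces some $h \in \mch$ with $\Pr_{x \sim \mathcal{D}}[h(x) \neq f(x)] < \epsilon_1$. Rewriting the left-hand side as $\sum\{w_t : h(x_t) \neq f(x_t), t < s\}$, this is exactly condition (A) of Littlestone Minimax.

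Now I would apply Littlestone Minimax to conclude (B): since $\epsilon_1 < \epsilon_2 < 1/2$, there exists a finite weighted subset $\{(h_i, \gamma_i) : i < n\}$ of $\mch$ so that for every $x \in X$,
\[ \sum\{\gamma_i : h_i(x) = f(x)\} \geq 1 - \epsilon_2. \]
Since the weights sum to $1$, this is equivalent to
\[ \sum\{\gamma_i : h_i(x) \neq f(x)\} < \epsilon_2. \]
Setting $\trv(x) := f(x)$ gives (ii) directly, and the displayed inequality becomes (i): the $\gamma_i$-weight of $h_i$'s disagreeing with the majority value $\trv(x)$ is less than $\epsilon_2$.

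There is essentially no obstacle here; the content has been entirely absorbed into the two preceding claims (the $\epsilon$-net argument giving the measure-by-measure approximation, and the ambient assumption that $\mch$ is Littlestone so that Littlestone Minimax applies). The only thing to be a bit careful about is bookkeeping the inequalities: one must note that the strict inequality in (i) comes from the ``$\geq 1 - \epsilon_2$'' on the agreement side passing to a strict ``$<\epsilon_2$'' on the disagreement side only when the weights sum to exactly $1$, which they do by the hypothesis on our weighted subset.
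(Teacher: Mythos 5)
Your proposal matches the paper's proof essentially verbatim: both verify condition~(A) of Littlestone Minimax via Claim~\ref{claim:epsilon-net-one-hyp} (with $g := f$, $\epsilon_L := \epsilon_1$) and then read off condition~(B) to obtain the $\epsilon_2$-hypothesis. The one small bookkeeping point you gloss over (as does the paper's own phrasing) is that $\sum\{\gamma_i : h_i(x) = f(x)\} \geq 1-\epsilon_2$ only yields $\sum\{\gamma_i : h_i(x) \neq f(x)\} \leq \epsilon_2$, not the strict $<\epsilon_2$ required by the definition of $\epsilon$-hypothesis; to be fully rigorous one should invoke~(B) with some $\epsilon_M \in (\epsilon_1, \epsilon_2)$, which the freedom in the Minimax theorem allows.
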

    \begin{proof}
Suppose we are given $\{ (x_t, w_t) : t < s \}$ as in condition (A) of Littlestone Minimax. By \ref{claim:epsilon-net-one-hyp} there is some 
$h \in \mch$ whose weighted error against $f$ is $<\epsilon_1$. So by condition (B) of Littlestone Minimax there is an 
$\epsilon_2$-hypothesis $H = \{ h_i, \gamma_i : i < n \}$ of $H$ so that $\trv_H = f$. 
        \end{proof}
    %%%
    %%%

We have seen that our $f \in \mch^{\Cdr}$ is expressible as a weighted $\epsilon_2$-majority vote of some $H \subseteq \mch^{\Cdr}$. $H$ is finite, but a priori we have no bound on its size. Our last claim shows that we may assume a uniform finite bound on the size of $H$, as a function of $d$ and any fixed $\eps_{3}\in(0,1/2-\eps_{2})_{\mathbb{R}}$. (Why? Use the fact that the dual of any VC class is also a VC class, take the finitely supported measure on $\mch$ corresponding to the weighted $\epsilon_2$-good set $H$, and choose a representative sample from it of size about $d/(\epsilon_3)^2$, as the proof describes.)  
   
\begin{defn} \label{d:majority} Given a hypothesis class $\mch$ and an odd $p \in \mathbb{N}$, let $\langle g_j : j < p \rangle$ be a sequence of elements of $\mch$, 
possibly with repetition. 
The \emph{majority vote} of this sequence is defined to be the function $g : X \rightarrow \{ 0, 1 \}$ given by: $g(x) = 1$ if $| \{ j < p : g_j(x) = 1 \} | > (p-1)/2$ 
and $g(x) = 0$ otherwise.  
Let 
\[ \Maj_p(\mch) \] 
be the set of functions arising as majority votes of sequences $\langle g_j : j < p \rangle$ in the $p$-fold Cartesian power $ \mch \times \cdots \times \mch$.  
\end{defn}

In particular, definition of majority in \ref{d:majority} is the counting majority and is not directly related to a background epsilon. 
 
    \begin{claim} \label{claim-vc-uniform}
    Let $\mathcal{D}, \mch, d, C, \eps_{2}$ be defined as before. For some odd $p\in\mathbb{N}$, for every $f\in\mch^{\Cdr}$, there exists 
a sequence $\langle f_i : i < p \rangle$ of elements of $\mch$, possibly with repetition, whose majority vote is $f$. In other words, $\mch^{\Cdr} \subseteq \Maj_p(\mch)$.  
    \end{claim}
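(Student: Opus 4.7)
The plan is to convert the weighted $\eps_2$-majority vote supplied by Claim~\ref{claim:eps-hypothesis-via-littlestone-minimax} into an \emph{unweighted} majority vote of bounded length via a sampling argument. Fix $f \in \mch^{\Cdr}$; by \ref{claim:eps-hypothesis-via-littlestone-minimax}, there is a finite weighted sequence $H = \{(h_i,\gamma_i) : i < n\}$ in $\mch$ with $\sum\{\gamma_i : h_i(x) \neq f(x)\} < \eps_2$ for every $x \in X$. Regard $(\gamma_i)_{i<n}$ as a probability distribution $\mu$ on $\{h_i : i<n\}$. The aim is to show that for some $p = p(d,\eps_3)$ independent of $f$ and $H$, drawing $p$ hypotheses i.i.d.\ from $\mu$ produces, with positive probability, a sequence whose unweighted majority vote agrees with $f$ on all of $X$. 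Pointwise this is immediate: for any fixed $x$, a sample from $\mu$ disagrees with $f(x)$ with probability strictly less than $\eps_2$, so by Hoeffding's inequality the empirical disagreement exceeds $\eps_2 + \eps_3 < 1/2$ with probability exponentially small in $p$.

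The main obstacle is to upgrade the pointwise bound to a \emph{uniform} bound over $X$, which can be infinite, so a naive union bound does not suffice. I would handle this via the $\eps_3$-sample theorem applied to the dual class. Let $\mch_f = \{h_f : h \in \mch\}$ with $h_f(x) = \delta_{h(x)\neq f(x)}$, as in the proof of Claim~\ref{claim:epsilon-net-one-hyp}; then $\vc(\mch_f) = d$, so the dual $\dual(\mch_f)$ has finite VC dimension $d^\star$ (bounded, e.g., by $2^{d+1}$ via Assouad's lemma). Equivalently, the family of evaluation maps $\{\psi_x : x \in X\}$, with $\psi_x(h_f) := h_f(x)$, is a set system on $\mch_f$ of VC dimension at most $d^\star$. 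The uniform $\eps_3$-sample theorem (see e.g.\ the relevant statement in \cite{matousek-lectures-discgeo}) then furnishes an integer $p = O(d^\star/\eps_3^{\,2})$ such that, with positive probability, an i.i.d.\ sample $i_1,\ldots,i_p \sim \mu$ satisfies
\[ \left| \frac{1}{p}\, \#\{j < p : h_{i_j}(x) \neq f(x)\} \;-\; \sum_{i<n} \gamma_i \, \delta_{h_i(x)\neq f(x)} \right| < \eps_3 \quad \text{for every } x \in X. \]
Combined with $\sum_i \gamma_i\,\delta_{h_i(x) \neq f(x)} < \eps_2$ and the hypothesis $\eps_2 + \eps_3 < 1/2$, this forces the empirical disagreement at every $x \in X$ to be strictly less than $1/2$. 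Taking $p$ odd (append one repeated copy if necessary), the majority vote of $\langle h_{i_j} : j < p\rangle$ then equals $f$ on all of $X$, proving $f \in \Maj_p(\mch)$.

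The crux is precisely the dual-VC step: pointwise concentration plus a union bound fails over infinite $X$, and it is only because $\mch$ is a VC class (a hypothesis we have but did not invoke in \ref{claim:eps-hypothesis-via-littlestone-minimax}) that $\dual(\mch)$ has bounded VC dimension and the sample size $p$ depends only on $d$ and $\eps_3$, uniformly in $f$. Everything else---transferring weighted to unweighted via sampling, and ensuring parity of $p$---is routine.
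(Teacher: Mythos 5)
Your proposal is correct and is essentially the argument in the paper: both use that $\dual(\mch)$ (or equivalently $\dual(\mch_f)$, which has the same VC dimension since $h \mapsto h_f$ is a bijection preserving shattered sets) is a VC class, invoke uniform convergence / the $\eps_3$-sample theorem for the finitely supported measure induced by the weights $\gamma_i$, and then push the empirical disagreement below $1/2$ via $\eps_2 + \eps_3 < 1/2$. The only cosmetic difference is that the paper tracks the sets $\{h : h(x) = 1\}$ while you track $\{h : h(x) \neq f(x)\}$, and the paper fixes $p$ to be the smallest odd integer $\geq m_*$ up front rather than appending a copy afterward (the latter being preferable so that one uniform odd $p$ works for every $f$).
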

    \begin{proof}
Let $f \in \mch^{\Cdr}$ be given, and let $H = \{ (h_i, \gamma_i) : i < n \}$ be the corresponding $\epsilon_2$-hypothesis given by 
\ref{claim:eps-hypothesis-via-littlestone-minimax}. Recall that $\mch_* := \dual(\mch)$ (\ref{d:dual}) is also a VC class, of VC dimension $d_* \leq 2^{d+1}-1$. 
So it satisfies uniform convergence: namely, there is a constant $c_* = c_*(\epsilon_3, d_*)$ and a sample size $m_* = m_*(\epsilon_3, d_*) \leq c_* d_* (\epsilon_3)^{-2}$ with the following 
property.  
For any probability measure on $\mch$ [recall that in the dual $\mch_*$, the domain is $\mch$], and in particular for 
that corresponding to $H = \{ (h_i, \gamma_i) : i < m \}$,  
for any $p \geq m_*$ [for definiteness, for us, let $p$ be the smallest odd number $\geq m_*$], 
with high probability an iid sample $\langle g_t : t < p \rangle$ satisfies: for 
every $x \in X$, the ``empirical and actual measures are close,'' meaning that 
\[  \left| \left( \frac{ | \{ t < p : g_t(x) = 1 \}| }{|p|} \right) - \left( \sum \{ \gamma_i : i < n, h_i(x) = 1 \} \right)   \right| < \epsilon_3.     \] 
(Informally, each $x \in \mch_*$ determines the set of $h \in \mch$ which label it by 1; the size of that set according to $H$ should be 
close to the size of that set according to the sample, which is of course sub-sampled from $\{ h_i : i < n \}$ by our choice of measure.) 

Since this happens with high probability, in particular we can fix \emph{some} sequence $\langle g_t : t < p \rangle$ on which this happens. 
Now $\sum \{ \gamma_i : i < n, h_i(x) = 1 \}$ is either $<\epsilon_2$ or $\geq 1-\epsilon_2$, so 
$\frac{ | \{ t < p : g_t(x) = 1 \}| }{|p|}$ is either $<\epsilon_2 + \epsilon_3$ or $\geq 1-\epsilon_2-\epsilon_3$. In other words, since 
$(\epsilon_2 + \epsilon_3)< 1/2$, we have that for every $x$, $\langle g_t(x) : t < p \rangle$ has a numerical majority which agrees with $H$ and 
hence represents $f$. 
\end{proof}

    %%%
    %%%
    \sbr
    \noindent 
    Now we can conclude by applying known bounds on the $\ldim$ of fixed finite boolean combinations of elements of a Littlestone class. We will use:

\begin{fact}[\cite{alon20}, Theorem 3] \label{fact:maj-votes} If $\mch$ is Littlestone then for any fixed odd $p \in \mathbb{N}$, 
$\Maj_p(\mch)$ is also Littlestone. 
 Concretely, we have the following upper bound:
    \[\ldim(\text{Maj}(\mch,\ldots,\mch))\leq \tilde{O}(p^{2}\cdot \ldim(\mch)).\]  
\end{fact}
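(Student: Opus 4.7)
The plan is to prove Fact \ref{fact:maj-votes} by exhibiting an online learning algorithm for $\Maj_p(\mch)$ with mistake bound $\tilde O(p^2 \ell)$, and then invoking Littlestone's theorem that $\ldim$ equals the optimal online mistake bound \cite{littlestone88}. The base learner is the Standard Optimal Algorithm (SOA) for $\mch$, which achieves mistake bound $\ell := \ldim(\mch)$ on any $\mch$-realizable sequence; this will be used as the weak learner of an online boosting construction.

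To build the learner for $\Maj_p(\mch)$, I would run $T$ copies of SOA in parallel on re-weighted versions of the input stream, with example weights updated by a Hedge / multiplicative-weights rule that emphasizes points on which the boosted ensemble has recently erred, and predict at each step by the (weighted) majority of the $T$ instances' predictions. The key observation is that since the true target $\text{Maj}(h_1,\dots,h_p)\in \Maj_p(\mch)$ is a majority of $p$ elements of $\mch$, on any re-weighted sub-sequence at least one SOA instance can emulate an $h_j$ that is correct on at least $\tfrac{1}{2}+\tfrac{1}{2p}$ of the weighted mass, giving a weak-learning edge of $\Theta(1/p)$. A standard online-boosting potential argument (in the spirit of Freund's boost-by-majority, adapted to the adversarial online setting as in \cite{alon20}) then shows that $T = \tilde O(p^2)$ rounds suffice to amplify this $1/p$ edge into a reliable majority vote; since each SOA instance makes at most $\ell$ mistakes, the boosted ensemble makes at most $T\ell = \tilde O(p^2 \ell)$ mistakes. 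Invoking Littlestone's characterization in the converse direction yields $\ldim(\Maj_p(\mch)) \le \tilde O(p^2 \ell)$, and in particular $\Maj_p(\mch)$ is Littlestone.

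The main obstacle is carrying out the online boosting analysis in the adversarial setting rather than the iid batch setting in which classical boosting was developed: one must verify that the chosen potential function still decreases sufficiently per mistake under an adversarially chosen sequence, and that the $p^2$ dependence is tight rather than some higher power. The $\Theta(1/p)$ weak-learning margin is dictated by the majority structure, and the usual $1/\gamma^{2}$ scaling in the number of boosting rounds (with $\gamma=1/p$) is the source of the $p^{2}$ factor; the logarithmic factors absorbed into $\tilde O$ come from standard Hedge-style guarantees and from the cost of discretizing the weight vectors. An alternative, purely combinatorial route would use a Sauer-Shelah-type inequality for Littlestone classes to count the branches of a shattered tree for $\Maj_p(\mch)$ that can be witnessed by a $p$-tuple in $\mch^p$ together with a per-node majority configuration, but the bookkeeping between profile-counts and majority-subset choices is delicate and seems to give the same $\tilde O(p^{2}\ell)$ bound.
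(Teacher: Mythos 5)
The paper does not itself prove this Fact; it cites \cite{alon20}, Theorem~3, and remarks only that the argument ``uses a technique named Online boosting.'' Your proposal reconstructs that route at a high level---SOA as the base learner, multiplicative-weights reweighting, a $\Theta(1/p)$ edge coming from the majority structure, and $\tilde{O}(p^2)$ boosting rounds---so you are following the approach the paper gestures at.

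There is, however, a genuine gap in the step that produces the final bound. You write ``since each SOA instance makes at most $\ell$ mistakes, the boosted ensemble makes at most $T\ell$ mistakes.'' The SOA's $\ell$-mistake guarantee holds only on $\mch$-realizable sequences, but the (re-weighted) streams fed to your $T$ SOA copies are labeled by some $\Maj(h_1,\dots,h_p)$, which in general is not in $\mch$: those streams are $\Maj_p(\mch)$-realizable, not $\mch$-realizable, so the claim ``each SOA instance makes at most $\ell$ mistakes'' is false as stated and $T\ell$ is not a valid tally of the booster's mistakes. (Independently of that, a booster's mistake count is not obtained by summing the weak learners' mistakes.) What the majority structure actually gives is that on any weighted stream some fixed $h\in\mch$ agrees with the target on at least a $\tfrac{1}{2}+\tfrac{1}{2p}$ fraction of the mass; to turn this into a usable weak learner you must pass to the \emph{agnostic} regret bound for SOA, roughly $\tilde{O}(\sqrt{\ell T})$ against the best expert in $\mch$ (cf.\ \cite{ben-david09agnostic}), and then run a potential-function analysis of online boosting that charges the booster's mistakes against that regret---that is where the $\ell/\gamma^2 = p^2\ell$ scaling actually comes from. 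Your appeal to a Freund-style boost-by-majority potential points in the right direction, but the specific arithmetic (per-copy mistake budget times number of copies) would not survive being made precise.
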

The proof of this fact uses a technique named Online boosting. In model theoretic language, the familiar fact is that if $\varphi$ is a stable formula, then any fixed finite boolean combination 
of $\varphi$'s is also stable, though of a possibly larger rank.

We now formally assemble the pieces to prove Theorem \ref{thm:littlestone-Cd}:
    \begin{proof}[Proof of Theorem \ref{thm:littlestone-Cd}]
By Observation \ref{claim:k-realizability-backtrack}, regardless of what our constant $c$ is, $\eps<1/cd$ implies that $\mch_{\infty}(\eps)\subseteq\mch^{\cdr}$. 
Also, as long as $cd > d+1$, Theorem \ref{sat:hypothesis}(4), $\vc(\mch_\infty(\eps)) = \vc(\mch) = d < \infty$.       

For the rest of the proof, fix $C$ satisfying the statement of the theorem (recall $C\approx12.0412$) 
and choose some $\eps_{1}>0$ satisfying $C>20\cdot\eps_{1}^{-1}\cdot \log(\eps^{-1}_{1})$. 
Then Claim \ref{claim:epsilon-net-one-hyp} will allow us to find, for every $f\in\mch^{\Cdr}$, 
some $h\in\mch$ with error less than $\eps_{1}$ with respect to $f$, by realizability and the $\eps$-net property.  
For any $\eps_{2}>\eps_{1}$, Claim \ref{claim:eps-hypothesis-via-littlestone-minimax} yields an $\eps_{2}$-hypothesis $H$ on $\mch$ such that $\trv_{H}$ equals $f(x)$, for all $x\in X$. 
Then, Claim \ref{claim-vc-uniform} allows us to find a uniform size for these representations: 
it shows there is $p\in\mathbb{N}$ so that, sampling $p$ times from the distribution given by $H$, and then taking the $p$-wise 
majority vote, we obtain $f(x)$ for all $x\in X$.

We have shown that under the assumptions of the theorem  
\[ \mch_\infty(\eps) \subseteq \mch^{\Cdr} \subseteq \Maj_p(\mch) \]
Concluding by Fact \ref{fact:maj-votes}, $\mathcal{H}_{\infty}(\eps)$ is Littlestone since $\Maj_p(\mch)$ is.  
    \end{proof}

\begin{rmk} \label{C:approx} 
    The choice $C\approx12.0412$ comes from the following considerations:
    \begin{itemize}
        \item The decreasing and injective behavior of $f(x)=\frac{1}{x}\cdot\log(\frac{1}{x})$ over $(0,1/2)_{\mathbb{R}}$.
        \item The argument in Claim \ref{claim:epsilon-net-one-hyp} needs the inequality $Cd>d\cdot f(\eps_{1})$ to hold
        \item Continuity of $f(x)$ at $1/2$, and $12.0412\approx\lim_{x\to1/2^{-}}f(x)$. 
    \end{itemize}
\end{rmk}

	\sbr

	\section{Conclusions for Littlestone classes}

	In this section, we summarize the main results of this paper so far into the following theorem, also visible in Figure \ref{fig:phase-diagram}.
	\begin{theorem}
	    Let $(X,\mch)$ be a hypothesis class such that $\ldim(\mch)=\ell$ and $\vc(\mch)=d$. The saturations of these classes satisfy the following:
	    \begin{enumerate}[label=(\alph*)]
		\item  When $\eps \leq \frac{1}{\ell+1}$, $\ldim(\mch_{\infty}(\eps))=\ldim(\mch)=\ell$;
        \item When $\eps \leq \frac{1}{d+1}$, $\vc(\mch_\infty(\epsilon)) = \vc(\mch) = d$. 
		\item For $\eps<\frac{1}{Cd}$, for some $C\gtrapprox 12.0412$, $\ldim(\mch_{\infty}(\eps))<\infty$ although it need not stay the same as $\ldim(\mch)$;
		\item For $\eps>\frac{1}{d+1}$, it is possible for both $\ldim(\mch_{\infty})$ and $\vc(\mch_{\infty})$ to be infinite.
	    \end{enumerate}
	\end{theorem}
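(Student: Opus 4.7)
The statement consolidates preservation results already obtained and adds one new existence-of-example assertion, so the plan is largely organizational. Parts~(a) and~(b) follow immediately from Theorem~\ref{sat:hypothesis}(3) and Theorem~\ref{sat:hypothesis}(4), respectively, which were established via the pull-back Lemmas~\ref{lemma2a}--\ref{lemma3d}: a simple union bound over at most $s$ error sets, each of weighted mass $<\epsilon$, produces an actual configuration in $\mch$ from a virtual one in $\mch_\infty(\epsilon)$, provided $\epsilon s \leq 1$. Applying this with $s = \ell+1$ (for shattered Littlestone trees) and $s = d+1$ (for shattered sets, using Lemma~\ref{lemma3b}) gives~(a) and~(b) directly.

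Part~(c) is the content of Theorem~\ref{thm:littlestone-Cd}, whose proof combined three ingredients already in hand: Claim~\ref{claim:k-realizability-backtrack} (which pulls each $h \in \mch_\infty(\epsilon)$ down to $\mch^{\Cdr}$ whenever $\epsilon < 1/(Cd)$); Claims~\ref{claim:epsilon-net-one-hyp} and~\ref{claim:eps-hypothesis-via-littlestone-minimax} (applying the $\epsilon$-net theorem to the VC class of symmetric differences $\mch_f$, then the Littlestone Minimax Theorem, to represent each $f \in \mch^{\Cdr}$ as a weighted $\epsilon_2$-majority of elements of $\mch$); and Claim~\ref{claim-vc-uniform} together with Fact~\ref{fact:maj-votes} (sampling to obtain a uniform size $p$ of the majority and then invoking the bound on $\ldim(\Maj_p(\mch))$). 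So~(c) requires no new argument.

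The only fresh work is~(d), which demands an explicit witness. The plan is to exhibit a class $\mch$ of VC dimension exactly $d$, with finite Littlestone dimension, such that for some $\epsilon \in (1/(d+1),\,1/2)$ the saturation $\mch_\infty(\epsilon)$ shatters arbitrarily large finite sets. The heuristic is that once $\epsilon > 1/(d+1)$, the uniform weighting on $d+1$ hypotheses becomes a legitimate $\epsilon$-hypothesis in which no single element is ``protected'' by the majority, so the pull-back argument of Lemma~\ref{lemma2a} fails (the inequality $\epsilon s < 1$ breaks as soon as $s > 1/\epsilon$) and genuinely new functions become representable. Concretely, one arranges many disjoint copies of a sparse $d$-dimensional family so that the resulting $\trv_H$-functions realize indicator functions of arbitrary subsets of a large parameter set, producing shattered sets of every finite size.

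The main obstacle is verifying simultaneously that (i) the base class $\mch$ has VC dimension exactly $d$ and finite Littlestone dimension, so that the phase transition happens at the predicted threshold rather than elsewhere; (ii) the aggregates chosen are genuinely $\epsilon$-good as weighted sets, respecting the strict inequality in Definition~\ref{d:eps-hypothesis}(3); and (iii) iterating the construction through $\mch_n$ as $n$ grows yields shattered sets of unbounded, not merely linear, size. A concrete family of examples meeting these constraints is described in~\S\ref{s:examples1}; the delicate step of the theorem is that verification.
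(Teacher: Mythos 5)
Your proposal is essentially correct and mirrors the paper's proof exactly for parts (a), (b), and (c), which the paper itself dispatches by citing Theorem~\ref{sat:hypothesis} and Theorem~\ref{thm:littlestone-Cd}. For part~(d), you correctly identify that an explicit witness is needed and your heuristic—that once $\epsilon > 1/(d+1)$ the uniform weighting on $d+1$ hypotheses becomes a legitimate $\epsilon$-hypothesis with no ``protected'' element—matches the key idea; however, your sketched construction (``many disjoint copies of a sparse $d$-dimensional family'') diverges from the paper's cleaner Example~\ref{eg:counterex-vcblowsup}, which simply takes $\mch_0$ to be all $d$-element subsets of $\mathbb{N}$ and shows inductively that the uniform measure on the $n+d+1$ sets $\{b_S : |S|=n+d,\ S\subseteq A\}$ is $\epsilon$-good and represents $b_A$ for any $(n+d+1)$-element $A$, yielding $\vc(\mch_n) \geq n+d$; note also that the example appears in \S\ref{s:examples2}, not \S\ref{s:examples1}, and is already fully verified there, so no further ``delicate'' work remains.
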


	\begin{figure}[ht]\label{fig:phase-diagram}
	    \centering
	    \includegraphics[width=0.9\linewidth]{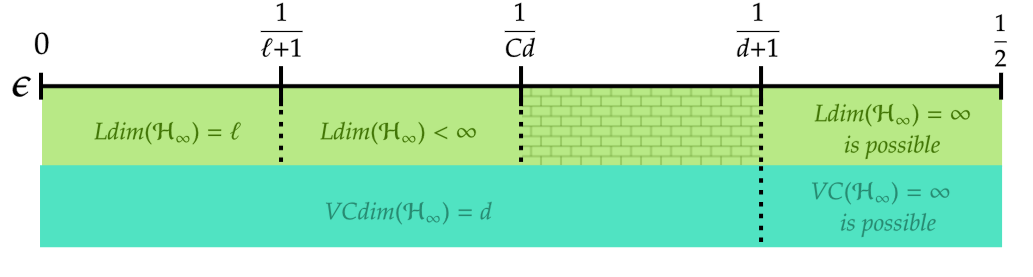}
	    \caption{Regimes of $\eps\in(0,\frac{1}{2})_{\mathbb{R}}$ and the corresponding statements of what we showed about $\ldim(\mch_{\infty})$ and $\vc(\mch_{\infty})$, given that $\ldim(\mch)=\ell$ and $\vc(\mch)=d$. Here $C$ is from Theorem \ref{thm:littlestone-Cd}}
	    \label{fig:diagram-summary}
	\end{figure}

	\begin{proof}
	    Parts $(a)$ and $(b)$ are both shown in Theorem \ref{sat:hypothesis}. Part $(c)$ is shown in Theorem \ref{thm:littlestone-Cd}. Part $(d)$ is shown by Example \ref{eg:counterex-vcblowsup}.
	\end{proof}

	\br

	\section{Definition of $\epsilon$-saturation for $\gee$} \label{s:graphs} 

	We now give the definitions in the case of graphs. A crucial role will be played by excellent sets. Remember these can be different from good sets; see \S \ref{example:good-excellent} .

	\begin{conv}
	When dealing with graphs we will always assume $\epsilon < 1/4$.  
	\end{conv}

	\begin{conv}
	$\gee = (V,E)$, which we may abbreviate $\gee$, will usually denote a stable graph 
	with:
	\begin{itemize}
	\item finite edge stability, also called threshold dimension, $s$,  
	\item hence finite Littlestone dimension $\ell$,  
	\item hence finite VC dimension $d$. 
	\end{itemize} The vertex set $V$ can be finite, but need not be. 
	We allow self-loops, that is, we allow $(v,v)$ as an edge for any vertex $v$.\footnote{Briefly, 
	this is because we will add $\epsilon$-excellent sets of vertices as new ``virtual'' vertices and 
	any such set $A$ will have a well-defined edge or nonedge to any other such set, including itself. 
	There is a priori no reason to erase the information of which of the two it is.}  
	\end{conv}

	As before, we will repeat the assumption of stability in the main definitions and 
	theorems which use it. We will often write ``$v \in \gee$'' to mean 
	$v \in V(\gee)$, and ``$v \sim w$'' to mean $(v,w) \in E(\gee)$.

	Compared to the definition of 
	$\epsilon$-hypothesis, the 
	definition of $\epsilon$-vertex will require an extra step due to the fact that 
	we would like edges between $\epsilon$-vertices to be well defined (this corresponds 
	to $\epsilon$-excellent in the stable regularity lemma). As before, we will use weighted good and excellent sets.  

	\begin{defn} Given $\epsilon$ and $\gee$, 
	a \emph{weighted $\epsilon$-good set} 
	of $\gee$ is a finite set of pairs:
	\[ A = \{ (v_i, \gamma_i) : i < k \} \]
	where $k \in \mathbb{N}$, and:
	\begin{enumerate}
	\item $\{ v_i : i < k \} \subseteq \gee$, and
	$\langle v_i : i < k \rangle$ is without repetition.
	\item each $\gamma_i \in (0,1]_{\mathbb{R}}$, and $\sum \{ \gamma_i : i < k \} = 1$.
	\item $($$\epsilon$-goodness$)$ For each $v \in V$,  either  
	\[  \sum \{ \gamma_i : v_i \sim v \} \geq 1-\epsilon 
	\mbox{ or } \sum \{ \gamma_i : v_i \not\sim v \} \geq 1-\epsilon.  \]
	In the first case, write $\trv(A,v) = 1$, and in the second $\trv(A,v) = 0$.
	\end{enumerate}
	\end{defn}

	\begin{defn} \label{d:epsilon-vertex} 
	Given $\epsilon$ and $\gee$,  
	an $\epsilon$-vertex $($or a \emph{weighted $\epsilon$-excellent set}$)$ of $\gee$ is 
	a finite set of pairs:
	\[ B = \{ (v_i, \gamma_i) : i < k \} \]
	where $k \in \mathbb{N}$, $B$ is a weighted $\epsilon$-good set, and moreover:
	\begin{enumerate}
	\item[(4)] $($$\epsilon$-excellence$)$ For every weighted $\epsilon$-good set $A$, 
	either  
	\[  \sum \{ \gamma_i : \trv(A,v_i) = 1 \} \geq 1-\epsilon   
	\mbox{ or } \sum \{ \gamma_i : \trv(A, v_i) = 0 \} \geq 1-\epsilon.  \]
	In the first case, write $\trv(B, A) = 1$, and in the second $\trv(B,A) = 0$. 
	\end{enumerate}
	\end{defn}

	That \ref{d:epsilon-vertex} is sufficiently symmetric for our purposes 
	is justified by the following (which also explains our convention 
	that in working with graphs, $\epsilon < 1/4 < 2-\sqrt{3}$).

	\begin{rmk} \label{ewd}
	    Let $\eps\in(0,2-\sqrt{3})$, and let $A$ and $B$ be two $\eps$-excellent weighted sets of $G$. Then, the values $\trv(A,B)$ and $\trv(B,A)$ are well-defined and equal.  
	\end{rmk}
	\begin{proof}
        Suppose for a contradiction that $\trv(A,B)=1$ and $\trv(B,A)=0$. This implies that, if the weight for each $a_{i}$ is $\gamma_{i}$ and the weight for each $b_{i}$ is $\rho_{i}$, then:
        \[\sum\{\gamma_{i}:\trv(a_{i},B)=1\}\geq 1-\eps,\hspace{1cm} \sum\{\rho_{i}:\trv(A,b_{i})=1\}\leq \eps\] 
        The first bound above implies that the total sum of the products $\gamma_{i}\rho_{j}$ over all edges from $A$ to $B$ is lower bounded by
        \[\sum_{a_{i}\in A:\trv(a_{i},B)=1}\gamma_{i}\cdot (1-\eps) + \sum_{a_{i}\in A:\trv(a_{i},B)=0}\gamma_{i}\cdot 0 \geq (1-\eps)^{2}.\]
        The second bound above implies that the same total sum of products of weights described above is upper bounded by
        \[\sum_{b_{i}\in B:\trv(A,b_{i})=1}\rho_{i}\cdot 1 + \sum_{b_{i}\in B:\trv(A,b_{i})=0}\rho_{i}\cdot \eps \leq 2\eps.\]
	    The symmetry of edges in $G$ and these two bounds imply that
        \begin{align*}
		&(1-\eps)^{2} < \sum_{(a_{i},b_{j})\in 
        E(A,B)}\gamma_{i}\rho_{j} < 2\eps.\\
		\Rightarrow& \eps\in(2-\sqrt{3}, 2+\sqrt{3})
	    \end{align*}
	   These values for $\eps$ are ruled out by the hypothesis that $\eps < 2-\sqrt{3}$.  
	\end{proof}

	\begin{defn}
	Given $\epsilon$ and $\gee$, whenever $A, B$ are $\epsilon$-vertices, 
	say there is an $\epsilon$-edge between $A,B$, in symbols 
	$A \sim B$ or $A \sim_\epsilon B$, to mean that $\trv(A,B) = \trv(B,A) = 1$, 
	and say there is an $\epsilon$-nonedge, in symbols $A \not\sim B$ or 
	$A \not\sim_\epsilon B$, to mean that $\trv(A,B) = \trv(B,A) = 0$.  
	\end{defn}

	\begin{ntn}
	Whenever $A$ is an $\epsilon$-vertex, let $\trv_A$ be the function whose 
	domain is the set of $\epsilon$-vertices of $\gee$ and whose range is $\{ 0, 1 \}$, 
	given by $B \mapsto \trv(A,B) = \trv(B,A)$.  
	\end{ntn}

	\begin{conv}
	Since for any $v \in V$, $\{ v \}$ is an $\epsilon$-vertex, 
	we may drop parentheses, writing $v$ for $\{v\}$ or $w$ for $\{w\}$ in 
	expressions like $\trv$ or $\sim_\epsilon$. 
	\end{conv}

	To define the $\epsilon$-saturation, we will formally extend our graphs  
	by adding $\epsilon$-vertices, and 
	so all our edges are a priori $\epsilon$-edges. 
	Note, however, that in Definition \ref{d:epsilon-saturation-H} in some sense 
	we constantly forget this: the graphs $\gee_n$ 
	at each stage in the construction are \emph{actual graphs} (which come from 
	formally adding $\epsilon$-vertices as actual new vertices, then   
	``rounding up'' $\sim_\epsilon$ to $\sim$ and ``rounding down'' 
	$\not\sim_\epsilon$ to $\not\sim$). Since there will often be 
	many $\epsilon$-vertices with the same $\sim_\epsilon$-neighborhood, 
	in order to avoid triviality, we only add one representative of each 
	such neighborhood class.\footnote{ Indeed, since $\trv(A,B) = \trv(A^\prime, B^\prime)$ 
	whenever $A, A^\prime$ and $B, B^\prime$ have the same $\sim_\epsilon$-neighborhoods, 
	when we will want to remember these arose from 
	$\epsilon$-vertices, we are free to calculate by choosing some 
	possibly different representative of the class. } This is the analogue of 
not repeating functions in the case of $\mch$ above.  

	\begin{defn}[$\epsilon$-types] \label{epsilon-type-G}
	Given $\gee = (V,E)$ and a function $\trv: V \rightarrow \{ 0, 1 \}$: 
	\begin{enumerate}
	\item $\trv$ is $k$-realized in $\gee$, for some given $k \in \mathbb{N}$, if for any 
	$W \subseteq V$, $|W| \leq k$ there is $v \in V$ such that 
	$\trv_{\{(v,1)\}} \rstr W = \trv \rstr W$. 

	\item $\trv$ is finitely realized in $\gee$ if it is $k$-realized in $\gee$ for all $k \in \mathbb{N}$.   

	\item Given also $\epsilon < 1/4$, an \emph{$\epsilon$-type} for $\gee$ 
	is a function $\trv: V \rightarrow \{ 0, 1 \}$  
	such that for any finite $W \subseteq V$, there is an $\epsilon$-vertex $A$ of $\gee$ 
	such that\footnote{informally, we could say the function is  
	finitely realized by $\{ \trv_A : A \mbox{ an $\epsilon$-vertex of $\gee$} \}$. }  
	\[ \trv_A \rstr W = \trv \rstr W. \]
	\end{enumerate}
	\end{defn}

	The central definition of this paper, in the case of graphs, is:  

	\begin{defn}[The $\epsilon$-saturation of $\gee$] \label{d:epsilon-saturation-G} 
	Suppose we are given a graph $\gee = (V,E)$ and $0 < \epsilon < 1/4$. 
	Define by induction on $n\leq \omega$ a graph $\gee_n(\epsilon) = (V_n, E_n)$: 

	\begin{enumerate}
	\item $\gee_0(\epsilon) = \gee = (V,E)$.
	\item $\gee_{n+1}(\epsilon) = (V_{n+1}, E_{n+1})$ where:\footnote{Why do we not  
	ask that for every $\epsilon$-vertex $A$ of $V_n$, there is precisely one $\epsilon$-vertex  
	$B$ of $V_n$ such that $B \in V_n \cup W$ and $\trv_B = \trv_A$? This would impose a  
	retroactive restriction on $G_n$ saying that there cannot be two singletons with the same  
	neighborhood. There is a priori no reason this should be true of $G_0$, that is,  
	we do not require $G_0$ to be prime, even though we take care not to add instances of  
	non-primality later on.}
	\begin{enumerate}
	\item $V_{n+1} =  V_n \cup W$ where $W$ is a representative set 
	of $\epsilon$-vertices of $V_n$ 
	meaning: for every $\epsilon$-vertex $A$ of $V_n$ with $|A| > 1$, 
	there is precisely one $\epsilon$-vertex $B$ of $V_n$ such that $B \in W$ and $\trv_B = \trv_A$. 

	\item $E_{n+1}$ is given by:
	\begin{enumerate}
	\item for $(v,w) \in V_n \times V_n$, 
	$(v,w) \in E_{n+1}$ iff $(v,w) \in E_n$; 
	\item otherwise, for $(v,w) \in (V_{n+1} \times V_{n+1}) \setminus (V_n \times V_n)$, 
	\\ $(v,w) \in E_{n+1}$ iff $v \sim_\epsilon w$. 
	\end{enumerate}
	\end{enumerate}

	\item $\gee_\infty(\epsilon) := \mch_\omega(\epsilon) = (V_\infty, E_\infty)$ where 
	$V_\infty = V_\omega = \bigcup \{ V_n : n < \omega \}$ and 
	$E_\infty = E_\omega = \bigcup \{ E_n : n < \omega \}$.  
	\end{enumerate} 
	We call $\gee_\infty(\epsilon)$ ``the $\epsilon$-saturation of $\gee$.''
	\end{defn}

	Here also the name, and the connection to $\epsilon$-types, will be justified in due course.
	Also, we could have continued the definition by transfinite induction, but this is not used here.

\section{Characterization of $\epsilon$-saturation for stable graphs}

	The following definition and lemma have a natural analogue for hypotheses classes, which we do not 
	state separately.  Note that in the case of a hypothesis class $(X, \mch)$, the domain $X$ did not change, we simply 
added new functions; whereas here the domain is a priori constantly changing, so things have a slightly different character.  

	\begin{defn}
	A \emph{partial} $\epsilon$-type of a graph or hypothesis class means that we allow the function 
	$\trv$ to be partial. 
	\end{defn}

	\begin{lemma}[Types can be extended] \label{extending-types}
	Let $\bfs$ be a partial $\epsilon$-type of a graph $G$. Then
	$\bfs$ can be extended to an $\epsilon$-type, that is, there is an $\epsilon$-type $\trv \supseteq \bfs$. 
	In fact, $\bfs$ can be extended to $V \cup \{ B : B $ is a weighted $\epsilon$-good subset of $G$ $\}$. 
	\end{lemma}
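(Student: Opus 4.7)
The plan is to prove this via a standard compactness argument in the product space $\{0,1\}^{V'}$, where $V' := V \cup \{B : B \text{ is a weighted } \epsilon\text{-good subset of } G\}$. Observe first that for any $\epsilon$-vertex $A$, the function $\trv_A$ makes sense on all of $V'$: for a weighted $\epsilon$-good subset $B$, the value $\trv(A,B)$ is defined by the $\epsilon$-excellence of $A$ (Definition \ref{d:epsilon-vertex}(4)), so set $\trv_A(B) := \trv(A,B)$. Thus any potential ``$\epsilon$-type on $V'$'' has a well-defined meaning as a function $V' \rightarrow \{0,1\}$ finitely realized by such $\trv_A$.

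Equip $\{0,1\}^{V'}$ with the product topology, under which it is compact. Let $D := \dom(\bfs)$, and let $C_D := \{\trv \in \{0,1\}^{V'} : \trv \rstr D = \bfs\}$, which is closed. For each finite $W \subseteq V'$, let
$C_W := \{\trv \in \{0,1\}^{V'} : \trv_A \rstr W = \trv \rstr W \text{ for some $\epsilon$-vertex } A \text{ of } G\}$,
which is clopen as it depends on only finitely many coordinates. Any element of $C_D \cap \bigcap_W C_W$ (with $W$ ranging over all finite subsets of $V'$) is an extension of $\bfs$ to $V'$ which restricts to an $\epsilon$-type on $V$ in the sense of Definition \ref{epsilon-type-G}, giving both conclusions of the lemma at once.

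For the finite intersection property, given finite $W_1, \dots, W_k \subseteq V'$ set $W := W_1 \cup \dots \cup W_k$, so that $C_W \subseteq \bigcap_i C_{W_i}$. Since $\bfs$ is a partial $\epsilon$-type and $W \cap D$ is finite, there is an $\epsilon$-vertex $A$ with $\trv_A \rstr (W \cap D) = \bfs \rstr (W \cap D)$. Define $\trv$ by $\trv(v) := \trv_A(v)$ for $v \in W$, $\trv(v) := \bfs(v)$ for $v \in D \setminus W$, and $\trv(v) := 0$ elsewhere; the cases agree on $W \cap D$ by the choice of $A$, so $\trv \in C_D \cap C_W$. Compactness then yields the desired extension. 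The main conceptual point---rather than a serious obstacle---is recognizing that the partial $\epsilon$-type hypothesis packages exactly the finite-consistency data needed for compactness, while passage to $V'$ is automatic because $\trv_A$ already extends naturally to all weighted $\epsilon$-good subsets.
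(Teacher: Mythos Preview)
Your proof is correct and takes a genuinely different route from the paper's. The paper argues by transfinite induction: it enumerates the points of $V' \setminus \dom(\bfs)$ and at each successor stage shows that at least one of the two possible extensions (by $0$ or by $1$) remains finitely realizable by $\epsilon$-vertices, via the standard ``if both fail, union the two finite witnesses to get a contradiction'' step. You instead package the whole argument as a finite intersection property in the compact space $\{0,1\}^{V'}$ and invoke Tychonoff once. Your approach is cleaner and avoids the transfinite bookkeeping; the paper's approach makes the underlying dichotomy at each new point more explicit. They are of course two standard presentations of the same model-theoretic idea (ultrafilter/compactness versus stepwise maximal consistent extension), so neither buys anything the other cannot reach, but yours is the more economical write-up. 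Your observation that $\trv_A$ automatically extends to all weighted $\epsilon$-good $B$ by excellence is exactly what makes the passage to $V'$ free in both arguments.
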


	\begin{proof}
	It will be notationally convenient to identify functions with their graphs.  
	Enumerate the elements of 
	\[ \mbox{ $V \cup \{ B : B $ is a weighted $\epsilon$-good subset of $G \} \setminus \dom(\bfs)$ } \]  
	by the cardinal $\kappa \geq |G|$)  as
	\[ \langle c_\alpha : \alpha < \kappa \rangle.  \]
	Let us show by induction on $\alpha < \kappa$ that we can build an increasing
	continuous chain of partial $\epsilon$-types $\trv_\alpha$ such that:
	\begin{enumerate}
	\item $\trv_0 = \bfs$
	\item each $\trv_\alpha$ is an $\epsilon$-type, i.e. finitely realized by $\epsilon$-vertices  
	\item $\beta < \alpha \implies \trv_\beta \subseteq \trv_\alpha$
	\item if $\alpha$ is a limit ordinal then $\trv_\alpha = \bigcup \{ \trv_\beta : \beta < \alpha \}$ 
	\item if $\alpha = \beta+1$ then $c_\beta \in \dom(\trv_\alpha)$.
	\end{enumerate}
	For $\alpha = 0$ and $\alpha$ limit this is easy as being finitely realized by 
	$\epsilon$-vertices is preserved under unions of chains.

	Let $\alpha = \beta + 1$. If $c_\beta \in \dom(\trv_\beta)$, set 
	$\trv_\alpha = \trv_\beta$ and we are done. 
	Otherwise, we ask if $\trv_\beta \cup \{ (c_\beta, 1) \}$ is finitely realizable 
	by $\epsilon$-vertices.  
	If not, there is a finite $\trv^\prime \subseteq \trv_\beta$ so that
	$\trv^\prime \cup \{ (c_\beta, 1) \}$ is not realized by any $\epsilon$-vertex of $\gee$.  
	Likewise if $\trv_\beta \cup \{ (c_\beta, 0) \}$ is not finitely realizable by 
	$\epsilon$-vertices, there is a finite $\trv^{\prime\prime} \subseteq \trv_\beta$ such that 
	$\trv^{\prime\prime} \cup \{ (c_\beta, 0) \}$ is not finitely realizable by $\epsilon$-vertices. 
	But then $\trv^\prime \cup \trv^{\prime\prime}$ cannot be finitely realized 
	by $\epsilon$-vertices, because
	if it were, say by $A$, this same $A$ would have to take some value on $c_\beta$
	which would contradict one of our two assumptions.  
	So one of the two additions must be possible, completing this step and hence the proof. 
	\end{proof}

	\begin{theorem} \label{t:saturation2} 
	Let $\gee$ be a $k$-stable graph, $\epsilon < 1/4$ and $\gee_\infty(\epsilon)$ its 
	saturation. Suppose $\gee_\infty(\epsilon)$ is also Littlestone. Then 
	$\gee_\infty(\epsilon)$ realizes all of its $\epsilon$-types. 
	\end{theorem}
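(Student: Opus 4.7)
The plan is to parallel the proof of Theorem~\ref{t:saturation1} for hypothesis classes, adapting it to the graph setting where the goal is to produce a weighted $\epsilon$-\emph{excellent} set (an $\epsilon$-vertex) rather than merely a weighted $\epsilon$-good set. The device that will force this extra structure is Lemma~\ref{extending-types}: by extending the given $\epsilon$-type $\trv$ so that its domain also includes the weighted $\epsilon$-good subsets of $\gee_\infty$, any weighted set of vertices representing the extended type on $V_\infty$ will automatically be $\epsilon$-good, and any weighted set representing it on the $\epsilon$-good-set part of the domain will automatically be $\epsilon$-excellent.

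Concretely, I would proceed in four steps. First, use Lemma~\ref{extending-types} to extend $\trv$ to a partial $\epsilon$-type $\tilde\trv$ whose domain includes $V_\infty$ together with all weighted $\epsilon$-good subsets of $\gee_\infty$, preserving finite realization by $\epsilon$-vertices. Second, prove the analog of Claim~\ref{claim3a}: every $\epsilon$-vertex $A$ of $\gee_\infty$ is represented by an actual vertex $v_A \in V_\infty$, where by construction of $E_{n+1}$ together with Remark~\ref{ewd} the value $h_{v_A}$ coincides with $\trv_A$ on both vertices and $\epsilon$-good sets. Consequently, $\tilde\trv$ is finitely realized by actual vertices of $\gee_\infty$ on the enlarged domain. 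Third, view $\gee_\infty$ as a hypothesis class over this enlarged domain, with each $v\in V_\infty$ acting on an $\epsilon$-good set $B$ via $h_v(B)=\trv(B,v)$, and verify that this enlarged class remains Littlestone. Fourth, apply the Littlestone Minimax theorem~\ref{t:lmm}: condition (A) holds trivially because finite realization is exact, and (B) then returns a weighted set $H=\{(v_i,\gamma_i):i<k\}$ with $\trv_H = \tilde\trv$ on the enlarged domain. The restriction of $H$ to $V_\infty$ witnesses that $H$ is $\epsilon$-good; the restriction to $\epsilon$-good sets witnesses that $H$ is $\epsilon$-excellent; so $H$ is an $\epsilon$-vertex of $\gee_\infty$, hence by the second step is represented by a vertex of $V_\infty$, which realizes $\trv$.

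The main obstacle I expect is the Littlestone property of the enlarged class in the third step. Here I plan to argue by a rounding technique analogous to Lemmas~\ref{lemma2a}--\ref{lemma2c}: from a shattered tree of height $m$ in the enlarged class, each internal $\epsilon$-good set $A$ can be replaced by a vertex $w\in\supp(A)$ agreeing with $A$'s $\trv$-value against all $2^{m-|\eta|}$ leaves below it, provided $\epsilon\cdot 2^m < 1$, yielding a shattered tree of the same height in $\gee_\infty$; this bounds the enlarged Littlestone dimension in terms of $\ldim(\gee_\infty)$ and $\log_2(1/\epsilon)$. A secondary subtlety is the second step: verifying that an $\epsilon$-vertex of $\gee_\infty$ supported on $V_n$ is in fact also an $\epsilon$-vertex of $\gee_n$, because the $\epsilon$-good sets of $\gee_n$ against which excellence must be tested may differ from those of $\gee_\infty$; handling this cleanly likely requires a careful inductive argument tracking the relationship between $\epsilon$-good sets at successive stages of the saturation.
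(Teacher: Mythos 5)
Your plan mirrors the paper's proof of Theorem~\ref{t:saturation2} quite closely: the paper also extends $\trv$ via Lemma~\ref{extending-types} to the domain $V_\infty \cup \{B : B\text{ a weighted }\epsilon\text{-good set of } \gee_\infty\}$, then shows (Claim~\ref{o:easier} and Corollary~\ref{c:claim1x}) that every $\epsilon$-vertex of $\gee_\infty(\epsilon)$ already lies in $V_\infty$, applies Littlestone Minimax to get a weighted $\epsilon$-good $A$ representing the extended type on the enlarged domain, and observes that because $A$ has opinions on every $\epsilon$-good $B$ it is $\epsilon$-excellent, hence an $\epsilon$-vertex, hence in $V_\infty$. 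Your ``secondary subtlety'' (that an $\epsilon$-vertex of $\gee_\infty$ supported on $V_n$ must also be an $\epsilon$-vertex of $\gee_n$, given that the pool of $\epsilon$-good sets may grow as $n$ grows) is exactly what the paper's Claim~\ref{o:easier} handles via its observations $(\star)$ and $(\star\star)$, so you have correctly anticipated a step that needs care.

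Where your proposal goes beyond the paper is in flagging, and attempting to repair, the Littlestone hypothesis of the enlarged class. The paper simply invokes Littlestone Minimax on a function defined on $V_\infty \cup \{\epsilon\text{-good sets}\}$ without comment, even though the theorem's hypothesis only grants that $\gee_\infty(\epsilon)$ is Littlestone \emph{over $V_\infty$}; you are right that something must be said. However, your rounding argument does not actually deliver the stated bound. If a shattered tree of height $m$ lives in the enlarged class, replacing an internal $\epsilon$-good set by a vertex of its support requires the union bound $\epsilon \cdot 2^{m-1} < 1$, i.e.~$m < \log_2(1/\epsilon) + 1$. So the rounding gives only: \emph{if} $m \le \log_2(1/\epsilon)$ and $\ldim(\gee_\infty) < m$, \emph{then} the enlarged class has no shattered tree of height $m$. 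For $m > \log_2(1/\epsilon)$ the rounding is silent, so when $\ldim(\gee_\infty) \ge \log_2(1/\epsilon)$ (which the hypothesis $\epsilon < 1/4$ does not rule out) the argument gives no bound on the enlarged Littlestone dimension at all, and in particular does not show it is finite. Passing to threshold dimension and Hodges--Shelah runs into the same wall, since the analogous rounding needs $k < 1/\epsilon$. Closing this gap would require either an additional restriction on $\epsilon$ in terms of $\ldim(\gee_\infty)$ (as in the preservation theorems of \S\ref{s:small-epsilon}), or some argument showing that the thresholded-average structure of the new domain points cannot increase the Littlestone dimension past a finite bound; neither is supplied in your proposal, and neither is made explicit in the paper.
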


	The proof will depend on several claims, slightly different than Theorem  \ref{t:saturation1}. 

\begin{claim} \label{o:easier} 
If $V = \{ (v_i, \gamma_i) : i < m \}$ is an $\epsilon$-vertex of $G_\infty(\epsilon)$ then  
it is also an $\epsilon$-vertex of $G_n(\epsilon)$ for some $n < \omega$. 
\end{claim}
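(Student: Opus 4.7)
The plan is to use that $V$ is a finite weighted set to pin its support down to some finite stage. Since $\{v_i : i < m\}$ is a finite subset of $V_\infty = \bigcup_{n<\omega} V_n$, I take $n_0 < \omega$ minimal with $\{v_i : i < m\} \subseteq V_{n_0}$ and aim to show directly that $V$ is already an $\epsilon$-vertex of $\gee_{n_0}$, which supplies the finite $n$ in the statement.

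To do this I would verify the two clauses of Definition \ref{d:epsilon-vertex} for $V$ as a weighted subset of $V_{n_0}$. The $\epsilon$-goodness clause is immediate: for each $v \in V_{n_0}$, the required concentration of $\sum\{\gamma_i : v_i \sim v\}$ (or its complement) near $1-\epsilon$ is a special case of the same condition in the ambient $\gee_\infty$, since $V_{n_0} \subseteq V_\infty$. For the $\epsilon$-excellence clause I would argue that any weighted $\epsilon$-good set $A$ of $\gee_{n_0}$ is automatically an $\epsilon$-good set of $\gee_\infty$, so that $V$'s assumed excellence in $\gee_\infty$ applies to $A$ and yields the desired concentration of $\sum\{\gamma_i : \trv(A, v_i) = 1\}$.

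The upgrade of $A$ from $\gee_{n_0}$-good to $\gee_\infty$-good would be proved by induction on $n \geq n_0$, checking at each step that $A$ remains $\epsilon$-good at every fresh vertex $w \in V_{n+1} \setminus V_n$. Each such $w$ is the representative of some weighted $\epsilon$-excellent set $W$ of $V_n$, so by the construction $u_j \sim w$ iff $\trv(W, u_j) = 1$; hence one must show that $\sum\{\mu_j : \trv(W, u_j) = 1\}$ concentrates near $0$ or $1$. By the inductive hypothesis $A$ is already $\epsilon$-good on each element of $W$, so $\trv(A, \cdot)$ is defined on $W$ and $\trv(W, A)$ is available from the $\epsilon$-excellence of $W$; I would then double-count the total $\gamma\mu$-weighted edge mass between $A$ and $W$ in the style of Remark \ref{ewd}, using the standing convention $\epsilon < 1/4$ to force the concentration.

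The main obstacle I expect is precisely this last double-count. Remark \ref{ewd} as stated assumes both sides are $\epsilon$-excellent, whereas here $A$ is only $\epsilon$-good, so one direction of the double-count cleanly forbids $p := \sum\{\mu_j : \trv(W, u_j) = 1\}$ from sitting near the wrong extreme, but ruling out intermediate values of $p$ will require combining the two possible values of $\trv(W, A)$ and leveraging that $\epsilon < 1/4$ is safely below $2 - \sqrt{3}$. Should a direct bound fall short, a fallback is to use Lemma \ref{extending-types} to extend $v_i \mapsto \trv(A, v_i)$ to a partial $\epsilon$-type of $\gee_\infty$ and to produce a bona fide $\epsilon$-good subset of $\gee_\infty$ agreeing with $A$ on $V_{n_0}$, to which $V$'s excellence in $\gee_\infty$ applies directly and then transfers back to $A$.
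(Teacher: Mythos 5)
Your approach matches the paper's: fix $n$ so that the support of $V$ lies in $V_n$, observe that goodness in $\gee_\infty$ restricts to goodness in $\gee_n$ for free, and identify that the real content is upgrading ``$A$ is weighted $\epsilon$-good in $\gee_n$'' to ``$A$ is weighted $\epsilon$-good in $\gee_\infty$'' so that $V$'s excellence in $\gee_\infty$ can be quoted. This upgrade is exactly the paper's assertion $(\star\star)$, which you propose to verify inductively at each fresh vertex $w$ coming from an $\epsilon$-excellent $W$ of $\gee_n$.

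You have also correctly put your finger on the crux, and your suspicion about the double count is warranted. Excellence of $W$ hands you that $\sum\{\rho_i : \trv(A, w_i) = 1\}$ concentrates, i.e.\ most of $W$ agrees about $A$, so $\trv(W,A)$ is defined. What $A$'s goodness at the fresh vertex $w$ requires is the other-side concentration: that $p := \sum\{\mu_j : \trv(W, a_j) = 1\}$ is $<\epsilon$ or $\geq 1-\epsilon$. Running the Remark~\ref{ewd}-style double count with $M$ the weighted edge mass and, say, $\trv(W,A)=1$ gives $(1-\epsilon)^2 \leq M < \epsilon + (1-\epsilon)p$, hence $p > \bigl((1-\epsilon)^2 - \epsilon\bigr)/(1-\epsilon)$. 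For every $\epsilon>0$ this lower bound is strictly less than $1-\epsilon$: it rules out $p<\epsilon$ (the ``wrong extreme,'' as you note), but it never forces $p\geq 1-\epsilon$, and intermediate $p$ is not excluded by case-splitting on $\trv(W,A)$ nor by using $\epsilon<1/4<2-\sqrt{3}$; that constraint only enters Remark~\ref{ewd} once both $\trv(A,B)$ and $\trv(B,A)$ are already known to be defined, which requires both sets to be excellent. The paper's own justification of $(\star\star)$ passes over the same point, asserting that the added vertices ``have $\epsilon$-majority opinions on any weighted $\epsilon$-good set''; by Definition~\ref{d:epsilon-vertex} that literally gives the $W$-side concentration $\trv(W,A)$, not the $A$-side quantity $p$.

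Your fallback via Lemma~\ref{extending-types} is the right flavor of idea but, as written, risks circularity: producing a weighted $\epsilon$-good $A'$ of $\gee_\infty$ realizing an $\epsilon$-type uses Littlestone Minimax as in Claim~\ref{c:claim4x}, whose hypothesis of finite realizability by actual vertices of $\gee_\infty$ is obtained in the paper through Corollary~\ref{c:claim1x} --- a consequence of precisely the claim you are trying to prove. To make the fallback self-contained you would need to show directly that the finite partial function $v_i \mapsto \trv(A, v_i)$ is finitely realized by vertices of $\gee_\infty$, which does not obviously follow from $A$ being good only in $\gee_n$. So: same route as the paper, crux correctly identified, but the proposed double count does not close the gap, and the fallback needs reworking to avoid circularity.
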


\begin{proof} 
Let $n < \omega$ be such that $\{ v_i : i < m \} \subseteq V_n$. So $V$ is a 
weighted set of elements of $G_n(\epsilon)$. Considered as a weighted set of elements of $G_\infty(\epsilon)$, 
$V$ is good, and indeed excellent. We need to check that both remain true in $G_n(\epsilon)$. 
Goodness is easy: for any $x \in G_\infty(\epsilon)$, $\sum \{ \gamma_i : \trv(x,V) = 1 \}$ is 
either $<\epsilon$ or $\geq 1-\epsilon$, and so a fortiori this is true for the $x \in G_n(\epsilon)$. 
In other words:

\begin{quotation} ($\star$) if $A$ is a weighted set of elements of $G_n(\epsilon)$, and  
$A$ is a weighted $\epsilon$-good set in $G_\infty(\epsilon)$, 
then also $A$ is a weighted $\epsilon$-good set in $G_n(\epsilon)$. 
\end{quotation}  

The subtlety about excellence is that we need to quantify over all $\epsilon$-good weighted sets of the given graph, 
and a priori those may be different in $G_n(\epsilon)$. \noteom{We show they are in fact the same.} Suppose $B$ were  
an $\epsilon$-good weighted set in $G_n(\epsilon)$ which ceased to be good in $G_{n+1}(\epsilon)$. 
(Since we made no assumptions about the minimality of $n$ this is the general case.) This would mean 
that we added some new vertex which split $B$ in an balanced way. But the vertices we add 
are all $\epsilon$-vertices, that is, they arise as weighted $\epsilon$-excellent sets of $G_n(\epsilon)$. 
In particular they have $\epsilon$-majority opinions on any weighted $\epsilon$-good set. We have shown that:  

\begin{quotation}
\noindent $(\star\star)$ If $C$ is a weighted set of $G_n(\epsilon)$, then 
$C$ is an $\epsilon$-good weighted set of $G_\infty(\epsilon)$ if and only if it is an $\epsilon$-good weighted set of $G_n(\epsilon)$.  
\end{quotation}
So $V$ is a weighted $\epsilon$-excellent set of $G_n(\epsilon)$, i.e. an $\epsilon$-vertex of $G_n(\epsilon)$.  
\end{proof}

	\begin{cor} \label{c:claim1x}
	If $v$ is an $\epsilon$-vertex of $\mcg_\infty(\epsilon)$ then $v \in V_\infty$.
	\end{cor}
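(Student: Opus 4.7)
The plan is to combine Claim \ref{o:easier} with the explicit construction of $V_{n+1}$ in Definition \ref{d:epsilon-saturation-G}; once $v$ is pushed down to a finite stage, its representative is literally added at the next stage. First I would apply Claim \ref{o:easier} to obtain some $n < \omega$ such that $v$ is already an $\epsilon$-vertex of $\gee_n(\epsilon)$. If $|v| = 1$, say $v = \{u\}$ with $u \in V_n$, then under the parenthesis-dropping convention stated just before Definition \ref{epsilon-type-G} we identify $v$ with $u \in V_n \subseteq V_\infty$ and we are done with this case.

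For the main case $|v| > 1$, I would invoke Definition \ref{d:epsilon-saturation-G}(2a): the set $W$ used to form $V_{n+1} = V_n \cup W$ is required to contain, for each $\epsilon$-vertex $A$ of $V_n$ with $|A|>1$, a unique representative $B \in W$ such that $\trv_B = \trv_A$. Applying this requirement to our particular $v$ produces a representative $B_v \in W \subseteq V_{n+1}$ with $\trv_{B_v} = \trv_v$, and hence $B_v \in V_{n+1} \subseteq V_\infty$.

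The final step is to invoke the identification convention (also stated in the paragraph preceding Definition \ref{epsilon-type-G}, and justified by the footnote there) under which two $\epsilon$-vertices with the same $\sim_\epsilon$-neighborhood function are treated as the same element of the saturation. Under this identification, $v$ and $B_v$ coincide, so $v \in V_\infty$. The only point requiring care is this bookkeeping about representatives; no genuine obstacle arises, because Claim \ref{o:easier} has already done the substantive work of showing that excellence of $v$ does not require vertices from beyond some finite $V_n$.
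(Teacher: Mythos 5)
Your proof is correct and takes essentially the approach the paper intends: push $v$ down to a finite stage $n$ via Claim \ref{o:easier}, then use the construction of $V_{n+1}$ in Definition \ref{d:epsilon-saturation-G}(2a) together with the representative-identification convention to conclude that $v$ (or its chosen representative) lands in $V_{n+1} \subseteq V_\infty$. The paper leaves this unwritten as a corollary, and you have filled in exactly the intended bookkeeping, including the correct handling of the singleton case via the parenthesis-dropping convention.
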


	\begin{cor} \label{c:claim2x} If
	$\trv$ is an $\epsilon$-type of $\mcg_\infty(\epsilon)$ then $\trv$ is finitely realized 
	in $\mcg_\infty(\epsilon)$. 
	\end{cor}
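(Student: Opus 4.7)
The plan is to assemble the result directly from Definition~\ref{epsilon-type-G}(3) and Corollary~\ref{c:claim1x}. Given an $\epsilon$-type $\trv$ of $\mcg_\infty(\epsilon)$ and an arbitrary finite $W \subseteq V_\infty$, Definition~\ref{epsilon-type-G}(3) furnishes an $\epsilon$-vertex $A$ of $\mcg_\infty(\epsilon)$ with $\trv_A \rstr W = \trv \rstr W$; Claim~\ref{o:easier} tells us $A$ is already an $\epsilon$-vertex of some $G_m(\epsilon)$, so Corollary~\ref{c:claim1x} supplies a representative vertex $v_A \in V_\infty$ with $\trv_{v_A} = \trv_A$ as $\epsilon$-vertex functions. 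My task then reduces to showing $\trv_{\{(v_A,1)\}} \rstr W = \trv_A \rstr W$, which delivers $|W|$-realization, and arbitrariness of $W$ gives finite realization.

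The substance is verifying that, pointwise on $W$, the edge relation from $v_A$ inside $\mcg_\infty(\epsilon)$ matches $\trv_A$. To do this I would pick $n$ large enough that $v_A$ and all of $W$ lie in $V_n$, and examine each edge $(v_A,w)$ across the staged construction of $E_\infty$. If $A$ was already a singleton in $V_0$, the edge is the original edge and there is nothing to check. Otherwise $v_A$ was added at some stage $n_0+1$ to represent a nontrivial $\epsilon$-excellent set: for $w \in V_{n_0}$, Definition~\ref{d:epsilon-saturation-G}(2)(b)(ii) makes $(v_A,w) \in E_{n_0+1}$ iff $v_A \sim_\epsilon w$, which by the symmetry established in Remark~\ref{ewd} and the fact that $v_A$ represents $A$ is iff $\trv_A(w) = 1$; for any $w$ added at a later stage, the analogous rule at the appropriate stage gives the same equivalence.

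The main ``obstacle'' is less conceptual than bookkeeping: one has to trace edges across the inductive construction and check that the representative $v_A$, originally chosen to match $A$'s weighted-majority function, continues to behave that way in $\mcg_\infty(\epsilon)$. The key technical inputs are Claim~\ref{o:easier}, which locates $A$ at a finite stage so that $v_A$ appears in $V_\infty$, and the edge-definition clause in Definition~\ref{d:epsilon-saturation-G}(2)(b)(ii), which pins new edges to $\sim_\epsilon$. With these in hand the corollary follows directly, as is appropriate: the real work has been absorbed into Claim~\ref{o:easier} and Corollary~\ref{c:claim1x}.
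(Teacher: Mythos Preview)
Your proposal is correct and follows the paper's intended route: the corollary is stated without proof as an immediate consequence of Corollary~\ref{c:claim1x}, and you have simply unpacked what ``immediate'' means here---an $\epsilon$-type is by definition finitely realized by $\epsilon$-vertices, and Corollary~\ref{c:claim1x} converts each such $\epsilon$-vertex into an actual vertex of $V_\infty$. Your second paragraph's bookkeeping for the ``later stage'' case is a bit compressed (making it fully precise requires an induction on stages together with Remark~\ref{ewd} and the observation $(\star\star)$ from the proof of Claim~\ref{o:easier}), but the outline is sound and matches the paper's treatment.
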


	The subtlety of the next claim is that simply applying Littlestone Minimax to an $\epsilon$-type 
	returns some weighted $\epsilon$-good $A$, but Claim \ref{c:claim2x} only tells us that weighted $\epsilon$-\emph{excellent} 
sets belong to $B$. 

	\begin{claim} \label{c:claim4x}
	If $\trv$ is finitely realized in $\mcg_\infty(\epsilon)$ then $\trv$ is realized in $\mcg_\infty(\epsilon)$. 
	\end{claim}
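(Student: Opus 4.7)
The plan is to parallel the proof of Claim \ref{claim4a} inside Theorem \ref{t:saturation1}, with one extra step that handles precisely the gap flagged in the hint: Littlestone Minimax produces a weighted $\epsilon$-good representative, whereas only weighted $\epsilon$-excellent sets correspond, via Claim \ref{o:easier} and Corollary \ref{c:claim1x}, to vertices of $V_\infty$.

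First I would apply LMM (Theorem \ref{t:lmm}) to $\mcg_\infty(\epsilon)$, which is Littlestone by hypothesis. Condition (A) holds trivially for any $\epsilon_L>0$: given any finitely supported probability measure on $V_\infty$, the finite realization of $\trv$ supplies a vertex $v\in V_\infty$ agreeing with $\trv$ on the whole support, so its weighted error is $0$. Taking $\epsilon_M=\epsilon$, condition (B) returns a weighted $\epsilon$-good set $H=\{(v_i,\gamma_i):i<n\}$ in $\mcg_\infty(\epsilon)$ with $\trv_H(v)=\trv(v)$ for every $v\in V_\infty$. If this $H$ happened to be $\epsilon$-excellent, Claim \ref{o:easier} would place $H$ inside some $\mcg_m$ and the construction of $V_{m+1}$ would hand us a vertex of $V_\infty$ realizing $\trv$; the whole difficulty is that $H$ need not be excellent.

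To force excellence, I would extend $\trv$ via Lemma \ref{extending-types} to an $\epsilon$-type $\trv^*$ on $V_\infty\cup\{B:B\text{ is a weighted }\epsilon\text{-good subset of }V_\infty\}$, still finitely realized by $\epsilon$-vertices and hence, via Corollary \ref{c:claim1x}, by vertices in $V_\infty$. Then I would reapply LMM to the \emph{extended} hypothesis class where each $v\in V_\infty$ acts as the function $f_v$ given by $f_v(u)=\trv(v,u)$ on $V_\infty$ and $f_v(B)=\trv(B,v)=\trv_B(v)$ on weighted $\epsilon$-good sets. If this extended class is Littlestone and $\trv^*$ is finitely realized in it, LMM returns a weighted $\epsilon$-good set $H'$ over the extended domain which, translated back to $V_\infty$, is simultaneously weighted $\epsilon$-good (representing $\trv$ on $V_\infty$) and weighted $\epsilon$-excellent (the $H'$-weighted majority of $\trv_B$ matches $\trv^*(B)$ up to error $\epsilon$ for every weighted $\epsilon$-good $B$). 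Claim \ref{o:easier} and Corollary \ref{c:claim1x} then finish the proof exactly as in the easy case.

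The hard part will be verifying that this extended hypothesis class is Littlestone of finite, possibly larger, dimension. The intended argument is that any shattered special tree in the extended class whose internal nodes include a weighted $\epsilon$-good set $B$ can be projected to a shattered tree in $\mcg_\infty(\epsilon)$: one replaces $B$ with a vertex $u_B\in V_\infty$ whose neighborhood on the finite leaf set matches $\trv_B$, with existence of $u_B$ guaranteed by finite realization of $\trv^*$ together with the saturation structure. Carrying this replacement out cleanly, tracking any inflation in tree height, and avoiding a circular appeal to the realization statement we are trying to prove, is the technical core of the argument.
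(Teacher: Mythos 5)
Your plan is essentially the paper's proof. The paper also extends $\trv$ via Lemma~\ref{extending-types} to a type $\trv_*$ over the domain $V_\infty \cup \{ B : B \text{ a weighted $\epsilon$-good set of } \mcg_\infty(\epsilon) \}$, notes via Corollary~\ref{c:claim1x} that $\trv_*$ is finitely realized by vertices of $V_\infty$, applies Littlestone Minimax on the extended domain to get a weighted $\epsilon$-good set $A$ representing $\trv_*$, observes that $A$ is automatically $\epsilon$-excellent because it votes coherently on every weighted $\epsilon$-good $B$, and invokes Corollary~\ref{c:claim1x} again to land $A$ in $V_\infty$. Your preliminary first step --- applying LMM to $\trv$ without the extension and then noticing the output need not be excellent --- is diagnostic scaffolding rather than part of the argument; the paper goes straight to the extension.

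The concern you raise at the end is the genuinely interesting part of your write-up, and it is not resolved in your sketch. Applying Theorem~\ref{t:lmm} on the extended domain requires that the class $\{ f_v : v \in V_\infty \}$, viewed as functions on $V_\infty \cup \{\text{weighted $\epsilon$-good sets}\}$, be Littlestone, and this does not follow immediately from the hypothesis that $\mcg_\infty(\epsilon)$ is Littlestone. The projection idea you propose (replacing a good-set internal node $B$ by an actual vertex) does not work as you phrase it: you suggest taking $u_B \in V_\infty$ whose neighborhood on the leaves matches $\trv_B$, but such $u_B$ is not supplied by finite realizability of $\trv^*$ (that realizes $\trv^*$, not $\trv_B$), and appealing to saturation for $\trv_B$ is exactly the circularity you flag, since $B$ is only good, not excellent. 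The projection that does work is replacing $B$ by one of its own members that votes correctly on every leaf below $B$; but that only succeeds when the number of descendant leaves is below $1/\epsilon$, so it controls trees of height below roughly $\log_2(1/\epsilon)$ and does not give an unconditional bound for $\epsilon$ up to $1/4$. The paper's proof does not make this step explicit either, so you have correctly put your finger on a point that, to be made fully rigorous, needs an additional argument (for instance along the lines of the threshold/Hodges-Shelah projections used elsewhere, or a refinement along the lines of Section~\ref{s:intermediate-epsilon}); as written, your proposal is correct in approach and correctly identifies where the remaining work lies.
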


	\begin{proof}
	Recall that $V_\infty$ denotes the vertex set of $\mcg_\infty(\epsilon)$. 
	Let $\trv$ be an $\epsilon$-type of $\mcg_\infty(\epsilon)$ and by \ref{extending-types},  
	let $\trv_*$ be an extension to the domain 
	\[ \mbox{ $V_\infty \cup \{ B : B $ is a weighted $\epsilon$-good set of $\mcg_\infty(\epsilon)$ $\}$. } \] 
[Recall that weighted $\epsilon$-good sets are always finite.] 
	Since by construction $\trv_*$ is finitely realized by $\epsilon$-vertices of 
	$\mcg_\infty(\epsilon)$, by \ref{c:claim1x}, $\trv_*$ is finitely realized in $\mcg_\infty(\epsilon)$. 
	Hence by Littlestone Minimax $\trv_*$ is realized by some weighted $\epsilon$-good $A$. But 
	now by construction $A$ has an opinion about every weighted $\epsilon$-good $B$ of $\mcg_\infty(\epsilon)$. 
	Hence $A$ is a weighted $\epsilon$-excellent set, in other words an $\epsilon$-vertex, which realizes the type. 
	Invoke \ref{c:claim1x} one more time to conclude $A \in V_\infty$, which completes the proof.  
	\end{proof}

	We can now complete the proof of the theorem:

	\begin{proof}[Proof of \ref{t:saturation2}] By Claims \ref{c:claim2x} and \ref{c:claim4x}.  
	\end{proof}

	\sbr

	 Theorem $\ref{sat:hypothesis}$ goes through for graphs with a similar proof: we need 
to make the natural updates to use ``excellent'' rather than ``good'' and to deal with the fact that 
things are no longer bipartite.

\begin{lemma} Let $s \in \mathbb{N}$, $s \geq 2$, and $\epsilon < 1/2^s$. 
Let $\langle a_\eta : \eta \in {^{s>}2} \rangle$ and $\langle b_\rho : \rho \in {^s 2} \rangle$ 
be a special tree of height $s$ in $G_\infty(\epsilon)$. Then there is also a special tree of height 
$s$ in $G$.
\end{lemma}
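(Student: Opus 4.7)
The approach mirrors Lemmas \ref{lemma2a}--\ref{lemma3b}, but with the essential new feature that both the internal nodes $a_\eta$ and the leaves $b_\rho$ of the tree can simultaneously lie in $V_\infty \setminus V$, so both sides must be replaced at once. The needed symmetry is supplied by Remark \ref{ewd}, and $\epsilon$-excellence (rather than mere goodness) will play the role that goodness plays in the hypothesis-class arguments.

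I will induct on the minimal $n < \omega$ such that every tree vertex lies in $V_n$, showing at each step that a special tree of height $s$ in $G_n(\epsilon)$ yields a special tree of height $s$ in $G_{n-1}(\epsilon)$; iterating gives a tree in $G_0 = G$. The base case $n=0$ is immediate. For the induction step, let $S$ be the collection of tree vertices lying in $V_n \setminus V_{n-1}$. By Definition \ref{d:epsilon-saturation-G}, each $v \in S$ is a weighted $\epsilon$-excellent set $\{(v_i, \gamma_i^v) : i < m_v\}$ whose elements lie in $V_{n-1}$. Fix an enumeration $v^{(1)}, \ldots, v^{(N)}$ of $S$ and process them in order, at step $k$ choosing a single representative $\hat v^{(k)} \in \supp(v^{(k)})$, while leaving tree vertices already in $V_{n-1}$ untouched.

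At step $k$, for each tree-required edge between $v^{(k)}$ and another tree vertex $w$, we exclude a subset of $\supp(v^{(k)})$ from being $\hat v^{(k)}$. There are two cases. First, if $w$ is a single $V_{n-1}$-vertex, either originally or as an already-chosen $\hat v^{(j)}$ with $j<k$, then $\epsilon$-goodness of $v^{(k)}$ at $w$ bounds the weight of $v_i$'s with the wrong edge to $w$ by $<\epsilon$, provided $\trv(v^{(k)}, w)$ equals the required edge value; for $w = \hat v^{(j)}$ this will be arranged via the second case at step $j$. Second, if $w = v^{(k')}$ with $k' > k$ is itself still an $\epsilon$-vertex, we proactively demand $\trv(v^{(k')}, \hat v^{(k)}) = \trv(v^{(k)}, v^{(k')})$; $\epsilon$-excellence of $v^{(k)}$ against the weighted $\epsilon$-good set $v^{(k')}$ bounds this excluded weight by $\epsilon$, and Remark \ref{ewd} makes the $\trv$-values symmetric, so this is exactly what is needed to validate the first-case exclusion when $v^{(k')}$ is eventually processed.

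Any tree vertex has at most $2^s$ tree-required edges: a leaf has $s \leq 2^s$ such edges, and an internal node at depth $d$ has $2^{s-d} \leq 2^s$. A union bound then gives total excluded weight $< 2^s \cdot \epsilon < 1$, so some $v_i \in \supp(v^{(k)})$ avoids every exclusion, and we set $\hat v^{(k)} := v_i$. After step $N$ no tree vertex lies in $V_n \setminus V_{n-1}$, and the replaced vertices, together with the ones left fixed in $V_{n-1}$, form a special tree of height $s$ in $V_{n-1}$, completing the induction step. The main obstacle, which is where the proof is not simply a copy of Lemma \ref{lemma2a}, is verifying that the forward-looking exclusion at step $k$ (second case) correctly sets up the backward-looking exclusion at step $k' > k$ (first case); once this compatibility is confirmed using the symmetry of $\trv$ between excellent sets, the rest of the argument is a routine weighted union bound.
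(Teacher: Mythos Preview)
Your argument is correct and follows the same high-level strategy as the paper --- induct on the minimal level $n$ containing all tree vertices, and use excellence plus a union bound with the $2^s$ threshold to replace virtual vertices by actual ones in $G_{n-1}$ --- but the organization differs. The paper exploits the bipartite structure of the tree constraints (required edges run only between internal nodes and leaves, never within either side) to work in two batches: first replace every leaf $B_\rho$ by an actual $b'_\rho \in \supp(B_\rho)$, using excellence of $B_\rho$ against each of its $s$ virtual ancestors $A_\eta$ for a union bound of weight $< \epsilon s$; then, with all leaves actual, invoke the hypothesis-class Lemma~\ref{lemma2c} verbatim to replace each internal $A_\eta$ using only goodness against its $\leq 2^s$ actual leaf descendants. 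Your sequential one-at-a-time processing with forward-looking and backward-looking exclusions is more general than needed and forces you to verify the compatibility between the two cases via Remark~\ref{ewd}; the paper sidesteps this entirely because after batch one no constraint ever again involves two virtual sets. Both routes land on the same $2^s \epsilon < 1$ bound, but the paper's is shorter and reuses Lemma~\ref{lemma2c} directly.
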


\begin{proof}
Let $m <\omega$ be minimal so that 
$\{ a_\eta : \eta \in {^{s>}2} \}\cup \{ b_\rho : \rho \in {^s 2} \} \subseteq G_m(\epsilon)$; 
if $m = 0$ we are done, so assume $m = n+1$ for some $n \in \mathbb{N}$. For each 
$\eta \in {^{s>}2}$ choose a corresponding $\epsilon$-vertex $A_\eta$ of $G_n$ and for each 
$\rho \in {^{\rho}2}$ choose a corresponding $\epsilon$-vertex $B_\rho$ of $G_n$, 
so for $\eta \tlf \rho$, $\trv(A_\eta, B_\rho) = 1$ if and only if there is an edge between 
$a_\eta$ and $b_\rho$. 

Since these are virtual vertices and virtual edges, there are two steps. 
First, for each $\rho \in {^s 2}$, define an error set 
$E_\rho = \{ (b_i, \gamma_i) \in B_\rho : \trv(b_i, A_\eta) \neq \trv(B_\rho, A_\eta) \}$ 
which will have weight $<\epsilon s$. So there is $(b, \gamma) \in B_\rho \setminus E_\rho$ 
and we can set $b^\prime_\rho := b$. 
Second, now we are in the setting of having a special tree in $G_n$ with internal nodes 
$\langle A_\eta : \eta \in {^{s>}2} \rangle$ which may be virutal, 
and leaves $\langle b^\prime_\rho : \rho \in {^s 2} \rangle$ which are vertices of $G_n$, 
so as in \ref{sat:hypothesis}(2) we find a special tree of height $s$,   
$\langle a^\prime_\eta : \eta \in {^{s>}2} \rangle$, 
$\langle b^\prime_\rho : \rho \in {^{s}2} \rangle$ in the vertices of $G_n$, 
contradicting the choice of $n$.  
\end{proof}

	\begin{concl}
	    Let $\mcg$ be a graph. 
\begin{enumerate}
\item If $\ldim(\mcg) := \ell < \infty$, and 
$\epsilon < 1/2^{\ell + 1}$, then 
\[ \ldim(\mcg) = \ldim(\mcg_\infty(\epsilon)) = \ell. \]
\item If in addition $\epsilon < 1/2^{\ell+2}$, then 
\[ \thr(\mcg) = \thr(\mcg_\infty(\epsilon)). \]
\item If $\vc(\mcg) := d < \infty$, and $\epsilon < 1/2^{d+1}$, then 
\[ \vc (\mcg ) = \vc(\mcg_\infty(\epsilon)). \]

	    \end{enumerate}
	    \end{concl}

	\begin{proof}
	 To prove this, we can use a parallel series of arguments as in \ref{sat:hypothesis}, to transfer a full special tree, half graph, or VC-tree, from $\mcg_{n+1}$ to $\mcg_{n}$, and inductively, all the way back to $\mcg$.
	\end{proof}

We have seen that $\epsilon$-saturation for graphs behaves very well for small $\epsilon$.
What about intermediate $\epsilon$? Can it still happen that bounding $\epsilon$ in terms of 
the $VC$ dimension gives us some control over the Littlestone dimension of $\mcg_\infty(\epsilon)$? 

\begin{rmk} \label{int-epsilon-graphs} 
In considering extending the arguments of \S \ref{s:intermediate-epsilon} to graphs, proving an analogue of 
Claim \ref{claim:k-realizability-backtrack} doesn't make sense globally since the domain of a function 
$($that is, the neighborhood of a vertex$)$ in $G_\infty$ includes many vertices which aren't in $G$, 
hence $k$-realizability doesn't make sense out of the box. We leave it as an interesting open question 
whether it is possible to conclude that $\mcg_\infty(\epsilon)$ remains Littlestone based only on 
restricting $\epsilon$ as a function of Littlestone dimension $d$.     
\end{rmk}

	\section{Examples} \label{s:examples1}

	In this section and the next we work out some simple and some less simple 
	examples relevant to the paper.  
Any singleton is a good set, in any graph (and for any value of $\epsilon$). 

	\subsection{Cliques} \label{sa:cliques}
	There is a slight asymmetry between cliques and anticliques since we assume that graphs
	have no loops.
	If $G$ is an anticlique, every finite subset is $\epsilon$-good.
	But also notice that every good subset of $G$ realizes a type which is already realized
	by a vertex in $G$. So $G_\infty(\epsilon) = G$.

	If $G$ is a clique, every singleton and 
	every sufficiently large finite set $X \subseteq G$, i.e. of size at least $\eps^{-1}$,
	is $\eps$-good, indeed $\eps$-excellent. There exist excellent sets which realize types not already realized in $G$,
	namely that of connecting to every $v \in G$ (since no vertex of $G$ is connected to itself).
	Thus, if $\epsilon < 1/2$ and $|G| > 1/\epsilon$,
	we will have that $G_1(\epsilon)$ is $G$ along with one new vertex connected to all elements of $G$ and to itself, and
	$G_n(\epsilon) = G_{n+1}(\epsilon)$ for all $n \geq 1$.

	\subsection{Matchings} \label{sa:matchings} 
	Suppose $\mch$ is a hypothesis class representing an infinite matching,
	i.e. $\mch = \{ h_x : x \in X \}$ and $h_x(z) = 1$ iff $z=x$.
	If $H\subseteq\mch$ is such that $|H| > \epsilon^{-1}$ then $H$ is $\epsilon$-good. Thus, any singleton and any sufficiently large finite subset of $\mch$ is good. Note that, for sufficiently small $\eps$, all good sets which are not singletons represent the constant-zero function.

	Similarly to \S \ref{sa:cliques}, 
	in an infinite matching,
	for any $\epsilon$, $\mch_1$ is $\mch$
	adjoin one new hypothesis, the constant-zero function; and
	$\mch_n = \mch_{n+1}$ for all $n \geq 1$.

	A similar analysis holds for finite matchings, where say
	$|X| = |\mch| \in \mathbb{N}$.
	If $|\mch| \leq 1/\epsilon$, there are no new good sets, and
	if $|\mch| > 1/\epsilon$, the constant-zero function appears. Note that in this second case, the constant-zero function is what we have called an $\epsilon$-type, but it is \emph{not} finitely realizable in $\mch$
	since there is no function $h \in \mch$ which agrees with it on the finite set $X$.  This example continues in \S \ref{example12}.

	\subsection{Random graphs} \label{sa:random-graphs}
	We say ``$G$ is an infinite random graph'' to mean that $G$ satisfies the
	model-theoretic extension axioms $\trg$.\footnote{The axioms $\trg$ say: for each $n$, there is an axiom saying there are at least $n$ elements, and
	for each $n,m$ there is an axioms saying that for any two disjoint sets of
	$n$ and $m$ elements respectively, there is an element connected to all
	vertices in the first set and none in the second. There is a unique countable model
	up to isomorphism, which is isomorphic with probability 1 to the graph obtained by
	starting with countably many vertices and a fair coin and flipping the coin for each pair
	to determine whether or not this is an edge. This explains the model-theoretic name ``random graph'';
	however, with the axiomatic approach, one has models of arbitrarily large infinite sizes.}

	Suppose $G$ is an infinite random graph. Then:

	\begin{enumerate}
	\item any singleton is a good set.

	\item any finite $X \subseteq G$ of even size is not a good set.

	\item any finite $X \subseteq G$ of odd size $2k+1$, $k \geq 1$ is not $\frac{k}{2k+1}$-good.

	\end{enumerate}

	Thus, if $\epsilon \leq \frac{1}{3}$, the only finite sets which are $\epsilon$-good are the singletons.
	This follows by the extension axioms, and the fact that $\frac{k}{2k+1}$ attains its minimum value
	at $k=1$.
	Hence for $\epsilon \leq \frac{1}{3}$, $G = G_\infty(\epsilon)$.

	\subsection{Half-graphs} \label{sa:half-graphs}
	Suppose $G$ is an infinite half-graph, that is, $V$ is the disjoint union
	$A =\{ a_i : i \in \mathbb{N}\} \cup B = \{ b_j : j \in \mathbb{N} \}$, and
	$a_i \sim b_j$ iff $i<j$.  We make no assumptions about
	other edges.\footnote{By convention, ``half-graph'' we will mean that
	the bi-induced pattern is a half graph, and will make no assumptions about the edges among the
	$a$'s or among the $b$'s.}  Then:

	\begin{enumerate}

	\item any singleton is a good set.

	\item if $X \subseteq G$ is finite but not a singleton, and $X \subseteq A$ or
	$X \subseteq B$, then $X$ is not $\frac{1}{3}$-good.

	\item if $X \subseteq G$ is finite, and $X \cap A \neq \emptyset$ and $X \cap B \neq \emptyset$,
	then $X$ is not $\frac{1}{6}$-good.
	\end{enumerate}

	Thus, if $\epsilon \leq \frac{1}{6}$, the only finite sets which are $\epsilon$-good are the
	singletons. Here also $G(\epsilon)=G_{\infty}(\epsilon)$.

	\begin{proof}
	Since Littlestone is central to the paper and is characterized via half-graphs, we work this out precisely.
	For (1), we use the fact that all singletons are $\eps$-good. \\

	\noindent For (2), if $X\subseteq A$, let $i_{m}$ be the median of $\{i:a_{i}\in X\cap A\}$. Then, $b_{i_{m}}$ partitions $X$ in a $\frac{1}{3}$-balanced way, since the smallest part has size
	\[\left\lfloor \frac{|X|}{2} \right\rfloor=\begin{cases}
	    \frac{1}{2}\cdot |X| & \text{$|X|$ is even}\\
	    \left(\frac{1}{2}-\frac{1}{2|X|}\right)\cdot |X|\geq \frac{1}{3}\cdot|X| & \text{$|X|$ is odd}
	\end{cases}\]

	\noindent Therefore, $X$ is not $\frac{1}{3}$-good. The case where $X\subseteq B$ is analogous.\\

	\noindent For (3), one of $A\cap X$ and $B\cap X$ has size at least $\frac{1}{2}|X|$. Without loss of generality, let it be $A\cap X$. As in (2), there some $b\in B$ splitting $A\cap X$ in an $\frac{1}{3}$-balanced way. Then, $b$ partitions $X$ in a $\frac{1}{6}$-balanced way, since the minimum of
	\[\left\{|N(b,B\cap X)|+|N(b,A\cap X)|,\hspace{10pt} |N(b,B\cap X)^{c}|+|N(b,A\cap X)^{c}|\right\}\]
	has size at least $\frac{|A\cap X|}{3}\geq \frac{|X|}{6}$.
	\end{proof}

	\sbr

	In the next examples, it will be convenient to display types as $(A,B)$ where $A \cup B = X$, $A$ are the elements labeled $1$ and $B$ are the elements labeled $0$.

	\subsection{An example where $\mch_0 \neq \mch_1 \neq \mch_2$.} \label{example15}

	We work out a basic example to show that saturation can take more than one step. This will be generalized in the next example, but may be simpler to read in this first case.

	Let $\eps\in(\frac{1}{3},\frac{1}{2})$, and consider the following hypothesis class $\mathcal{H}$:
	\begin{itemize}
	    \item $\mathcal{X}=\{x_{1},x_{2},x_{3},x_{4}\}$
	    \item $\mathcal{H}=\{h_{(i,j)}:1\leq i<j\leq 4\}$
	    \item $h_{(i,j)}(a_{\ell})=1$ iff $\ell\in\{i,j\}$.
	\end{itemize}
	In words, the three hypotheses with $i$ in the subscript connect to $x_{i}$, and to exactly one of the other $x_{j}$'s. From $\mathcal{H}_{0}$ to $\mathcal{H}_{1}$ to $\mathcal{H}_{2}$, the types realized over $\mathcal{X}$ progress from 2-element subsets, to 1-element subsets, to the empty subset.
	\begin{itemize}
	    \item Analysis of $\mathcal{H}_{0}=\mathcal{H}$: types realized by $\mathcal{H}$ over $\mathcal{X}$ are $\{(A,B):|A|=2\}$.
	    \item Analysis of $\mathcal{H}_{1}$: Additional to the previously realized types,
	    \begin{itemize}
		\item The elements $H_{\ell}=\{h_{(i,j)}: \ell\in\{i,j\}\}$, for $1\leq\ell\leq 4$ were added, and each realizes some type $(A,B)$ with $|A|=1$. If $\ell=k$, then $e(x_{\ell},H_{\ell})=|H_{\ell}|$ and $H_{\ell}(x_{k})=1$. Otherwise,  $e(x_{\ell},H_{\ell})=\frac{1}{3}|H_{\ell}|<\eps|H_{\ell}|$, and $H_{\ell}(a_{k})=0$. Therefore, $H_{\ell}$ realizes $(\{x_{\ell}\},\{x_{i}:i\neq \ell\})$.
		\item No element of $\mathcal{H}_{1}$ realizes $(\emptyset,X)$. Indeed, if some $\eps$-good set $S\subseteq\mathcal{H}_{0}$ of size $m$ realized this type, then $e(S,\mathcal{X})=2m$. By Pigeonhole principle, there is $x\in\mathcal{X}$ such that $e(x,S)\geq \frac{2m}{4}\geq \frac{1}{2}$, and $\mathbf{t}(x,S)=1$ must hold.
	    \end{itemize}
	    \item Analysis of $\mathcal{H}_{2}$: $h=\{H_{1},H_{2},H_{3},H_{4}\}$ was added. Additional to the previously realized types, it realizes $(\emptyset, X)$ because $e(x_{i},h)=\frac{1}{4}|h|<\eps|h|$ for all $1\leq i\leq 4$.
	\end{itemize}

	To summarize this example: we started with a set $X$ of size $4$, our hypotheses corresponded to all $2$-element subsets of $X$; so notice that any $Y \subseteq X$ of size two is shattered. At stage zero, every $h \in \mch$ has degree two. At stage one, we acquire some hypotheses of degree one, and at stage two, we acquire a hypothesis of degree zero. This descent of degree suggests why the $\vc$ and Littlestone dimension don't increment even though we are adding new sets. This becomes quite a bit more general in the next example.

	\subsection{An example where $k$ extension steps are needed before saturation}

	Analogous to the previous example, we can construct finite classes $(X,\mch)$ that need an arbitrary finite number of steps to occur until saturation.
	\begin{lemma}
	    Let $k\in\mathbb{N}$, and $\eps\in(0, \frac{2}{2k-1})_{\mathbb{R}}$. Then, there exists $n\in\mathbb{N}$ and $(X,\mathcal{H}_{0})$ such that $|X|=n$ and $\mathcal{H}_{0}\subsetneq\mathcal{H}_{1}\subsetneq\ldots\subsetneq\mathcal{H}_{k}$.
	\end{lemma}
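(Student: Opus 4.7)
The plan is to generalize the $k=2$ example above: take $X = \{1, \ldots, n\}$ and $\mch_0 = \binom{X}{k}$, with $n$ chosen as a function of $\epsilon$ and $k$. By simultaneous induction on $0 \leq i \leq k$, I would prove the two statements: \emph{(a)} every $(k-i)$-element subset of $X$ lies in $\mch_i$; and \emph{(b)} every element of $\mch_i$, viewed as a subset of $X$, has cardinality at least $k - i$. Combining (a) at stage $i$ with (b) at stage $i-1$ gives the strict inclusion $\mch_{i-1} \subsetneq \mch_i$ for each $i = 1, \ldots, k$.

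For (a), given a $(k-i)$-subset $R$, consider the uniformly weighted set
\[ H_R = \{\,(R \cup \{y\}, \tfrac{1}{n-k+i}) : y \in X \setminus R\,\}, \]
whose members are $(k-i+1)$-subsets (hence in $\mch_{i-1}$ by (a) at stage $i-1$), and which puts total weight $1$ on each $x \in R$ and weight $1/(n-k+i)$ on each $x \notin R$. This weight is $<\epsilon$ as soon as $\epsilon > 1/(n-k+i)$, so $H_R$ is an $\epsilon$-hypothesis representing $R$. For (b), if $H = \{(h_j, \gamma_j)\}$ is a weighted $\epsilon$-good set in $\mch_{i-1}$ representing $f: X \to \{0,1\}$, then using $|h_j^{-1}(1)| \geq k - i + 1$ from (b) at stage $i-1$ and the total-weight identity
\[ k - i + 1 \,\leq\, \sum_j \gamma_j |h_j^{-1}(1)| \,=\, \sum_x \sum_j \gamma_j h_j(x) \,\leq\, |f^{-1}(1)|(1-\epsilon) + \epsilon n, \]
one concludes $|f^{-1}(1)| > k - i - 1$, and hence $|f^{-1}(1)| \geq k - i$ since $|f^{-1}(1)|$ is an integer, provided $\epsilon < 2/(n - k + i + 1)$.

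The main obstacle, and precisely the source of the numerical bound $\epsilon < 2/(2k-1)$ in the hypothesis, is that (a) and (b) pull against each other: the construction side requires $\epsilon > 1/(n-k+1)$ (tightest at $i=1$), while the upper-bound side requires $\epsilon < 2/n$ (tightest at $i = k-1$). These are jointly satisfiable exactly when $n > 2k - 2$, and as $n$ ranges over integers $\geq 2k - 1$, successive intervals $(1/(n-k+1), 2/n)$ overlap (since $2/n > 1/(n-k+2)$ for $n \geq 2k-1$) and their union is $(0, 2/(2k-1))$ up to the countable set of shared endpoints $1/k, 1/(k+1), \ldots$. Thus, given $\epsilon$ in the stated range, one simply picks an appropriate integer $n$ (of order roughly $\max\{1/\epsilon, 2k-1\}$) for which $\epsilon \in (1/(n-k+1), 2/n)$, and the induction delivers the chain $\mch_0 \subsetneq \mch_1 \subsetneq \cdots \subsetneq \mch_k$; the finitely many boundary values of $\epsilon$ not caught by any open interval can be handled by a minor perturbation (e.g.~non-uniform weights in $H_R$).
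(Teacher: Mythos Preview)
Your proposal is correct and follows essentially the same route as the paper: the same construction $X=[n]$, $\mch_0=\binom{[n]}{k}$, the same choice of $n$ in the window $(1/(n-k+1),2/n)\ni\epsilon$, and the same two-part induction (your (a) is the paper's explicit construction of $b_S^{k-\ell}$; your (b) is the paper's Claim, which it proves by a pigeonhole/double-count equivalent to your weighted-sum inequality). Your version is arguably cleaner in that it handles weighted $\epsilon$-hypotheses directly, whereas the paper's Claim is phrased for unweighted sets.

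Two small corrections on the boundary analysis. First, the consecutive intervals $(1/(n-k+1),2/n)$ genuinely overlap for $n\geq 2k$; only the pair at $n=2k-1$ and $n=2k$ are merely adjacent, so the union misses exactly the single point $\epsilon=1/k$, not a countable set. Second, your proposed fix via non-uniform weights in $H_R$ does not work at $\epsilon=1/k$: a short double-count (the same one you use for (b)) shows that \emph{any} weighted $\epsilon$-good set of $k$-subsets representing a $(k-1)$-subset forces $\epsilon>1/(n-k+1)$ strictly. The correct patch is on the (b) side: since the minority bound in the definition of $\epsilon$-hypothesis is strict, your inequality for (b) actually gives $|f^{-1}(1)|>k-i-1$ whenever $\epsilon\leq 2/(n-k+i+1)$ (not just $<$), so $n=2k$ covers $\epsilon=1/k$. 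The paper simply asserts the union equals $(0,2/(2k-1))$ without addressing this.
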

	\begin{proof}
	    First, we can find some $n>2k-2$ such that $\eps\in(\frac{1}{n-k+1},\frac{2}{n})$, because
	    \footnotesize{$$\bigcup \left\{\left(\frac{1}{n-k+1},\frac{2}{n}\right)=\left(0,\frac{2}{2k-1}\right): n>2k-2\right\}.$$}
	    \normalsize
	    Having chosen $n$, we can let $X=\{a_{i}:i\in[n]\}$ and $\mathcal{H}_{0}=\{b^{k}_{S}: \;S\in\binom{[n]}{k}\}$, where
	    $b_{S}^{k}(a_{i})= [[i\in S]]$. The types realized by $\mathcal{H}_{0}$ over $X$ are $\{(A,B):|A|=k\}$. By the choice of $\epsilon$, we have $\mathcal{H}_{1}\supseteq \mathcal{H}_{0}\cup\{b_{S}^{k-1}:\;S\in\binom{[n]}{k-1}\} \text{ , where }b_{S}^{k-1}:=\{b_{S'}^{k}:\;S\subset S'\}$.\\
	    The elements $b_{S}^{k-1}$ are $\eps$-good sets of size $n-k+1$, realizing over $X$ the types $\{(A,B):|A|=k-1\}$, because $\eps>\frac{1}{n-k+1}$ allows for
	    \footnotesize{
	    \[e(a_{i},b_{S}^{k-1})=\begin{cases}n-k+1 & i\in S\\ 1 & i\not\in S\end{cases}\hspace{0.5cm}\Rightarrow\hspace{0.5cm} \mathbf{t}(a_{i},b_{S}^{k-1})=\begin{cases}1 & i\in S\\ 0 & i\not\in S\end{cases}\]}
	    \normalsize
	    Hence, no $b_{S}^{k-1}$ is in $\mathcal{H}_{0}$, implying $\mathcal{H}_{0}\subsetneq\mathcal{H}_{1}$. The proof of $\mathcal{H}_{\ell-1}\subsetneq\mathcal{H}_{\ell}$ is similar for each $2\leq\ell\leq k$, and reveals which types are realized at each step:
	    \begin{itemize}
		\item We can define $b_{S}^{k-\ell}:=\{b_{S'}^{k-\ell+1}: S'\in\binom{n}{k-\ell+1},\;\; S\subset S'\}$ for each $S\in\binom{n}{k-\ell}$, which is an $\eps$-good set of $(n-k+\ell)$ elements in $\mathcal{H}_{\ell-1}$, by $\eps>\frac{1}{n-k+\ell}$ and
		\footnotesize{
		\[e(a_{i},b_{S}^{k-\ell})=\begin{cases}n-k+\ell & i\in S\\ 1 & i\not\in S\end{cases}\hspace{0.5cm}\Rightarrow\hspace{0.5cm} \mathbf{t}(a_{i},b_{S}^{k-\ell})=\begin{cases}1 & i\in S\\ 0 & i\not\in S\end{cases}\]}
		\normalsize
		\item Hence $\mathcal{H}_{\ell}\supseteq\mathcal{H}_{\ell-1}\cup\{b_{S}^{k-\ell}:S\in\binom{n}{k-\ell}\}\supsetneq\mathcal{H}_{\ell-1}$, by the fact that elements in $\mch_{\ell}\setminus\mch_{\ell-1}$ realize $\{(A,B):|A|=k-\ell\}$, and by the claim below.
		\begin{claim}
		For every $1\leq\ell\leq k$, the elements in $\mathcal{H}_{\ell-1}$ only realize types
		$\{(A,B):|A|\geq k-\ell+1\}$ over $X$.
		\end{claim}
		\begin{proof}
		    We use induction over $\ell$. For $\ell=1$, $\mathcal{H}_{0}$ indeed realizes types in $\{(A,B):|A|\geq k\}$, since $\mathcal{H}_{0}=\{b_{S}^{k}:S\in\binom{[n]}{k}\}$ and $e(b_{S}^{k},X)=k$ for all such $S$. For the inductive step, any element of $\mathcal{H}_{\ell-1}$ is of one of three kinds. For each, we argue the type realized is of size $\geq k-\ell+1$.
		    \begin{enumerate}
			\item Elements $b_{S}^{k}, \ldots, b_{S}^{k-\ell+1}$ have $e(b_{S}^{\cdot},X)\geq k,\ldots,  k-\ell+1$, respectively.
			\item Element $b_{S}^{p}\in\mathcal{H}_{p}$ added at step $p\leq \ell-2$, satisfies $e(b_{S}^{p},X)\geq k-\ell+2$, by inductive hypothesis.
			\item An $\eps$-good set $S\subseteq\mathcal{H}_{\ell-2}$ added as $b\in\mathcal{H}_{\ell-1}$. If $|S|=m$, then $e(X,S)\geq (k-\ell+2)m$, since types realized by $\mathcal{H}_{\ell-2}$ are of size $\geq k-\ell+2$. There must exist  $a_{i_{r}}\in X$ for $r\in [k-\ell+1]$, such that $e(a_{i_{r}},S)\geq \frac{2m}{n-k+\ell}>\frac{2}{n}$ for each $r$. Else, from counting in two ways, we get a contradiction
			\small{
			$$(k-\ell+2)m \leq e(X,S)
			    < m(k-\ell) + \frac{2m}{n-k+\ell}\cdot(n-k+\ell).$$}
			\normalsize
			Then, $\eps<2/n$ implies $\mathbf{t}(a_{i_{r}},b)=\mathbf{t}(a_{i_{r}},S)=1$ for all $r\in[k-\ell+1]$.
		    \end{enumerate}
		\end{proof}
	    \end{itemize}
	    This completes our proof that $\mathcal{H}_{0}\subsetneq\ldots\subsetneq\mathcal{H}_{k}$ holds for the constructed $(X,\mathcal{H}_{0})$.
	\end{proof}

	{These examples give a family of Littlestone classes where the saturation construction does not stabilize until level $k$. Note that this is arranged at the cost of higher and higher initial, though obviously still finite, $\vc$-dimension.}

	{Also note that $\vc(\mch_{i})=\ldim(\mch_i)=k$ for all $i<\omega$, and in particular $\vc(\mch_{\infty}(\eps))=\ldim(\mch_{\infty}(\eps))=k$. On one hand, $\vc(\mch_{i})\leq\ldim(\mch_i)\leq k$ can be shown by noticing that all virtual elements realize types of size at most $k$. On the other hand, $\ldim(\mch_{i})\geq\vc(\mch_{i})\geq \vc(\mch_{0})\geq k$, because any} subset
	
	{of $X$ of size $k$ can be shattered by elements $b_{S}^{k}\in\mch_{0}$ as long as $|X|\geq 2k$ (a property of the above constructions, as long as $\eps<1/k$).}
	\sbr

\section{Examples for large $\epsilon$} \label{s:examples2}

In this section we prove by example that for ``large'' epsilon, the saturation of a Littlestone class can even have infinite VC dimension. 

	\subsection{An example where VC-dimension goes to infinity}
	The following claim shows that if $\eps>0$ is large enough, it is possible for VC-dimension to increment at every step of saturation:
	\begin{example}\label{eg:counterex-vcblowsup}
	    Consider the class $(X,\mch)$ where $X:=\mathbb{N}$, $\mch_{0}:=\{b_{S}\subseteq\mathbb{N}:|S|=d\}$, and $b_{S}(i):=[[i\in S]]$ for any $S\subset\mathbb{N}$. Then, $\vc(\mch_{0})=\ldim(\mch_{0})=d$. What's more, if $\eps>\frac{1}{d+1}$, then $\ldim(\mch_{n})\geq \vc(\mch_{n})\geq n+d$ for all $n<\omega$. In particular, $\vc(\mch_{\infty}(\eps))=\omega$.
	\end{example}
	\begin{proof}
	    To show $\vc(\mch_{0})=\ldim(\mch_{0})=d$, all elements of $\mch$ have degree $d$ and infinite codegree, so any set of $d$ elements in $X$ can be shattered. We show by induction on $n<\omega$ that $\mch_{n}\supseteq\{b_{S}\subseteq\mathbb{N}:d\leq |S|\leq n+d\}$, which would imply $\ldim(\mch_{n})\geq\vc(\mch_{n})\geq n+d$. The case $n=0$ holds by construction. Suppose $\mch_{n}\supseteq\{b_{S}\subseteq\mathbb{N}:d\leq|S|\leq n+d\}$ for some $n<\omega$. Then, for all $A\subseteq\mathbb{N}$ with $|A|=n+d+1$, the set
	    \[B_{A}:=\{b_{S}:|S|=n+d,S\subseteq A\}\subseteq\mch_{n}\]
	    is of size $(n+d+1)$, and for all $i\in\mathbb{N}$, either $e(i,B_{A})=0$ when $i\not\in A$, or $e(i,B_{A})=n+d$ when $i\in A$. Since $\eps>\frac{1}{d+1}\geq\frac{1}{n+d}$, then $\trv(i,B_{A})=[[i\in A]]$, is defined, $B_{A}$ is $\eps$-good, and $b_{A}\in\mch_{n+1}$ is added.
	    Thus, $\mch_{n+1}\supseteq\{b_{S}:d\leq|S|\leq (n+d+1)\}$. To show that $\ldim(\mch_{n})\geq \vc(\mch_{n})\geq n+d$, we use the same reasoning as for $\mch_{0}$.
	\end{proof}

	To summarize a key idea of this example: Suppose $X$ is infinite and $\mch$ is the set of (characteristic functions of) $k$-element subsets of $X$. Consider $\{ a_1, \dots, a_{n+1} \} \subseteq X$.
	We would like to realize some pattern of zeros and ones over it, call this pattern $\sigma : \{ a_1, \dots, a_{n+1}\} \rightarrow \{ 0, 1\}$. By assumption on $\mch$, for every $1 \leq i \leq n+1$, there is a function $h_i$ which agrees with $\sigma$ on
	$\{ x_1, \dots, x_{n+1} \} \setminus \{ x_i \}$.
	Consider $H = \{ h_i : 1 \leq i \leq n+1 \}$. (We can choose the $h_i$'s to be distinct, or we can make this a weighted set.) For every $i$, at least $n/(n+1)$ of the functions vote correctly. So if $\eps > 1/n$, the majority vote of $H$ is well defined and agrees with $\sigma$.  In this way we progressively extend the shattering to larger sets.

	\sbr
	\subsection{An example of a good set which is not an excellent set}  \label{example:good-excellent}
	Excellent sets play an important role in the sections on graphs. For more discussion and examples on this see \cite{Malliaris-Moran}.
	Let $\eps=\frac{21}{60}$, and $G=(V,E)$ be the graph shown below, with $V=A\cup B$, $A=\{a_{i}\}_{i\in[6]}$, and $B=\{b_{j}\}_{j\in[6]}$.

	\begin{figure}[ht]
		\centering
		\includegraphics[width=0.4\linewidth]{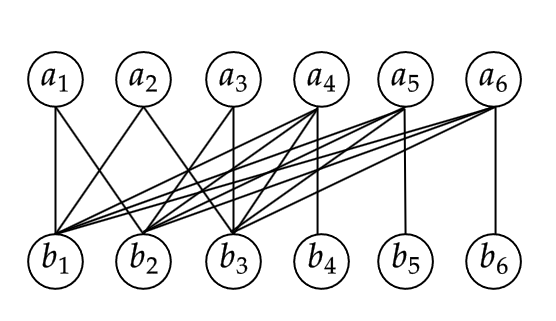}
		\label{fig:good-not-exc}
	\end{figure}
	While $A$ and $B$ are both $\varepsilon$-good, neither is $\varepsilon$-excellent. Indeed, all vertices have 0, 1, or 5 neighbors in $A$, and 0, 2 or 4 neighbors in $B$, and $\{\frac{i}{6}: i\in\{0,\ldots, 6\}\setminus\{3\}\}\subseteq [0,\varepsilon)\cup(1-\varepsilon,1]$.  Moreover, $\mathbf{t}(A,B)$ is not defined because $B$ is $\varepsilon$-good and $|\{a\in A: \mathbf{t}(a,B)=0\}|=|\{a\in A: \mathbf{t}(a,B)=1\}|= \frac{1}{2}\cdot|A|$. Similarly, $\mathbf{t}(B,A)$ is not defined because $A$ is $\varepsilon$-good and $|\{b\in B: \mathbf{t}(b,A)=0\}| =|\{b\in B: \mathbf{t}(b,A)=1\}|= \frac{1}{2}\cdot|B|$.

	\subsection{Extensions} Observe that
	in both cliques $\ref{sa:cliques}$ and matchings $\ref{sa:matchings}$,
	given any two good sets, there is a larger good set containing both of them. This is a property that all bounded degree graphs enjoy:

	\begin{propn}
	    Let $G=(V,E)$ be an arbitrary infinite graph such that every vertex has degree at most $d\in\mathbb{N}$. Then, every finite $X\subseteq V$ with $|X|> \frac{d}{\eps}$ is $\eps$-good. Moreover, for any two $\eps$-good sets $X_{1}$ and $X_{2}$, there exists an $\eps$-good set $X_{3}$ such that $X_{1}\cup X_{2}\subseteq X_{3}$.
	\end{propn}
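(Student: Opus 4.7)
The proof splits cleanly into the two claims in the statement, and the second will follow almost immediately from the first, given that $V$ is infinite.

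For the first assertion, I would simply observe that the bounded degree hypothesis forces the neighborhood of any vertex $v \in V$ inside $X$ to be extremely small relative to $|X|$. Concretely, for any $v \in V$ we have
\[ |\{ b \in X : v \sim b\}| \leq \deg(v) \leq d. \]
If $|X| > d/\epsilon$ then $d < \epsilon |X|$, so the neighborhood of $v$ in $X$ has size strictly less than $\epsilon |X|$. By the definition of $\epsilon$-good, this already witnesses that $X$ is $\epsilon$-good, with $\trv(v,X) = 0$ for every $v \in V$.

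For the second assertion, since $X_1$ and $X_2$ are $\epsilon$-good they are in particular finite, so $X_1 \cup X_2$ is finite as well. Because $V$ is infinite, $V \setminus (X_1 \cup X_2)$ is still infinite, and I can choose a finite $Y \subseteq V \setminus (X_1 \cup X_2)$ with $|Y|$ large enough so that
\[ |X_3| := |(X_1 \cup X_2) \cup Y| > d/\epsilon. \]
By the first part of the proposition applied to $X_3$, the set $X_3$ is $\epsilon$-good and clearly contains $X_1 \cup X_2$.

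There is essentially no obstacle here — the only subtlety to flag is that the conclusion $\trv(v, X_3) = 0$ for every $v$ means that in a bounded degree graph, every sufficiently large $\epsilon$-good set represents the same (constant zero) function, which is consistent with the matching/clique examples of Sections \ref{sa:cliques} and \ref{sa:matchings}: saturation adds at most one new virtual element (the ``everywhere non-neighbor''). So the construction in the second part is robust, and I do not need to do any nontrivial merging argument — infinity of $V$ and the linear-in-$d$ threshold do all the work.
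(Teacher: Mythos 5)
Your proof is correct and takes essentially the same approach as the paper: observe that bounded degree forces the neighborhood side of the partition induced by any $v$ to have size at most $d < \epsilon|X|$, and then for the second claim pad $X_1 \cup X_2$ with extra vertices from the infinite $V$ until the size threshold is met. The closing aside about saturation adding ``at most one new virtual element'' is a reasonable heuristic but is not established by this argument and need not hold in general for bounded-degree graphs (it only controls $\epsilon$-good sets above the $d/\epsilon$ threshold, and the graph saturation uses $\epsilon$-excellent sets of all sizes $>1$), so it is best kept as the informal remark you intended rather than a consequence.
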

	\begin{proof}
	    For the first part, consider such vertex set $X$, and any $v\in V$. Since $\deg(v)\leq d$, $v$ partitions $X$ in an $\eps$-unbalanced way, since the smaller size of this partition is less than or equal than $d<\eps\cdot|X|$. \\

	    \noindent For the second part, consider two $\eps$-good sets $X_{1}$ and $X_{2}$. Infiniteness of $|V|$ implies there is a set $X_{3}\subset V$ containing $X_{1}\cup X_{2}$, possibly with some vertices added in order to have $|X_{3}|>\frac{d}{\eps}$ hold. By the first claim, $X_{3}$ is $\eps$-good.
	\end{proof}

	\sbr
	\subsection{Realizability and the value of $\epsilon$} \label{example12}
	Suppose $\mch$ is a hypothesis class and $r < 1/\epsilon$.
	Observe that any function which is $r$-realized in
	$\mch_1(\epsilon)$ is already realized in $\mch$. [Why?
	By a union bound, see \ref{claim:k-realizability-backtrack} for a
	more general result.]  On the other hand,
	consider the class of singletons over $X = \{ 1, 2, 3, 4, 5\}$,
	$\epsilon = 1/4$, and $r = 5 > 1/\epsilon$. In this example,
	$H = \mch$ (with the counting measure)
	is an $\epsilon$-good set, and $\trv_H$ is the constant
	zero function, which is realized in $\mch_1(\epsilon)$ and even
	$4$-realized in $\mch$, but not realized, indeed not even $5$-realized, in $\mch$.

	\sbr
	\section{Open questions}

	There are some very interesting combinatorial questions arising from this analysis around the regimes of $\epsilon$, 
which don't have obvious analogues in infinitary notinos of saturation. 
	Theorem \ref{thm:littlestone-Cd} draws our attention to the case 
	$1/Cd \geq \epsilon \geq 1/(d+1)$. The first few questions are about different aspects of this gap. 
	In each case we assume $\ldim(\mch) = \ell < \infty$ and hence $\vc(\mch) = d < \infty$, and in 
	each case $\mch_\infty$ means $\mch_\infty(\epsilon)$. The constant $C$ is from \ref{thm:littlestone-Cd}.

	\begin{qst}
    If $\ldim(\mch)<\infty$, can it happen that $\ldim(\mch_{\infty})=\infty$ but $\mch_{\infty}$ still has the property that $\vc(\mch_{\infty})<\infty$?
	\end{qst}

	\begin{qst}
    In general, what happens to $\ldim(\mch_\infty(\eps))$ when $ 1/Cd \leq \eps$?
	\end{qst}

It follows from the definitions that if ever $\mch_n = \mch_{n+1}$ then $\mch_{n+1} =\mch_{m}$ for all $m > n$.  
However, our only example above where $\mch_n \neq \mch_{n+1}$ for all $n<\omega$ is in the case where 
$\vc(\mch_\infty) = \infty$. The next question asks if this can happen in a tamer setting. 
(By the above, sufficiently small values of $\epsilon$ can be ruled out.)

We can also ask the parallel question for graphs $G$.

\begin{qst}
What happens to the Littlestone dimension of $\mcg_\epsilon(\infty)$ when  
$\mcg$ is a Littlestone class and $\epsilon$ is of intermediate range bounded as a function of the 
$\vc$ dimension $d$, per Remark $\ref{int-epsilon-graphs}$?
\end{qst}

\begin{qst}
    Are there any other preservation theorems for the saturation with respect to other combinatorial complexity measures? 
\end{qst}

\br
\bibliographystyle{amsplain}

\end{document}